\theoremstyle{plain}
\newtheorem{theorem}{Theorem}[section]
\newtheorem{proposition}[theorem]{Proposition}
\newtheorem{corollary}[theorem]{Corollary}
\newtheorem{lemma}[theorem]{Lemma}
\theoremstyle{definition}
\newtheorem{remark}[theorem]{Remark}
\newtheorem{definition}[theorem]{Definition}
\newenvironment{example}
{\pushQED{\qed}

\examplex}
{\popQED\endexamplex}
\let\le=\leqslant
\let\ge=\geqslant
\newcommand{\mathdcl}[1]{\textup{#1}}
\newcommand{\eps}{\varepsilon}
\newcommand{\RR}{\mathbb{R}}
\newcommand{\NN}{\mathbb{N}}
\newcommand{\CC}{\mathbb{C}}
\newcommand{\m}{\ensuremath{\text{m}}}
\newcommand{\conj}[1]{\overline{#1}}
\newcommand{\clos}[1]{\overline{#1}}
\newcommand{\form}[1]{{\mathfrak{#1}}}
\DeclareMathOperator{\linspan}{span}
\DeclareMathOperator{\rg}{rg}
\DeclareMathOperator{\sign}{sgn}
\DeclareMathOperator{\graph}{gr}
\newcommand{\fallsto}{\searrow}
\newcommand{\Linop}{\mathcal{L}}
\DeclarePairedDelimiter\norm{\lVert}{\rVert}
\DeclarePairedDelimiter\abs{\lvert}{\rvert}
\newcommand{\scalar}[3][auto]{{%
\ifthenelse{\equal{#1}{auto}}{\left(#2\mkern3mu{\mid}\mkern3mu #3\right)}{}
\ifthenelse{\equal{#1}{b}}{\bigl(#2\mkern3mu{\mid}\mkern3mu #3\bigr)}{}
\ifthenelse{\equal{#1}{B}}{\Bigl(#2\mkern3mu{\bigm|}\mkern3mu #3\Bigr)}{}
}}
\newcommand{\setmid}{:}
\newcommand{\set}[3][auto]{{%
\ifthenelse{\equal{#1}{auto}}{\left\{#2\setmid #3\right\}}{}
\ifthenelse{\equal{#1}{b}}{\bigl\{#2\setmid #3\bigr\}}{}
\ifthenelse{\equal{#1}{B}}{\Bigl\{#2\setmid #3\Bigr\}}{}
}}
\newcommand{\emphdef}[1]{\textbf{\boldmath #1\unboldmath}}
\newcommand{\restrict}[2]{\ensuremath{#1|_{#2}}}
\newcommand{\dx}[1][x]{\,\mathrm{d}#1}
\newcommand{\Ltwo}[1][\Omega]{\ensuremath{L^2(#1)}}
\newcommand{\Honez}[1][\Omega]{\ensuremath{{H^1_0(#1)}}}
\newcommand{\Hone}[1][\Omega]{\ensuremath{{H^1(#1)}}}
\newcommand\my@restoreitemindent{%
\itemindent=\my@saveditemindent
\def\my@restoreitemindent{}}
\newdimen\my@savedparindent
\newenvironment{parenum}[1][]{%
\my@savedparindent=\parindent%
\ifthenelse{\equal{#1}{}}{\asparaenum}{\asparaenum[#1]}%
\edef\my@saveditemindent{\the\itemindent}%
\advance\itemindent-\my@savedparindent
\patchcmd{\@item}{\ignorespaces}{\my@restoreitemindent\ignorespaces}{}{}}
{\endasparaenum}
\newdimen\mycheight
\newdimen\myMheight
\newdimen\mycshift
\DeclareRobustCommand{\Mc}{%
\setbox0=\hbox{c}%
\mycheight=\ht0%
\setbox0=\hbox{M}%
\myMheight=\ht0%
\mycshift=\myMheight%
\advance\mycshift by -\mycheight%
\mbox{M\raisebox{\mycshift}{c}I}}
\DeclareRobustCommand{\McIntosh}{%
{\Mc}ntosh\xspace}
\title[The form method for accretive forms]{A generalisation of the form method for\\ accretive forms and operators}
\author{A.F.M. ter~Elst}
\address{A.F.M. ter~Elst\\Department of Mathematics\\The University of Auckland\\Private bag 92019\\Auckland 1142\\New Zealand}
\email{terelst@math.auckland.ac.nz}
\author[M. Sauter]{Manfred Sauter}
\address{Manfred Sauter\\Institute of Applied Analysis\\Ulm University\\89069 Ulm\\Germany}
\email{manfred.sauter@uni-ulm.de}
\author[H. Vogt]{Hendrik Vogt}
\address{Hendrik Vogt\\Fachbereich Mathematik\\Universit\"at Bremen\\Postfach 33\,04\,40\\28359 Bremen\\Germany}
\email{hendrik.vo\rlap{\textcolor{white}{hugo@egon}}gt@uni-\rlap{\textcolor{white}{darmstadt}}bremen.de}
\keywords{Accretive forms, accretive operators, form methods, operator ranges}
\subjclass[2000]{Primary: 47A07; Secondary: 47B44, 46C07, 35J70}
\begin{document}

\begin{abstract}
The form method as popularised by Lions and Kato is a successful device to associate \m-sectorial operators with suitable elliptic or sectorial forms.
\McIntosh generalised the form method to an accretive setting, thereby allowing to associate \m-accretive operators with suitable accretive forms.
Classically, the form domain is required to be densely embedded into the Hilbert space.
Recently, this requirement was relaxed by Arendt and ter~Elst 
in the setting of elliptic and sectorial forms.

Here we study the prospects of a generalised form method for accretive forms to generate accretive operators.
In particular, we work with the same relaxed condition on the form domain as used by Arendt and ter~Elst.
We give a multitude of examples for many degenerate phenomena that can
occur in the most general setting. We characterise when the associated operator is \m-accretive
and investigate the class of operators that can be generated. 
For the case that the associated operator is \m-accretive, we study form approximation and Ouhabaz type invariance criteria.
\end{abstract}
\maketitle

\section{Introduction}

Lions~\cite[Theorem~3.6]{Lions57} and Kato~\cite[Subsection VI.2.1]{Kato66:1ed} 
introduced two different but basically equivalent formulations to
generate \m-sectorial operators in a Hilbert space $H$ via certain sesquilinear forms. 
Kato's formulation provides a one-to-one correspondence between \m-sectorial operators and
closed sectorial forms.
In Lions' formulation, closed sectorial forms are replaced by continuous sesquilinear forms whose form domain is a Hilbert space embedded in $H$ and that satisfy an ellipticity condition. This has been generalised in two directions. 
\McIntosh~\cite{McIntosh1968:repres} studied accretive forms where the form domain is a Hilbert space embedded in $H$, and his main aim was to associate
an \m-accretive operator with such a form. In the other (recent) generalisation by Arendt and ter~Elst~\cite{AtE12:sect-form}, the form domain is
no longer required to be embedded in the Hilbert space $H$, but a continuous (not necessarily injective) linear map from the form domain into $H$ suffices, together with an ellipticity condition.

The aim of this paper is to give a common generalisation for
both~\cite{McIntosh1968:repres} and~\cite{AtE12:sect-form},
and to study new phenomena that occur in this setting.

We first will give an overview of the generation results mentioned above.
In Section~\ref{sec:acc-op} we collect basic results about accretive operators. 
In Section~\ref{sec:complete-case} we present our new generation theorem.
In Section~\ref{sec:gen-nondense} we give a necessary and sufficient condition in terms of operator ranges for an accretive operator to be associated with a form in the sense of our generation theorem.
In Section~\ref{sec:form-approx} we give a basic form approximation result. 
In Section~\ref{sec:dual-form} we investigate the relationship between the original form and its dual form.
In Section~\ref{sec:mcintosh-cond} we study a suitable sufficient condition for the range condition in the
generation theorem that is adapted from the paper of \McIntosh.
In Section~\ref{sec:invariance} we investigate the invariance of closed, convex sets under the associated semigroup.
Finally, in Section~\ref{sec:incomp} we briefly discuss how our results can be applied in a setting where the form domain is merely a pre-Hilbert space.

Throughout this paper we provide various examples, give implications between many different conditions and highlight fundamental differences to the well-known elliptic theory.

\section{Background of the form method}

Let $V$, $H$ be Hilbert spaces, and let $\form{a}\colon V\times V\to\CC$ be a continuous sesquilinear form.
Recall that $\form{a}$ is continuous if and only if there exists an $M\ge 0$
such that $\abs{\form{a}(u,v)}\le M\norm{u}_V\norm{v}_V$ for all $u,v\in V$.
If $V$ is continuously and densely embedded in $H$, then one defines the graph of an \emphdef{operator $A$ associated with the form $\form{a}$} in $H$ as follows. 
Let $x,f\in H$. Then $x\in D(A)$ and $Ax=f$ if and only if $\form{a}(x,v)=\scalar{f}{v}_H$ for all $v\in V$.
Lions~\cite[Theorem~3.6]{Lions57} proved the following theorem.
\begin{theorem}[Lions]\label{thm:lions} 
Suppose $V$ is continuously and densely embedded in~$H$.
Moreover, suppose that $\form{a}$ is elliptic, i.e., there are $\omega\in\RR$ and $\mu>0$ such that
\[
    \Re \form{a}(u,u) + \omega\norm{u}_H^2\ge \mu\norm{u}_V^2
\]
for all $u\in V$. Then the operator $A$ is \m-sectorial.
\end{theorem}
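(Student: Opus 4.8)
The plan is to reduce to a coercive form by a real shift and then to apply the Lax--Milgram lemma. First I would pass from $\form{a}$ to $\form{a}_\omega := \form{a} + \omega\scalar{\cdot}{\cdot}_H$, whose associated operator is $A+\omega$ and which is \emph{coercive}, meaning $\Re\form{a}_\omega(u,u)\ge\mu\norm{u}_V^2$ for all $u\in V$. Since \m-sectoriality is invariant under real shifts, it is enough to prove that the operator $A_\omega$ associated with $\form{a}_\omega$ is \m-sectorial with vertex at the origin. Along the way the density of $V$ in $H$ guarantees that $A_\omega$ (and hence $A$) is a well-defined single-valued operator, since $f$ is determined by $\set{\scalar{f}{v}_H}{v\in V}$.

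Next I would establish sectoriality from the numerical range. For $x\in D(A_\omega)$ one has $\scalar{A_\omega x}{x}_H=\form{a}_\omega(x,x)$, and coercivity gives $\Re\form{a}_\omega(x,x)\ge\mu\norm{x}_V^2\ge0$, while continuity gives $\abs{\Im\form{a}_\omega(x,x)}\le M\norm{x}_V^2\le\frac{M}{\mu}\Re\form{a}_\omega(x,x)$. Hence the numerical range of $A_\omega$ is contained in the sector $\set{z\in\CC}{\abs{\Im z}\le\frac{M}{\mu}\Re z}$, so $A_\omega$ is sectorial with vertex $0$ and half-angle $\arctan(M/\mu)<\pi/2$; in particular $A_\omega$ is accretive.

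The analytic heart is the range condition. Since $V$ is continuously embedded in $H$, every $f\in H$ induces a continuous functional $v\mapsto\scalar{f}{v}_H$ on $V$. For any $\lambda\in\CC$ with $\Re\lambda\le0$ the shifted form $\form{a}_\omega-\lambda\scalar{\cdot}{\cdot}_H$ is again coercive (its diagonal real part is still bounded below by $\mu\norm{\cdot}_V^2$), so coercivity and continuity let me invoke Lax--Milgram to obtain a unique $u\in V$ with $\form{a}_\omega(u,v)-\lambda\scalar{u}{v}_H=-\scalar{f}{v}_H$ for all $v\in V$, i.e.\ $(\lambda-A_\omega)u=f$. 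The estimate $\mu\norm{u}_V^2\le\Re\scalar{f}{u}_H\le\norm{f}_H\norm{u}_H$ combined with the embedding yields a bounded inverse, so $\lambda\in\rho(A_\omega)$. Thus the whole half-plane $\set{\lambda\in\CC}{\Re\lambda\le0}$ lies in $\rho(A_\omega)$, which together with accretivity gives that $A_\omega$ is \m-accretive, and hence \m-sectorial.

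The one point needing care — the part I expect to be the main obstacle — is the density of $D(A_\omega)$ in $H$, which is required for \m-sectoriality but is not immediate from $D(A_\omega)\subseteq V$. I would obtain it either from the general fact that \m-accretive operators are densely defined, or directly by running the same Lax--Milgram argument for the adjoint form $\form{a}_\omega^*(u,v):=\conj{\form{a}_\omega(v,u)}$, which is coercive with the same constants: if $h\in H$ satisfies $\scalar{x}{h}_H=0$ for all $x\in D(A_\omega)$, then solving $\form{a}_\omega^*(w,v)=\scalar{h}{v}_H$ produces $w\in V$ with $\scalar{A_\omega x}{w}_H=\form{a}_\omega(x,w)=\scalar{x}{h}_H=0$ for all $x\in D(A_\omega)$; surjectivity of $A_\omega$ then forces $w=0$ and hence $h=0$. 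Finally, undoing the shift shows that $A=A_\omega-\omega$ is \m-sectorial, completing the proof.
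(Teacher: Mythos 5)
Your proof is correct, but it takes a genuinely different route from the paper. The paper never proves Theorem~\ref{thm:lions} directly: it quotes it from Lions, and within its own framework it recovers the stronger Theorem~\ref{thm:ate} (of which Lions' theorem is the special case where $j$ is the embedding $V\hookrightarrow H$) from the general generation theorem, see Remark~\ref{rem:gen-complete}.\ref{en:rem-gc-jell}. There, after the same shift to $\omega=0$, the estimate
\[
    \mu\norm{u}_V^2\le\Re\form{a}(u,u)\le\abs{\form{b}(u,u)}\le\norm{Tu}_V\norm{u}_V
\]
shows that the representing operator $T\in\Linop(V)$ of $\form{b}$ is injective with closed range, hence invertible (using $\ker T^*=\ker T$); the range condition $\rg j^*\subset\rg T$ of Theorem~\ref{thm:gen-complete} then holds trivially, giving \m-accretivity, and \m-sectoriality follows by applying the same argument to $e^{i\alpha}\form{a}$ for all small $\abs{\alpha}$ (rotation trick). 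Your Lax--Milgram step is exactly the invertibility of $T$ in disguise --- Lax--Milgram for a continuous coercive form is precisely the statement that its representing operator on $V$ is invertible --- so the analytic core is the same; the differences are packaging. What you do differently: you place the whole closed left half-plane in the resolvent set by re-running Lax--Milgram for each $\lambda$ with $\Re\lambda\le 0$, and you obtain sectoriality from the numerical-range estimate rather than by rotation. What each approach buys: yours is self-contained and elementary, essentially the classical argument; the paper's derivation comes for free from machinery built to handle non-injective $j$ and forms that are merely accretive, a setting where Lax--Milgram is unavailable and the range condition $\rg j^*\subset\rg T$ is the correct substitute. Two cosmetic points in your write-up: in the numerical-range estimate the constant should be the continuity constant of $\form{a}_\omega$ (which involves $M$, $\abs{\omega}$ and the embedding constant), not $M$ itself; and your first fallback for the density of $D(A_\omega)$ --- that \m-accretive operators are densely defined --- is indeed available (the paper records it immediately after Phillips' theorem), so the adjoint-form argument, though correct, is not needed.
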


In~\cite{McIntosh1968:repres} \McIntosh improved Theorem~\ref{thm:lions} to the setting of accretive forms. Recall that $\form{a}$ is called \emphdef{accretive} if 
\[
    \Re \form{a}(u,u)\ge 0
\]
for all $u\in V$.
\begin{theorem}[\McIntosh]\label{thm:mcintosh}
Suppose $V$ is continuously and densely embedded in~$H$. Moreover, suppose $\form{a}$ is accretive and that there exists a $\mu >0$ such that
\begin{equation}\label{eq:mc-cond}
    \sup_{\norm{v}_V\le 1}\abs{\form{a}(u,v) + \scalar{u}{v}_H}\ge \mu\norm{u}_V 
\end{equation}
for all $u\in V$. Then the operator $A$ is \m-accretive.
\end{theorem}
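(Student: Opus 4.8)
The plan is to use the standard characterisation that $A$ is \m-accretive precisely when $A$ is accretive and $\rg(I+A)=H$, and to establish these two properties in turn.

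Accretivity of $A$ is immediate from the definition of the associated operator. If $x\in D(A)$ with $Ax=f$, then $x\in V$ and $\form{a}(x,v)=\scalar{f}{v}_H$ for every $v\in V$; choosing $v=x$ gives $\scalar{Ax}{x}_H=\form{a}(x,x)$, whence $\Re\scalar{Ax}{x}_H=\Re\form{a}(x,x)\ge 0$ by accretivity of $\form{a}$.

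For the range condition I would pass to the shifted form. Put $\form{b}(u,v):=\form{a}(u,v)+\scalar{u}{v}_H$; this is a continuous sesquilinear form on $V$, so the Riesz representation theorem supplies a bounded operator $T\in\Linop(V)$ characterised by $\scalar{Tu}{v}_V=\form{b}(u,v)$ for all $u,v\in V$. Fix $f\in H$. Since $V$ is continuously embedded in $H$, the map $v\mapsto\scalar{f}{v}_H$ is a bounded linear functional on $V$, so there is $g\in V$ with $\scalar{f}{v}_H=\scalar{g}{v}_V$ for all $v\in V$. Now $(I+A)x=f$ holds for some $x\in D(A)$ if and only if $\form{a}(x,v)=\scalar{f-x}{v}_H$ for all $v\in V$, i.e.\ $\form{b}(x,v)=\scalar{f}{v}_H$ for all $v\in V$, i.e.\ $Tx=g$. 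Thus $\rg(I+A)=H$ is equivalent to surjectivity of $T$.

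The crux is to show that $T$ is bijective, and this is where both hypotheses enter. On the one hand, since $\norm{Tu}_V=\sup_{\norm{v}_V\le 1}\abs{\scalar{Tu}{v}_V}=\sup_{\norm{v}_V\le 1}\abs{\form{b}(u,v)}$, condition~\eqref{eq:mc-cond} says exactly that $\norm{Tu}_V\ge\mu\norm{u}_V$, so $T$ is bounded below and hence injective with closed range. On the other hand, to obtain surjectivity it suffices to show that $\rg(T)$ is dense, that is, $\ker(T^*)=\{0\}$. Suppose $T^*w=0$; then $\form{b}(u,w)=0$ for all $u\in V$, and taking $u=w$ yields $0=\Re\form{b}(w,w)=\Re\form{a}(w,w)+\norm{w}_H^2\ge\norm{w}_H^2$, so $w=0$ in $H$ and therefore, by injectivity of the embedding, $w=0$ in $V$. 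Hence $\rg(T)$ is both closed and dense, so $T$ is surjective and the range condition follows. I expect this surjectivity to be the main obstacle: the one-sided estimate~\eqref{eq:mc-cond} by itself delivers only injectivity and closed range, and it is precisely the accretivity of $\form{a}$ that supplies the non-degeneracy of the adjoint needed to rule out $\ker(T^*)\neq\{0\}$.
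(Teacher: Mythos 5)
Your proof is correct and follows essentially the same route by which the paper itself recovers McIntosh's theorem: as in Remark~\ref{rem:gen-complete}.4 combined with the discussion of Condition~\ref{en:ass-three} in Section~\ref{sec:mcintosh-cond}, you reduce everything to invertibility of the operator $T$ representing the shifted form $\form{b}(u,v)=\form{a}(u,v)+\scalar{u}{v}_H$, getting injectivity and closed range of $T$ from~\eqref{eq:mc-cond} and surjectivity from accretivity, and then solve $Tx=j^*f$ to conclude $\rg(I+A)=H$. The only (cosmetic) difference is that you establish $\ker T^*=\{0\}$ by a direct computation using accretivity of $\form{a}$ and injectivity of the embedding, where the paper invokes the general identity $\ker T=\ker T^*$ for \m-accretive operators from Proposition~\ref{prop:macc}; these amount to the same calculation.
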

Clearly, if in Theorem~\ref{thm:lions} the ellipticity condition holds with $\omega=1$, 
then~\eqref{eq:mc-cond} holds with the same value of $\mu$ (but $\form{a}$ does not need to be accretive).
Note that if $\omega\in\RR$ and $\form{a}'\colon V\times V\to\CC$ is given by $\form{a}'(u,v)=\form{a}(u,v)+\omega\scalar{u}{v}_H$,
then the operator $A'$ associated with $\form{a}'$ satisfies $A'=A+\omega I$, thus differs from $A$ only by a shift.
Traditionally the form $\form{a}$ in Theorem~\ref{thm:lions} does not have to be accretive, but in Theorem~\ref{thm:mcintosh} 
the form $\form{a}$ is supposed to be accretive.
One can relax the conditions in Theorem~\ref{thm:mcintosh} by introducing a shift and replacing $\form{a}(u,v)+\scalar{u}{v}_H$ by $\form{a}(u,v)+\omega'\scalar{u}{v}_H$,
but this essentially does not change the content.
In order to avoid introducing such a shift, in the following sections we assume that $\form{a}$ is already accretive.

Finally we formulate a recent generalisation of Theorem~\ref{thm:lions} where the Hilbert space $V$ does not have to be embedded in the Hilbert space~$H$.
\begin{theorem}[Arendt and ter~Elst~{\cite[Theorem~2.1]{AtE12:sect-form}}]\label{thm:ate}
Let $j\colon V\to H$ be a continuous linear map with dense range. 
Suppose $\form{a}$ is $j$-elliptic, i.e., there exist $\omega\in\RR$ and $\mu>0$ such that
\[
    \Re \form{a}(u,u)+\omega\norm{j(u)}_H^2\ge \mu\norm{u}_V^2
\]
for all $u\in V$. Define the graph of an operator $A$ as follows.
If $x,f\in H$, then $x\in D(A)$ and $Ax=f$ if and only if there exists a $u\in V$
such that $j(u)=x$ and $\form{a}(u,v)=\scalar{f}{j(v)}_H$ for all $v\in V$.
Then $A$ is well-defined and \m-sectorial.
\end{theorem}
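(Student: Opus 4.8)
The plan is to reduce to the coercive case and then combine a Lax--Milgram argument with the accretivity of $\form{a}$. Replacing $\form{a}$ by the form $(u,v)\mapsto\form{a}(u,v)+\omega\scalar{j(u)}{j(v)}_H$ shifts the associated operator by $\omega I$ and preserves \m-sectoriality, so I may assume $\omega=0$, i.e.\ $\Re\form{a}(u,u)\ge\mu\norm{u}_V^2$ for all $u\in V$. I would then introduce the bounded operator $\Linop\colon V\to V$ given by $\scalar{\Linop u}{v}_V=\form{a}(u,v)$; coercivity together with the Lax--Milgram theorem shows that $\Linop$ is an isomorphism of $V$. Writing $j^*\colon H\to V$ for the Hilbert space adjoint of $j$, the variational identity $\form{a}(u,v)=\scalar{f}{j(v)}_H$ for all $v\in V$ becomes $\Linop u=j^*f$, and this reformulation is the engine of the whole argument.

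For well-definedness, suppose $j(u_1)=j(u_2)=x$ and $\form{a}(u_i,v)=\scalar{f_i}{j(v)}_H$ for $i\in\{1,2\}$, and set $w:=u_1-u_2$. Then $j(w)=0$ and $\form{a}(w,v)=\scalar{f_1-f_2}{j(v)}_H$; testing with $v=w$ gives $\form{a}(w,w)=\scalar{f_1-f_2}{j(w)}_H=0$, so coercivity forces $w=0$ and hence $\scalar{f_1-f_2}{j(v)}_H=0$ for all $v$. Since $j$ has dense range, $f_1=f_2$ (this is the only place the dense range of $j$ enters). For sectoriality, I note that for $x=j(u)\in D(A)$ with $Ax=f$ the choice $v=u$ yields $\scalar{Ax}{x}_H=\form{a}(u,u)$; continuity and coercivity of $\form{a}$ then confine the numerical range of $A$ to the sector $\{z:\abs{\Im z}\le(M/\mu)\Re z\}$, so $A$ is accretive and sectorial. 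Given $f\in H$, solving $\Linop u=j^*f$ and putting $x=j(u)$ produces $x\in D(A)$ with $Ax=f$, so $A$ is surjective; the coercivity estimate $\mu\norm{u}_V^2\le\Re\scalar{f}{j(u)}_H$ bounds $\norm{x}_H$ by a constant times $\norm{f}_H$, giving $0\in\rho(A)$ with $A^{-1}=j\Linop^{-1}j^*$. Applying the same reasoning to the coercive form $(u,v)\mapsto\form{a}(u,v)+s\scalar{j(u)}{j(v)}_H$ for $s>0$ shows that $A+sI$ is invertible as well.

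The delicate new point, and the one I expect to be the main obstacle, is density of $D(A)$ in $H$: when $j$ fails to be injective the classical argument (that $D(A)$ contains a dense image of $V$) is unavailable. I would instead deduce it directly from accretivity. Let $g\perp D(A)$ and put $y:=A^{-1}g\in D(A)$, with $w\in V$ satisfying $j(w)=y$ and $\form{a}(w,v)=\scalar{g}{j(v)}_H$. Testing with $v=w$ gives $\form{a}(w,w)=\scalar{g}{y}_H$, which vanishes because $y\in D(A)$ and $g\perp D(A)$; coercivity then forces $w=0$, whence $y=j(w)=0$ and $g=Ay=0$, so $D(A)$ is dense. Collecting the pieces, $A$ is densely defined, accretive with $A+sI$ invertible for every $s>0$, hence \m-accretive, and its numerical range lies in a sector, so $A$ is \m-sectorial; undoing the shift by $\omega$ completes the proof.
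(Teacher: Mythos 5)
Your proof is correct, but it takes a genuinely different route from the paper's. The paper proves Theorem~\ref{thm:ate} in Remark~\ref{rem:gen-complete}.\ref{en:rem-gc-jell} as a corollary of its general machinery: after the same shift to $\omega=0$, it shows that the operator $T$ representing the perturbed form $\form{b}(u,v)=\form{a}(u,v)+\scalar{j(u)}{j(v)}_H$ is invertible (coercivity gives injectivity and closed range, and $\ker T^*=\ker T=\{0\}$ gives dense range), notes that $\form{a}(u,u)=0$ forces $u=0$ so that $D_j(\form{a})\cap\ker j=\{0\}$, and then invokes Proposition~\ref{prop:gen-welldef} and the range condition $\rg j^*\subset\rg T=V$ of Theorem~\ref{thm:gen-complete} to get \m-accretivity; \m-sectoriality is then obtained by a rotation trick, applying the same reasoning to $e^{i\alpha}\form{a}$ for small $\abs{\alpha}$. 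You instead work with the operator $\Linop$ representing $\form{a}$ itself (the paper's $T_0$ after the shift), prove its invertibility by Lax--Milgram, and build the resolvents by hand: $A^{-1}=j\Linop^{-1}j^*$, with $A+sI$ handled by rerunning the argument on the shifted coercive forms. You also get sectoriality from the explicit numerical-range bound $\abs{\Im\scalar{Ax}{x}_H}\le(M/\mu)\Re\scalar{Ax}{x}_H$ rather than by rotation, and you prove density of $D(A)$ directly --- a step that is superfluous under the paper's definition of \m-accretivity (accretive and $I+A$ surjective), since density then follows from Phillips' theorem; note also that your density argument still uses the dense range of $j$ implicitly, through the single-valuedness of $A$ that it requires. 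The engine is the same in both proofs (coercivity makes a representing operator invertible, and solving $Tu=j^*f$, respectively $\Linop u=j^*f$, produces the resolvent), but yours is self-contained and quantitative, whereas the paper's is deliberately structured as a reduction exhibiting Theorem~\ref{thm:ate} as a special case of its new generation theorem, so that well-definedness, surjectivity and density come for free from Proposition~\ref{prop:gen-welldef}, Theorem~\ref{thm:gen-complete} and Phillips' theorem.
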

In Section~\ref{sec:complete-case} we present a generation theorem which generalises both Theorem~\ref{thm:mcintosh} and~\ref{thm:ate}.

\section{Basic properties of accretive operators}\label{sec:acc-op}

In this section we collect basic results about linear accretive operators for the reader's convenience. 
Most of the results are standard and can be found in a more general setting in the literature, see for example~\cite{Phi69} and~\cite[Chapter~3]{HP97}.

\begin{definition}
Let $A$ be an operator in a Hilbert space $H$ with domain $D(A)$. We say that $A$ is \emphdef{accretive} if $\Re\scalar{Ax}{x}\ge 0$ for all $x\in D(A)$.
If $A$ is accretive and $(I+A)$ is surjective, we say that $A$ is \emphdef{\m-accretive}.
The operator $A$ is called \emphdef{maximal accretive} if for every accretive operator $B$ with $A\subset B$ it follows that $A=B$.
\end{definition}

Our main motivation to study \m-accretive operators is the following well-known theorem.
It highlights the usefulness of \m-accretive operators for the study of evolution equations.
\begin{theorem}[Phillips~{\cite[Theorem~1.1.3]{Phi59}}]
Let $A$ be a linear operator in a Hilbert space~$H$. 
Then $A$ is \m-accretive if and only if $-A$ is the generator of a $C_0$-semigroup of contraction operators on~$H$.
\end{theorem}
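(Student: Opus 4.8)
This is a form of the Lumer--Phillips theorem, and the plan is to reduce both implications to the Hille--Yosida generation theorem for contraction semigroups, whose hypotheses are that the generator $B$ be densely defined and closed, that $(0,\infty)$ be contained in its resolvent set, and that $\norm{(\lambda I - B)^{-1}}\le\lambda^{-1}$ for all $\lambda>0$. The computational workhorse throughout is the elementary accretivity estimate: if $A$ is accretive, then for every $\lambda>0$ and $x\in D(A)$,
\[
    \norm{(\lambda I + A)x}^2 = \lambda^2\norm{x}^2 + 2\lambda\Re\scalar{Ax}{x} + \norm{Ax}^2 \ge \lambda^2\norm{x}^2,
\]
so that $\norm{(\lambda I + A)x}\ge\lambda\norm{x}$; in particular $\lambda I + A$ is injective and its inverse is bounded by $\lambda^{-1}$ on its range.

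For the direction ``$\Rightarrow$'', suppose $A$ is \m-accretive. Then $I+A$ is surjective, and by the estimate with $\lambda=1$ it is injective with bounded inverse, so $(I+A)^{-1}$ is everywhere defined and bounded and hence $A$ is closed. Next I would upgrade surjectivity from $\lambda=1$ to all $\lambda>0$ by a Neumann-series continuation: whenever $\lambda_0 I+A$ is bijective, the factorisation $\lambda I + A = (\lambda_0 I + A)\bigl(I + (\lambda-\lambda_0)(\lambda_0 I + A)^{-1}\bigr)$ shows that $\lambda I+A$ is bijective for all $\lambda$ with $\abs{\lambda-\lambda_0}<\lambda_0$; thus the set of admissible $\lambda$ is open, and the uniform resolvent bound makes it closed in $(0,\infty)$, so it is all of $(0,\infty)$. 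For the density of $D(A)$ I would argue directly: if $y\perp D(A)$, choose $x\in D(A)$ with $(I+A)x=y$; then $0=\scalar{y}{x}=\scalar{(I+A)x}{x}$ forces $0=\Re\scalar{(I+A)x}{x}\ge\norm{x}^2$, whence $x=0$ and $y=0$. With $B:=-A$ now known to be closed, densely defined, and to satisfy $(0,\infty)\subset\rho(B)$ together with the resolvent estimate, Hille--Yosida yields that $-A$ generates a $C_0$-semigroup of contractions.

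For the direction ``$\Leftarrow$'', let $-A$ generate the contraction semigroup $(T_t)_{t\ge 0}$. For $x\in D(A)$ the function $t\mapsto\norm{T_t x}^2$ is non-increasing (by contractivity) and right-differentiable at $0$ with derivative $-2\Re\scalar{Ax}{x}$, which is therefore $\le 0$; hence $A$ is accretive. Moreover $1$ lies in the resolvent set of the generator, via the Laplace-transform representation $(I+A)^{-1}=\int_0^\infty e^{-t}T_t\dx[t]$, so $I+A$ is surjective and $A$ is \m-accretive.

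The only genuinely non-routine steps are the propagation of surjectivity of $\lambda I + A$ from $\lambda=1$ to all $\lambda>0$ and the automatic density of $D(A)$; both are special features of the accretive setting in a Hilbert space, and once they are in hand everything reduces to a direct invocation of Hille--Yosida. I expect the density argument to be the cleanest part and the careful bookkeeping in the resolvent continuation to be the main place where one must be precise.
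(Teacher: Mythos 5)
You give a correct proof, but note that the paper itself contains no proof of this statement: it is quoted as Phillips' theorem with a citation to \cite[Theorem~1.1.3]{Phi59}, so there is no internal argument to compare against. Your route is the standard Hilbert-space case of the Lumer--Phillips theorem, reduced to Hille--Yosida, and all the essential steps are in order: the estimate $\norm{(\lambda I + A)x}\ge\lambda\norm{x}$; closedness of $A$ from the everywhere-defined bounded inverse $(I+A)^{-1}$; and the Neumann-series continuation of surjectivity from $\lambda=1$ to all of $(0,\infty)$ (your factorisation, with the bounded factor on the right, really is an identity of operators with domain exactly $D(A)$, since applying $I+(\lambda-\lambda_0)(\lambda_0 I+A)^{-1}$ changes a vector by an element of $D(A)$, so bijectivity does follow). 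The step that most deserves emphasis is the density of $D(A)$: the paper's definition of \m-accretivity does \emph{not} include dense definedness --- indeed, the paper records density as a \emph{consequence} of this very theorem --- so a proof that silently assumed $D(A)$ dense in order to invoke Hille--Yosida would have a genuine gap; your orthogonality argument ($y\perp D(A)$, $y=(I+A)x$, hence $0=\Re\scalar{(I+A)x}{x}\ge\norm{x}^2$) closes it correctly. The converse direction (accretivity from monotonicity of $t\mapsto\norm{T_tx}^2$, surjectivity of $I+A$ from the Laplace-transform representation of the resolvent) is likewise standard and sound. For contrast, Phillips' own development --- the part of which this paper actually uses, in Section~\ref{sec:gen-nondense} --- is organised around the Cayley transform $J=(I-A)(I+A)^{-1}$ linking accretive operators to contractions; your argument trades that structural correspondence for a direct appeal to the Hille--Yosida generation theorem, which is self-contained modulo that one black box.
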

In particular, every \m-accretive operator $A$ is densely defined and satisfies $(0,\infty)\subset\rho(-A)$.

\begin{lemma}\label{lem:acc-clrg}
Let $A$ be an accretive operator. Then $\rg(I+A)$ is closed if and only if $A$ is closed.
\end{lemma}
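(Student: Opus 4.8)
The plan is to prove the two implications separately, exploiting the accretivity estimate
$\Re\scalar{Ax}{x}\ge 0$ to control the norm of $(I+A)x$ from below.

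First I would establish the fundamental inequality $\norm{(I+A)x}\ge\norm{x}$ for all $x\in D(A)$.
This follows from expanding
\[
\norm{(I+A)x}^2 = \norm{x}^2 + 2\Re\scalar{Ax}{x} + \norm{Ax}^2 \ge \norm{x}^2,
\]
using accretivity for the middle term. In particular $I+A$ is injective, so it has a well-defined inverse on its range $\rg(I+A)$, and the inequality shows that $(I+A)^{-1}$ is a contraction, hence bounded.

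For the direction ``$\rg(I+A)$ closed $\implies$ $A$ closed'': since $(I+A)^{-1}\colon\rg(I+A)\to D(A)$ is a bounded operator defined on the closed (hence complete) space $\rg(I+A)$, it is a closed operator. I would then argue that the graph of $I+A$ is the image of the graph of $(I+A)^{-1}$ under the coordinate flip $(y,x)\mapsto(x,y)$, so the graph of $I+A$ is closed, whence $I+A$ is a closed operator and therefore so is $A=(I+A)-I$ (shifting by the bounded operator $I$ preserves closedness).

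For the converse ``$A$ closed $\implies$ $\rg(I+A)$ closed'': suppose $A$ is closed and let $y_n=(I+A)x_n$ with $y_n\to y$ in $H$. Applying the fundamental inequality to the differences gives
\[
\norm{x_n-x_m}\le\norm{(I+A)(x_n-x_m)}=\norm{y_n-y_m},
\]
so $(x_n)$ is Cauchy and converges to some $x\in H$. Then $Ax_n=y_n-x_n\to y-x$, and since $A$ is closed, $x\in D(A)$ with $Ax=y-x$, i.e.\ $(I+A)x=y\in\rg(I+A)$. Thus $\rg(I+A)$ is closed.

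The main obstacle is largely bookkeeping rather than conceptual: one must be careful that the coercive estimate $\norm{(I+A)x}\ge\norm{x}$ is the single engine driving both directions, giving injectivity and boundedness of the inverse on one side and Cauchyness of preimages on the other. I expect the forward direction to require slightly more care in phrasing the closed-graph argument correctly, but both halves are standard consequences of the contraction estimate.
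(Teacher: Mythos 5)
Your proof is correct: the estimate $\norm{(I+A)x}^2=\norm{x}^2+2\Re\scalar{Ax}{x}+\norm{Ax}^2\ge\norm{x}^2$ does indeed drive both directions, the flipped-graph argument for ``$\rg(I+A)$ closed $\Rightarrow A$ closed'' is sound (closedness of the range is exactly what makes the graph of the contraction $(I+A)^{-1}$ closed), and the Cauchy-sequence argument for the converse is the standard one. The paper itself states this lemma without proof, referring to the literature (Phillips, Hu--Papageorgiou) for such standard facts about accretive operators, and your argument is precisely that standard proof.
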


\begin{proposition}\label{prop:macc-maxacc}
Let $A$ be an operator in~$H$. Then $A$ is \m-accretive if and only if $A$ is closed and maximal accretive.
\end{proposition}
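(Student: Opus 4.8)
The plan is to prove the two implications of the equivalence separately, using the characterisation of \m-accretivity in terms of surjectivity of $I+A$ and Lemma~\ref{lem:acc-clrg} as the main tools.

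\smallskip
First I would show that if $A$ is \m-accretive, then $A$ is closed and maximal accretive. Since $A$ is \m-accretive, $I+A$ is surjective; moreover, accretivity gives $\norm{(I+A)x}_H\ge\norm{x}_H$ for all $x\in D(A)$, so $I+A$ is injective with a bounded inverse, and in particular $\rg(I+A)=H$ is closed. By Lemma~\ref{lem:acc-clrg} this forces $A$ to be closed. For maximality, suppose $B$ is accretive with $A\subset B$. Take $x\in D(B)$ and put $f=(I+B)x$. By surjectivity of $I+A$ there is a $y\in D(A)\subset D(B)$ with $(I+A)y=f$, hence $(I+B)y=f$ as well (since $B$ extends $A$). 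Then $(I+B)(x-y)=0$, and accretivity of $B$ yields $\norm{x-y}_H\le\norm{(I+B)(x-y)}_H=0$, so $x=y\in D(A)$. Thus $D(B)\subset D(A)$ and $A=B$.

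\smallskip
For the converse, suppose $A$ is closed and maximal accretive; I must produce surjectivity of $I+A$. By Lemma~\ref{lem:acc-clrg}, closedness of $A$ gives that $\rg(I+A)$ is closed, so it suffices to show that $\rg(I+A)$ is dense, or equivalently that $\rg(I+A)^\perp=\{0\}$. The key step is to use maximality to rule out any proper obstruction: if $\rg(I+A)\ne H$, then one can construct a strictly larger accretive extension of $A$, contradicting maximality. Concretely, if $f\in\rg(I+A)^\perp$ with $f\ne 0$, one would define an extension $B$ of $A$ by adjoining $f$ to the domain in a suitable way (for instance setting $B(x+\lambda f)=Ax+\lambda(f-f)=Ax-\lambda f$ on $D(A)\oplus\linspan\{f\}$, chosen so that the extra vector is mapped to make $B$ accretive while $f\notin\rg(I+A)$ guarantees $f\notin D(A)$). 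One then checks that $B$ is a proper accretive extension.

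\smallskip
The hard part will be the converse, specifically constructing the accretive extension and verifying its accretivity on the enlarged domain. The natural candidate is to exploit that $f\perp\rg(I+A)$ means $\scalar{(I+A)x}{f}_H=0$ for all $x\in D(A)$, so that $\scalar{Ax}{f}_H=-\scalar{x}{f}_H$; this orthogonality relation is exactly what one needs to control the cross terms $\Re\scalar{B(x+\lambda f)}{x+\lambda f}_H$ when checking accretivity of the extension, and care is needed to choose the value $Bf$ so that these cross terms are nonnegative while keeping $f\notin D(A)$ (which holds automatically since $f\in\rg(I+A)^\perp$ and a nonzero such $f$ cannot lie in $D(A)$ by the accretivity estimate $\norm{(I+A)f}_H\ge\norm{f}_H$). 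Once accretivity of the proper extension is established, maximality is contradicted, forcing $\rg(I+A)$ to be dense and hence all of $H$, so $A$ is \m-accretive.
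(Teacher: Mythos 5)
Your first implication is correct and standard: \m-accretivity gives $\rg(I+A)=H$, which is closed, so $A$ is closed by Lemma~\ref{lem:acc-clrg}, and the estimate $\norm{(I+A)x}_H\ge\norm{x}_H$ yields maximality exactly as you argue. The converse also follows a viable strategy (closedness of $\rg(I+A)$ via Lemma~\ref{lem:acc-clrg}, then density by contradicting maximality), but your concrete construction of the extension fails: you propose $Bf=-f$ (the formula $Ax+\lambda(f-f)=Ax-\lambda f$ is in any case internally inconsistent), and this $B$ is not accretive --- already for $x=0$ one gets
\[
    \Re\scalar{B(\lambda f)}{\lambda f}_H=-\abs{\lambda}^2\norm{f}_H^2<0.
\]
The sign must be reversed: set $Bf=+f$, i.e.\ $B(x+\lambda f)=Ax+\lambda f$ on $D(A)\oplus\linspan\{f\}$. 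Then, writing $z=\conj{\lambda}\scalar{x}{f}_H$ and using the relation $\scalar{Ax}{f}_H=-\scalar{x}{f}_H$ coming from $f\perp\rg(I+A)$, the cross terms satisfy
\[
    \conj{\lambda}\scalar{Ax}{f}_H+\lambda\scalar{f}{x}_H = -z+\conj{z},
\]
which is purely imaginary, so $\Re\scalar{B(x+\lambda f)}{x+\lambda f}_H=\Re\scalar{Ax}{x}_H+\abs{\lambda}^2\norm{f}_H^2\ge 0$ and $B$ is a proper accretive extension, contradicting maximality. A smaller slip: your reason why $f\notin D(A)$ cites the estimate $\norm{(I+A)f}_H\ge\norm{f}_H$, which by itself does not conflict with $f\perp(I+A)f$; the correct argument is that $f\in D(A)\cap\rg(I+A)^\perp$ would force $0=\Re\scalar{(I+A)f}{f}_H\ge\norm{f}_H^2$, hence $f=0$.

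With the sign repaired your proof is complete, and it is genuinely different from the paper's. The paper never proves this proposition directly in Section~\ref{sec:acc-op}; it observes after Proposition~\ref{prop:phillips} that the statement follows by combining Corollary~\ref{cor:gen-closed} (every closed accretive operator is generated by an accretive form, via operator ranges) with Proposition~\ref{prop:phillips} (for form-generated operators, maximal accretive is equivalent to \m-accretive), the latter resting on Phillips' Cayley transform and the Fillmore--Williams theory of operator ranges. Your corrected argument is the classical elementary one, self-contained apart from Lemma~\ref{lem:acc-clrg}. It is also worth noting where closedness enters: the extension construction needs no closedness at all, so it shows that \emph{every} maximal accretive operator has dense $\rg(I+A)$; closedness is used only in the final step to upgrade dense to all of $H$. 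This localises exactly how Phillips' nonclosed maximal accretive, non-\m-accretive operator escapes the argument.
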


\begin{corollary}\label{cor:bounded-macc}
If $A\in\Linop(H)$ is accretive, then $A$ is \m-accretive.
\end{corollary}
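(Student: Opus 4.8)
The statement is that an everywhere-defined bounded accretive operator is automatically \m-accretive. Since accretivity is already part of the hypothesis, the only thing left to establish is the surjectivity of $I+A$. The cleanest route is to invoke Proposition~\ref{prop:macc-maxacc}, which reduces \m-accretivity to the two properties of being closed and being maximal accretive; so the plan is to verify these two properties for $A$.

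First I would note that any $A\in\Linop(H)$ is closed and has full domain $D(A)=H$. For maximal accretivity, I would take an arbitrary accretive operator $B$ with $A\subset B$ and argue that there is simply no room to extend: the inclusions $H=D(A)\subseteq D(B)\subseteq H$ force $D(B)=H=D(A)$, whence $B=A$. Thus $A$ is maximal accretive, and Proposition~\ref{prop:macc-maxacc} immediately yields that $A$ is \m-accretive. I do not expect any real obstacle here; the entire content is the elementary observation that an everywhere-defined operator admits no proper extension at all, let alone a proper accretive one.

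As an alternative (and as a sanity check that avoids the maximality characterisation), one can verify surjectivity of $I+A$ directly. From accretivity and the Cauchy--Schwarz inequality one obtains $\norm{(I+A)x}\,\norm{x}\ge\Re\scalar{(I+A)x}{x}\ge\norm{x}^2$, so $I+A$ is bounded below, hence injective with closed range. Since $\scalar{A^*x}{x}=\conj{\scalar{Ax}{x}}$, the adjoint $A^*$ is accretive as well, and the same estimate shows $I+A^*$ is injective; this makes $\rg(I+A)$ dense, and being also closed it must equal $H$. The only mildly non-trivial point in this second route is the standard passage from \emph{bounded below} (for $I+A$) together with \emph{injectivity of the adjoint} to full surjectivity, but for bounded operators on a Hilbert space this is routine. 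Either way the surjectivity of $I+A$ follows, completing the argument.
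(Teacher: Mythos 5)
Your primary argument is exactly the paper's intended derivation: the corollary is stated immediately after Proposition~\ref{prop:macc-maxacc} precisely because a bounded everywhere-defined operator is closed and admits no proper extension whatsoever, hence is trivially maximal accretive. Both that route and your alternative direct verification of surjectivity of $I+A$ are correct, so there is nothing to fix.
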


\begin{proposition}\label{prop:macc}
Let $A$ be an \m-accretive operator. Then we have the following.
\begin{enumerate}[\upshape (a)]
\labelformat{enumi}{\textup{(#1)}}
\item $A^*$ is \m-accretive.
\item\label{en:ker-macc} $\ker A = \ker A^*$.
\end{enumerate}
\end{proposition}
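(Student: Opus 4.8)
The plan is to derive both parts from resolvent bounds together with a short variational argument, using at the outset that $A$ is closed (Proposition~\ref{prop:macc-maxacc}) and densely defined (as recorded after the theorem of Phillips), so that $A^*$ is a densely defined closed operator with $A^{**}=A$.

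For part~(a) the idea is to transfer the resolvent bounds of $A$ to $A^*$. First I note that for every $\lambda>0$ the operator $\lambda I+A$ is boundedly invertible, since $(0,\infty)\subset\rho(-A)$, and that accretivity gives $\Re\scalar{(\lambda I+A)x}{x}\ge\lambda\norm{x}^2$, hence $\norm{(\lambda I+A)x}\ge\lambda\norm{x}$ for $x\in D(A)$; thus $\norm{(\lambda I+A)^{-1}}\le 1/\lambda$. Taking adjoints, $(\lambda I+A)^*=\lambda I+A^*$ is bijective with everywhere-defined bounded inverse $((\lambda I+A)^{-1})^*$, so also $\norm{(\lambda I+A^*)^{-1}}\le 1/\lambda$, and in particular $I+A^*$ is surjective. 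To recover accretivity of $A^*$ I would, for $y\in D(A^*)$, start from $\norm{(\lambda I+A^*)y}\ge\lambda\norm{y}$, expand the square to obtain $2\lambda\Re\scalar{A^*y}{y}+\norm{A^*y}^2\ge 0$, and let $\lambda\to\infty$ to conclude $\Re\scalar{A^*y}{y}\ge 0$. Together with the surjectivity of $I+A^*$ this shows $A^*$ is \m-accretive. The main obstacle here, and the reason one cannot argue purely algebraically, is precisely that accretivity of $A^*$ is \emph{not} a formal consequence of accretivity of $A$ alone; it is the uniform resolvent bound, which encodes the full strength of \m-accretivity, that has to be passed through the adjoint.

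For part~(b) I would use that, by~(a), both $A$ and $A^*$ are accretive, and prove $\ker A\subset\ker A^*$; the reverse inclusion then follows by applying the same argument to the \m-accretive operator $A^*$ and invoking $A^{**}=A$. So let $x\in\ker A$. For arbitrary $z\in D(A)$ and real $s$, accretivity applied to $x+sz\in D(A)$ together with $A(x+sz)=sAz$ yields $s\Re\scalar{Az}{x}+s^2\Re\scalar{Az}{z}\ge 0$; letting $s\to 0$ through positive and negative values forces $\Re\scalar{Az}{x}=0$. Replacing $z$ by $iz$ kills the imaginary part as well, so that $\scalar{Az}{x}=0$ for all $z\in D(A)$. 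By the definition of the adjoint this means precisely $x\in D(A^*)$ with $A^*x=0$, i.e.\ $x\in\ker A^*$. I expect the only delicate point to be bookkeeping the complex inner product correctly in the variational step; everything else is routine.
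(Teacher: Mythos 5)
Your proof is correct. There is, however, no proof in the paper to compare it with: Proposition~\ref{prop:macc} is collected in Section~\ref{sec:acc-op} as a standard fact, with the reader referred to~\cite{Phi69} and~\cite[Chapter~3]{HP97}. Your argument is a sound, self-contained replacement, and both halves are rigorous: in~(a), the passage from $\norm{(\lambda I+A)^{-1}}\le 1/\lambda$ to the same bound for $\lambda I+A^*$ is the standard adjoint-of-an-inverse identity, and expanding $\norm{(\lambda I+A^*)y}^2\ge\lambda^2\norm{y}^2$ and letting $\lambda\to\infty$ correctly recovers $\Re\scalar{A^*y}{y}\ge 0$ (this is the Hilbert-space version of the Lumer--Phillips characterisation of accretivity via resolvent growth, and you are right that this analytic input cannot be avoided); in~(b), the variational step yields $\scalar{Az}{x}=0$ for all $z\in D(A)$, which by the definition of the adjoint gives $x\in\ker A^*$, and the reverse inclusion legitimately invokes~(a) together with $A^{**}=A$, valid since an \m-accretive operator is densely defined and closed. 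For comparison, the route in the cited literature is via duality of contraction semigroups: $-A$ generates a contraction semigroup by Phillips' theorem~\cite{Phi59}, its adjoint is again a contraction semigroup with generator $-A^*$, which gives~(a); and~(b) follows from the observation that a Hilbert-space contraction and its adjoint have the same fixed points, applied to $J=(I+A)^{-1}$, whose fixed-point set is $\ker A$ while that of $J^*=(I+A^*)^{-1}$ is $\ker A^*$. That route is shorter once the generation theorem is in hand; yours has the advantage of using nothing beyond elementary facts about adjoints and resolvents.
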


\begin{lemma}\label{lem:acc-ddcl}
Let $A$ be a densely defined, accretive operator. Then $A$ is closable and its closure $\clos{A}$ is accretive.
\end{lemma}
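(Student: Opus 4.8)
The plan is to verify closability directly from the definition of closure and then transfer accretivity to the closure by a continuity argument.

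First, to establish that $A$ is closable, I would take a sequence $(x_n)$ in $D(A)$ with $x_n\to 0$ and $Ax_n\to y$ in $H$, and aim to show $y=0$. The key idea is to test accretivity not at $x_n$ itself but at the perturbed vectors $x_n - tw$, where $w\in D(A)$ is arbitrary and $t>0$; since $D(A)$ is a linear subspace, these again lie in $D(A)$. Expanding $\Re\scalar{A(x_n-tw)}{x_n-tw}_H\ge 0$ and letting $n\to\infty$ (using $x_n\to 0$, $Ax_n\to y$, and continuity of the inner product) annihilates the terms in which $x_n$ appears in both slots, and leaves the inequality $-t\,\Re\scalar{y}{w}_H + t^2\,\Re\scalar{Aw}{w}_H\ge 0$. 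Dividing by $t>0$ and letting $t\fallsto 0$ then yields $\Re\scalar{y}{w}_H\le 0$.

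Second, I would upgrade this to $\scalar{y}{w}_H=0$. Since $D(A)$ is a complex linear subspace, I can replace $w$ by $-w$ and by $iw$: the former gives $\Re\scalar{y}{w}_H=0$, and the latter controls the imaginary part, so that $\scalar{y}{w}_H=0$ for every $w\in D(A)$. Density of $D(A)$ in $H$ then forces $y=0$. This shows that the closure of the graph of $A$ is again the graph of an operator, i.e., that $A$ is closable.

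Finally, accretivity of $\clos{A}$ is immediate from its construction: each $x\in D(\clos A)$ is the limit of a sequence $x_n\in D(A)$ with $Ax_n\to\clos{A}x$, so $\Re\scalar{\clos{A}x}{x}_H=\lim_n\Re\scalar{Ax_n}{x_n}_H\ge 0$ by continuity of the inner product. The main obstacle is the closability step, namely the device of testing accretivity against the perturbed vectors $x_n-tw$ and carrying out the iterated limit in the correct order (first $n\to\infty$, then $t\fallsto 0$); the remaining arguments are routine.
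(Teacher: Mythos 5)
Your proof is correct. The paper does not give its own proof of this lemma---it is stated as one of the standard facts in Section~\ref{sec:acc-op}, with a pointer to the literature (Phillips, and Hu--Papageorgiou)---so there is nothing internal to compare against; your argument, testing accretivity at the perturbed vectors $x_n-tw$, passing to the limit first in $n$ and then in $t\searrow 0$, upgrading $\Re\scalar{y}{w}_H\le 0$ to $\scalar{y}{w}_H=0$ via $w\mapsto -w$ and $w\mapsto iw$, and invoking density of $D(A)$, is precisely the classical proof, and the transfer of accretivity to $\clos{A}$ by continuity of the inner product is likewise correct.
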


\begin{proposition}\label{prop:macc-Asacc}
Let $A$ be a densely defined, closed, accretive operator. Then $A$ is \m-accretive if and only if $A^*$ is accretive.
\end{proposition}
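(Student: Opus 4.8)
The plan is to prove the two implications separately, with the forward direction essentially immediate and the converse carrying the real content. For the forward implication, suppose $A$ is \m-accretive. Then Proposition~\ref{prop:macc}(a) directly gives that $A^*$ is \m-accretive, hence in particular accretive, so there is nothing further to do.

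For the converse, assume $A$ is densely defined, closed and accretive, and that $A^*$ is accretive; I must show that $I+A$ is surjective. Since $A$ is closed, Lemma~\ref{lem:acc-clrg} tells me that $\rg(I+A)$ is closed, so it suffices to prove that $\rg(I+A)$ is dense in $H$, that is, $\rg(I+A)^\perp=\{0\}$. Because $A$ is densely defined and $I$ is a bounded everywhere-defined operator, we have $(I+A)^*=I+A^*$, and the standard range--kernel duality then yields $\rg(I+A)^\perp=\ker\bigl((I+A)^*\bigr)=\ker(I+A^*)$.

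It remains to show $\ker(I+A^*)=\{0\}$, and this is where the accretivity of $A^*$ enters. Let $y\in\ker(I+A^*)$; then $y\in D(A^*)$ and $A^*y=-y$, so that
\[
    \Re\scalar{A^*y}{y} = \Re\scalar{-y}{y} = -\norm{y}^2.
\]
Since $A^*$ is accretive, the left-hand side is nonnegative, forcing $\norm{y}=0$ and thus $y=0$. Hence $\rg(I+A)^\perp=\{0\}$, and combined with the closedness of $\rg(I+A)$ this shows that $I+A$ is surjective, i.e., $A$ is \m-accretive. I expect the only point needing a little care to be the identification $(I+A)^*=I+A^*$ together with the range--kernel duality, both of which rest on $A$ being densely defined; the accretivity estimate itself is a one-line computation.
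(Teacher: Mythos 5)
Your proof is correct. The paper states Proposition~\ref{prop:macc-Asacc} without proof, listing it among the standard facts it cites from the literature (\cite{Phi69} and \cite[Chapter~3]{HP97}), so there is no in-paper argument to compare against; your argument is the standard one, and it correctly uses only material appearing earlier in the paper (Lemma~\ref{lem:acc-clrg} to get closedness of $\rg(I+A)$, Proposition~\ref{prop:macc} for the easy direction), with the two delicate points --- the identity $(I+A)^*=I+A^*$ and the duality $\rg(I+A)^\perp=\ker\bigl((I+A)^*\bigr)$ --- both justified by the dense definedness of $A$, exactly as you note.
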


We will need the following perturbation result for an invertible \m-accretive operator.
The part about the invertibility of the operator $A+S$ appears to be new.
\begin{proposition}\label{prop:pert-invertible}
Let $A$ be an \m-accretive operator in~$H$. Let $S$ be a bounded sectorial operator on $H$ with vertex $0$ and semi-angle $\theta\in[0,\frac{\pi}{2})$. 
Suppose $A$ is invertible.
Then the operator $A + S$ is \m-accretive and invertible. Moreover, 
\[
    \norm{(A+S)^{-1}} \le 2\norm{A^{-1}} + (1+\tan\theta)^2\norm{S} \, \norm{A^{-1}}^2.
\]
\end{proposition}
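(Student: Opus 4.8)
The plan is to prove the two assertions---\m-accretivity and invertibility with the stated bound---separately, since only the latter uses the invertibility of $A$.

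For the \m-accretivity, first observe that $S$ is accretive (it is sectorial with vertex $0$), and so is $S^*$, because the numerical range of $S^*$ is the complex conjugate of that of $S$ and hence again lies in the sector of semi-angle $\theta$ about the positive real axis. Since $A$ is \m-accretive, both $A$ and, by Proposition~\ref{prop:macc}, $A^*$ are accretive, so $A+S$ and $A^*+S^*$ are accretive. The operator $A+S$ has domain $D(A)$, hence is densely defined, and it is closed because $A$ is closed and $S\in\Linop(H)$. As $S$ is bounded we have $(A+S)^*=A^*+S^*$, which is accretive; Proposition~\ref{prop:macc-Asacc} then gives that $A+S$ is \m-accretive.

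The heart of the invertibility statement is the pointwise bound $\norm{Sx}^2\le(1+\tan\theta)^2\,\norm{S}\,\Re\scalar{Sx}{x}$ for all $x\in H$. I would derive this from the sectorial Cauchy--Schwarz inequality for the form $\form{s}(x,y):=\scalar{Sx}{y}$, namely $\abs{\form{s}(x,y)}\le(1+\tan\theta)\,(\Re\form{s}(x,x))^{1/2}(\Re\form{s}(y,y))^{1/2}$ for a form with vertex $0$ and semi-angle $\theta$. Applying this with $y=Sx$ and using $\Re\form{s}(Sx,Sx)=\Re\scalar{S(Sx)}{Sx}\le\norm{S}\,\norm{Sx}^2$ gives
\[
    \norm{Sx}^2=\abs{\form{s}(x,Sx)}\le(1+\tan\theta)\,(\Re\scalar{Sx}{x})^{1/2}\,\norm{S}^{1/2}\,\norm{Sx};
\]
dividing by $\norm{Sx}$ and squaring yields the claim. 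The sectorial Cauchy--Schwarz inequality itself I would justify by writing $\form{s}=\form{h}+i\form{k}$ with $\form{h},\form{k}$ Hermitian, $\form{h}(x,x)=\Re\scalar{Sx}{x}\ge0$ and $\abs{\form{k}(x,x)}\le\tan\theta\,\form{h}(x,x)$, and applying ordinary Cauchy--Schwarz to $\form{h}$ and to $\form{k}$ separately.

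Now for $x\in D(A)$ set $f=(A+S)x$. Accretivity of $A$ gives $\Re\scalar{Sx}{x}\le\Re\scalar{f}{x}\le\norm{f}\,\norm{x}$, so the key bound yields $\norm{Sx}^2\le(1+\tan\theta)^2\norm{S}\,\norm{f}\,\norm{x}$, while invertibility of $A$ gives $\norm{x}\le\norm{A^{-1}}\,\norm{Ax}\le\norm{A^{-1}}(\norm{f}+\norm{Sx})$. Substituting and writing $w=\norm{x}^{1/2}$ produces the quadratic inequality $w^2-\norm{A^{-1}}(1+\tan\theta)\norm{S}^{1/2}\norm{f}^{1/2}\,w-\norm{A^{-1}}\,\norm{f}\le0$; solving for $w$ and estimating $(p+q)^2\le2p^2+2q^2$ gives exactly $\norm{x}\le\bigl(2\norm{A^{-1}}+(1+\tan\theta)^2\norm{S}\,\norm{A^{-1}}^2\bigr)\norm{f}$. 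This a~priori estimate shows $A+S$ is injective with closed range (if $(A+S)x_n\to g$, then $(x_n)$ is Cauchy, and closedness gives $g\in\rg(A+S)$). Since the hypotheses are symmetric under taking adjoints---$A^*$ is \m-accretive and invertible with $\norm{(A^*)^{-1}}=\norm{A^{-1}}$, and $S^*$ is sectorial with the same vertex and semi-angle and $\norm{S^*}=\norm{S}$---the same estimate applies to $A^*+S^*=(A+S)^*$ and shows it is injective. Hence $\rg(A+S)^{\perp}=\ker(A+S)^*=\{0\}$, so the range is dense, and being closed it equals $H$. Thus $A+S$ is a bijection and the estimate is precisely the asserted bound on $(A+S)^{-1}$. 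I expect the main obstacle to be the pointwise estimate of the second paragraph, and in particular securing the sharp constant $(1+\tan\theta)^2$; once that is in hand, the remaining steps are routine.
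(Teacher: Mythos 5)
Your proof of the \m-accretivity part is identical to the paper's (closedness plus accretivity of $(A+S)^*=A^*+S^*$ and Proposition~\ref{prop:macc-Asacc}), but your treatment of invertibility and the norm bound takes a genuinely different route, and it is correct. The paper first assumes $\Re\scalar{Ax}{x}\ge\eps\norm{x}^2$, in which case invertibility of $A+S$ is immediate from its \m-accretivity; it then extracts the norm bound from the second resolvent identity $(A+S)^{-1}-A^{-1}=-A^{-1}S(A+S)^{-1}$ combined with Kato's factorisation $S=P^{1/2}(I+iB)P^{1/2}$, $P=\Re S$, $\norm{B}\le\tan\theta$ (\cite[Theorem~VI.3.2]{Kat1}), and finally removes the strict accretivity by replacing $A$ with $\eps I+A$ and letting $\eps\fallsto 0$, using that the resolvent norms stay uniformly bounded. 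You instead prove the a~priori estimate $\norm{x}\le\bigl(2\norm{A^{-1}}+(1+\tan\theta)^2\norm{S}\,\norm{A^{-1}}^2\bigr)\norm{(A+S)x}$ directly from the pointwise bound $\norm{Sx}^2\le(1+\tan\theta)^2\norm{S}\,\Re\scalar{Sx}{x}$, the accretivity of $A$, and a quadratic inequality (which I checked does yield exactly the stated constant), and then conclude bijectivity from injectivity, closed range, and the symmetry of the hypotheses under taking adjoints. Your argument avoids both the regularisation and the limiting step, and the bound arrives as a genuine a~priori estimate that transfers for free to $(A+S)^*$; the price is that each approach still needs one nontrivial input on sectorial operators from Kato --- the paper uses the factorisation theorem, you use the generalised Cauchy--Schwarz inequality, which is precisely \cite[(1.15) in Section~VI.1]{Kat1}, an inequality this paper itself invokes in the proof of Theorem~\ref{thm:approx}.

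One small repair is needed in your justification of that inequality: ``applying ordinary Cauchy--Schwarz to $\form{h}$ and to $\form{k}$ separately'' is not legitimate, because $\form{k}$ is an \emph{indefinite} Hermitian form, for which the ordinary Cauchy--Schwarz inequality fails. Either cite \cite[(1.15) in Section~VI.1]{Kat1} outright, or prove $\abs{\form{k}(x,y)}\le\tan\theta\,\form{h}(x,x)^{1/2}\form{h}(y,y)^{1/2}$ from the domination $\abs{\form{k}(x,x)}\le\tan\theta\,\form{h}(x,x)$, e.g.\ by polarisation, or by writing $\form{k}=\tfrac{1}{2}\bigl((\tan\theta\,\form{h}+\form{k})-(\tan\theta\,\form{h}-\form{k})\bigr)$, applying Cauchy--Schwarz to these two nonnegative forms and then using $\sqrt{ac}+\sqrt{bd}\le\sqrt{(a+b)(c+d)}$, which keeps the constant $\tan\theta$ rather than degrading it to $2\tan\theta$. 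With that patched, your argument is complete and delivers the sharp constant $(1+\tan\theta)^2$.
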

\begin{proof}
Clearly, the operator $A+S$ is densely defined, closed and accretive.
Since also its adjoint operator $(A+S)^*=A^*+S^*$ is accretive, the operator $A+S$ is \m-accretive by Proposition~\ref{prop:macc-Asacc}.

First suppose that there exists an $\eps>0$ such that 
$\Re\scalar{Ax}{x}\ge \eps \,\norm{x}^2$ for all $x\in D(A)$.
Then $A+S-\eps I$ is accretive, and due to the \m-accretivity of $A+S$ it
follows that $A+S$ is invertible.
By the second resolvent identity we have
\[
    (A+S)^{-1}- A^{-1} = - A^{-1} S (A+S)^{-1}.
\]
Let $P=\Re S = \tfrac{1}{2}(S+S^*)$.
Then by~\cite[Theorem~VI.3.2]{Kat1} there exists a symmetric operator $B\in\Linop(H)$ such that $\norm{B}\le\tan\theta$ and $S=P^{1/2}(I+iB)P^{1/2}$.
Plugging the latter into the above equation, we obtain
\[
    (A+S)^{-1}-A^{-1} = - \bigl(A^{-1} P^{1/2}(I+iB)\bigr) (P^{1/2} (A+S)^{-1}).
\]
If $x \in D(A)$, then 
\[
    \norm{P^{1/2} x}^2 = \scalar{Px}{x} \le \Re\scalar{(A+S)x}{x} \le \norm{(A+S)x}\,\norm{x}.
\]
So $\norm{P^{1/2} (A+S)^{-1}x}^2 \le\norm{x}\,\norm{(A+S)^{-1} x}$ for all $x \in H$.
Let $x \in H$. Then 
\begin{align*}
\norm{(A+S)^{-1} x}
    & \le \norm{A^{-1} x} + \norm{A^{-1} P^{1/2}(I+iB)} \, \norm{P^{1/2} (A+S)^{-1} x}  \\
    & \le \norm{A^{-1} x} + \norm{A^{-1} P^{1/2}(I+iB)} \, \norm{x}^{1/2} \norm{(A+S)^{-1} x}^{1/2}  \\
    & \le \norm{A^{-1} x} + \tfrac{1}{2} \norm{A^{-1} P^{1/2}(I+iB)}^2 \, \norm{x} + \tfrac{1}{2} \norm{(A+S)^{-1} x}.
\end{align*}
Hence 
\begin{align*}
    \norm{(A+S)^{-1} x} &\le 2 \norm{A^{-1} x} + \norm{A^{-1} P^{1/2}(I+iB)}^2 \, \norm{x} \\
    &\le 2 \norm{A^{-1}} \, \norm{x} + (1+\tan\theta)^2\norm{S} \, \norm{A^{-1}}^2 \, \norm{x}.
\end{align*}
This proves the norm estimate.
 
Now we prove the general case. Let $\eps > 0$.
Replacing $A$ by $\eps I + A$ gives
\[
    \norm{(\eps I + A + S)^{-1}} \le 2 \norm{(\eps I + A )^{-1}} + (1+\tan\theta)^2\norm{S}\,\norm{(\eps I + A )^{-1}}^2.
\]
Since $A$ is invertible, it follows that 
\[
    \sup_{\eps\in (0,1]} \Bigl(2 \norm{(\eps I + A )^{-1}} + (1+\tan\theta)^2\norm{S}\,\norm{(\eps I + A )^{-1}}^2\Bigr) < \infty.
\]
Hence $A + S$ is invertible as the operator norm of the resolvent does not blow up for $\eps\fallsto 0$.
\end{proof}

\section{The complete case}\label{sec:complete-case}

Let $V$ and $H$ be Hilbert spaces, $\form{a}\colon V\times V\to\CC$ a sesquilinear form and $j\in\Linop(V,H)$.
We assume that
\begin{enumerate}[\upshape (I)]
\labelformat{enumi}{\textup{(#1)}}
\item\label{en:ass-one} $\form{a}$ is continuous and accretive, and
\item\label{en:ass-two} $j(V)$ is dense in~$H$.
\end{enumerate}
We emphasise that we do not assume $j$ to be injective.

Since $\form{a}$ is continuous, there exists an operator $T_0\in\Linop(V)$ such that
\[
    \form{a}(u,v) = \scalar{T_0u}{v}_V
\]
for all $u,v\in V$. Clearly $T_0$ is accretive, hence \m-accretive by Corollary~\ref{cor:bounded-macc}. We set
\[
    D_j(\form{a}) = \set{u\in V}{\text{there exists an $f\in H$ such that $\form{a}(u,v) = \scalar{f}{j(v)}_H$ for all $v\in V$}}.
\]
Note that for any $u\in D_j(\form{a})$, the element $f$ in the above definition is unique 
since $j(V)$ is dense in~$H$.
We use the notation $D_j(\form{a})$ instead of the notation $D_H(\form{a})$ that
was introduced in~\cite{AtE12:sect-form} to emphasise that this space depends not only on $H$, but also on~$j$.
It will be convenient for the following to define the sesquilinear form $\form{b}\colon V\times V\to\CC$ by 
\[
    \form{b}(u,v)=\form{a}(u,v)+\scalar{j(u)}{j(v)}_H.
\]
Since $\form{b}$ is continuous, there exists an \m-accretive operator $T\in\Linop(V)$ such that
\begin{equation}\label{eq:def-T}
    \form{b}(u,v)=\scalar{Tu}{v}_V
\end{equation}
for all $u,v\in V$. Clearly $T=T_0+j^*j$.
We assume throughout this paper that $\form{a}$ and $j$ satisfy Conditions~\ref{en:ass-one} and~\ref{en:ass-two}, and we define $T_0$, $T$, $\form{b}$ and $D_j(\form{a})$ as above.

If $\form{b}(u,u)=0$, then $\norm{j(u)}_H^2=0$ since $\form{a}$ is accretive. Put differently,
\begin{equation}\label{eq:ker-b}
    \scalar{Tu}{u}_V=0\quad\text{implies}\quad u\in\ker j.
\end{equation}
In particular, $\ker T\subset\ker j$.
As $T$ is \m-accretive, it follows from Proposition~\ref{prop:macc} that
$\ker T^*=\ker T\subset\ker j$. Hence
\begin{equation}\label{eq:jT-macc}
   \clos{\rg j^*\vphantom{T}}\subset\clos{\rg T}.
\end{equation}
Using $\ker T\subset\ker j$ we obtain $\ker T\subset\ker T_0\subset D_j(\form{a})$. Therefore,
\begin{equation}\label{eq:kerT-triv}
	\ker T\subset D_j(\form{a})\cap\ker j.
\end{equation}

\begin{proposition}[Existence of the associated operator]\label{prop:gen-welldef}
Let $V$, $H$, $\form{a}$ and $j$ be as above.
Suppose $\form{a}$ and $j$ satisfy Conditions~\ref{en:ass-one} and~\ref{en:ass-two}.
Then the following are equivalent:
\begin{enumerate}[\upshape (i)]
\labelformat{enumi}{\textup{(#1)}}
\item\label{en:part-ex:op} 
There exists an (accretive) operator $A$ in $H$ such that $D(A)=j(D_j(\form{a}))$ and
\[
    \form{a}(u,v)=\scalar{Aj(u)}{j(v)}_H\text{ for all $u\in D_j(\form{a})$ and $v\in V$.}
\]
\item\label{en:part-ex:wd0} $D_j(\form{a})\cap\ker j\subset\ker T_0$.
\item\label{en:part-ex:wd} $D_j(\form{a})\cap\ker j\subset\ker T$.
\end{enumerate}
Moreover, if the above equivalent statements are satisfied, one has 
\begin{equation}\label{eq:rel-T-IpA}
    j^*(I+A)j(u) = Tu
\end{equation}
for all $u\in D_j(\form{a})$,
where $A$ is as in~\ref{en:part-ex:op}.
\end{proposition}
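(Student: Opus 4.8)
The plan is to reduce everything to the linear map $u\mapsto f_u$ on $D_j(\form{a})$, where for $u\in D_j(\form{a})$ the element $f_u\in H$ is the unique one satisfying $\form{a}(u,v)=\scalar{f_u}{j(v)}_H$ for all $v\in V$; uniqueness (and hence linearity of $u\mapsto f_u$, since $D_j(\form{a})$ is a subspace) follows from Condition~\ref{en:ass-two}. First I would dispose of the equivalence \ref{en:part-ex:wd0}$\iff$\ref{en:part-ex:wd}: for $u\in\ker j$ one has $j^*j\,u=0$, so $Tu=T_0u+j^*j\,u=T_0u$; hence on $D_j(\form{a})\cap\ker j$ the kernels of $T_0$ and $T$ coincide, and the two inclusions are literally the same statement.

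For the main equivalence \ref{en:part-ex:op}$\iff$\ref{en:part-ex:wd0} I would argue that the representation in \ref{en:part-ex:op} forces $Aj(u)=f_u$ for every $u\in D_j(\form{a})$ (comparing with the defining property of $f_u$ and using density of $j(V)$), so that $A$ can exist as a linear operator with domain $j(D_j(\form{a}))$ precisely when the prescription $j(u)\mapsto f_u$ is well defined, i.e.\ when $u\in D_j(\form{a})\cap\ker j$ implies $f_u=0$. The key computation is that $f_u=0$ is equivalent to $T_0u=0$: indeed $f_u=0$ means $\form{a}(u,v)=0$ for all $v\in V$, which by $\form{a}(u,v)=\scalar{T_0u}{v}_V$ and nondegeneracy of the scalar product on $V$ amounts to $T_0u=0$. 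Thus well-definedness of $A$ is exactly $D_j(\form{a})\cap\ker j\subset\ker T_0$. It remains to note that a well-defined such $A$ is automatically accretive, which follows from $\scalar{Aj(u)}{j(u)}_H=\form{a}(u,u)$ (put $v=u$) together with accretivity of $\form{a}$; this justifies the parenthetical in \ref{en:part-ex:op}.

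For the final identity I would compute directly: for $u\in D_j(\form{a})$,
\[
    j^*(I+A)j(u)=j^*j\,u+j^*f_u=(T-T_0)u+j^*f_u,
\]
using $j^*j=T-T_0$. The remaining term is handled by $\scalar{j^*f_u}{v}_V=\scalar{f_u}{j(v)}_H=\form{a}(u,v)=\scalar{T_0u}{v}_V$ for all $v\in V$, whence $j^*f_u=T_0u$ and the two $T_0$ contributions cancel to leave $Tu$.

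I do not anticipate a deep obstacle here; the content is careful bookkeeping across the two inner products on $V$ and $H$ and systematic use of the uniqueness afforded by Condition~\ref{en:ass-two}. The one place to stay attentive is that $u\mapsto f_u$ is linear, so that ``$j(u)\mapsto f_u$ is well defined on $j(D_j(\form{a}))$'' genuinely reduces to the single-vector kernel condition $f_u=0$ for $u\in D_j(\form{a})\cap\ker j$, rather than a pairwise comparison.
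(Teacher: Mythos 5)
Your proposal is correct and follows essentially the same route as the paper: the paper's proof of \ref{en:part-ex:wd0}$\Rightarrow$\ref{en:part-ex:op} is exactly your well-definedness-plus-linearity argument for $j(u)\mapsto f_u$, the equivalence \ref{en:part-ex:wd0}$\Leftrightarrow$\ref{en:part-ex:wd} is obtained in both cases from $T=T_0+j^*j$, and the identity~\eqref{eq:rel-T-IpA} is derived in both cases from $j^*Aj(u)=T_0u$ together with $T=T_0+j^*j$. The only cosmetic difference is that you make the accretivity of $A$ and the uniqueness bookkeeping explicit, which the paper leaves implicit.
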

\begin{proof}
\begin{parenum}
\item[`\ref{en:part-ex:op}$\Rightarrow$\ref{en:part-ex:wd0}':] 
Let $u\in D_j(\form{a})\cap\ker j$. Then
\[
    \scalar{T_0u}{v}_V=\form{a}(u,v)=\scalar{Aj(u)}{j(v)}_H=0
\]
for all $v\in V$, whence $T_0u=0$.

\item[`\ref{en:part-ex:wd0}$\Rightarrow$\ref{en:part-ex:op}':]
Let $u\in D_j(\form{a})$, $f\in H$ and suppose that $j(u)=0$ and
$\form{a}(u,v) = \scalar{f}{j(v)}_H$ for all $v\in V$. Then $u\in D_j(\form{a})\cap\ker j$
and hence $T_0u=0$ by Condition~\ref{en:part-ex:wd0}. So $\scalar{f}{j(v)}_H =
\form{a}(u,v) = \scalar{T_0u}{v}_V = 0$ for all $v\in V$. As $j(V)$ is dense
in $H$, one deduces $f=0$. By linearity this implies existence of
the operator~$A$.

\item[`\ref{en:part-ex:wd0}$\Leftrightarrow$\ref{en:part-ex:wd}':] 
This follows from $T=T_0+j^*j$.
\end{parenum}
Finally, suppose that~\ref{en:part-ex:op} holds. Then $j^*Aj(u)=T_0u$ for all $u\in D_j(\form{a})$.
Now~\eqref{eq:rel-T-IpA} follows from $T=T_0+j^*j$.
\end{proof}
If the equivalent statements in Proposition~\ref{prop:gen-welldef} are satisfied,
we say that $(\form{a},j)$ is \emphdef{associated with an accretive operator}
and call $A$ the \emphdef{operator associated with $(\form{a},j)$}.

Now we state the generalised generation theorem, which is the main result of this section.
\begin{theorem}[Generation theorem for \m-accretive operators]\label{thm:gen-complete}
Let $V$, $H$, $\form{a}$ and $j$ be as above. 
Suppose $\form{a}$ and $j$ satisfy Conditions~\ref{en:ass-one} and~\ref{en:ass-two}.
Assume that the equivalent conditions of Proposition~\ref{prop:gen-welldef}
are satisfied and let $A$ be the operator associated with $(\form{a},j)$. Then $A$ is \m-accretive if and only if
\begin{equation}\label{eq:cond-macc}
    \rg j^*\subset\rg T.
\end{equation}
\end{theorem}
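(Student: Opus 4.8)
The plan is to reduce the statement to the surjectivity of $I+A$ and then to translate that surjectivity, via the adjoint $j^*$ and the relation~\eqref{eq:rel-T-IpA}, into the range condition~\eqref{eq:cond-macc}. Since the associated operator $A$ is already accretive (for $x=j(u)\in D(A)$ one has $\Re\scalar{Ax}{x}_H=\Re\form{a}(u,u)\ge 0$), by definition $A$ is \m-accretive precisely when $I+A$ is surjective. The guiding identity throughout is that, for $u\in D_j(\form{a})$ and all $v\in V$,
\[
    \scalar{(I+A)j(u)}{j(v)}_H = \scalar{j(u)}{j(v)}_H + \form{a}(u,v) = \form{b}(u,v) = \scalar{Tu}{v}_V,
\]
so that $j^*(I+A)j(u)=Tu$, which is exactly~\eqref{eq:rel-T-IpA}.

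For the direction from \m-accretivity to~\eqref{eq:cond-macc}, I would argue directly. Assume $I+A$ is surjective and fix $g\in H$; the goal is $j^*g\in\rg T$. By surjectivity there is an $x\in D(A)=j(D_j(\form{a}))$ with $(I+A)x=g$, say $x=j(u)$ with $u\in D_j(\form{a})$. Applying~\eqref{eq:rel-T-IpA} gives $Tu=j^*(I+A)j(u)=j^*g$, whence $j^*g\in\rg T$. As $g$ is arbitrary this yields $\rg j^*\subset\rg T$.

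For the converse I would again work pointwise in $H$. Assuming $\rg j^*\subset\rg T$ and given $g\in H$, I pick $u\in V$ with $Tu=j^*g$. The crucial step is to recognise that such a $u$ automatically lies in $D_j(\form{a})$: from $\form{b}(u,v)=\scalar{Tu}{v}_V=\scalar{j^*g}{v}_V=\scalar{g}{j(v)}_H$ one subtracts $\scalar{j(u)}{j(v)}_H$ to obtain $\form{a}(u,v)=\scalar{g-j(u)}{j(v)}_H$ for all $v\in V$, so $u\in D_j(\form{a})$ with witness $f=g-j(u)$. Consequently $Aj(u)=g-j(u)$ and $(I+A)j(u)=g$, so $g\in\rg(I+A)$; since $g$ is arbitrary, $I+A$ is surjective and $A$ is \m-accretive.

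I expect the converse direction to carry the real content. The forward direction is essentially a one-line consequence of~\eqref{eq:rel-T-IpA} together with $D(A)=j(D_j(\form{a}))$; the genuine obstacle is the observation in the converse that solvability of $Tu=j^*g$ already forces $u\in D_j(\form{a})$, i.e.\ that the range condition on $j^*$ is strong enough not merely to solve the abstract equation in $V$ but to place the solution in the form domain. This is precisely where the decomposition $T=T_0+j^*j$ and the definition of $D_j(\form{a})$ interact, and where the density of $j(V)$ (ensuring uniqueness of the witness $f$) enters implicitly.
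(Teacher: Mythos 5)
Your proof is correct and follows essentially the same route as the paper: the forward direction is the identity $j^*(I+A)j(u)=Tu$ applied to a preimage of $g$, and your key observation in the converse---that $Tu=j^*g$ forces $u\in D_j(\form{a})$ with witness $g-j(u)$---is exactly the content of the paper's Lemma~\ref{lem:DHa-formula} and Proposition~\ref{prop:gen-complete-helper}, which you have simply inlined. The only cosmetic difference is that where the paper invokes injectivity of $j^*$ to recover $f=(I+A)j(u)$, you use uniqueness of the witness $f$; both are the same consequence of the density of $j(V)$ in~$H$.
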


We first establish a simple general formula.
By $T_0^{-1}[\cdot]$ we denote taking the preimage under $T_0$ (analogously for $T$).
\begin{lemma}\label{lem:DHa-formula}
Suppose $\form{a}$ and $j$ satisfy Conditions~\ref{en:ass-one} and~\ref{en:ass-two}. Then 
$D_j(\form{a})=T_0^{-1}[\rg j^*] = T^{-1}[\rg j^*]$. In particular, $T(D_j(\form{a}))\subset \rg j^*$.
\end{lemma}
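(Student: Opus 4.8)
The plan is to unfold the definition of $D_j(\form{a})$ and to translate its defining condition into a range membership via the adjoint $j^*$. The key observation is that for $f\in H$ and $v\in V$ one has $\scalar{f}{j(v)}_H=\scalar{j^*f}{v}_V$, so that the condition ``$\form{a}(u,v)=\scalar{f}{j(v)}_H$ for all $v\in V$'' reads $\scalar{T_0u}{v}_V=\scalar{j^*f}{v}_V$ for all $v\in V$. Since $v$ ranges over all of $V$, this is equivalent to $T_0u=j^*f$. Hence $u\in D_j(\form{a})$ if and only if there exists an $f\in H$ with $T_0u=j^*f$, i.e.\ $T_0u\in\rg j^*$. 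This establishes $D_j(\form{a})=T_0^{-1}[\rg j^*]$.

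Next I would identify the two preimages. Since $T=T_0+j^*j$ and $j^*j\,u\in\rg j^*$ for every $u\in V$, the elements $T_0u$ and $Tu$ differ by the element $j^*j\,u\in\rg j^*$. As $\rg j^*$ is a linear subspace of $V$, and thus closed under subtraction, it follows that $T_0u\in\rg j^*$ if and only if $Tu\in\rg j^*$, giving $T_0^{-1}[\rg j^*]=T^{-1}[\rg j^*]$. The final assertion $T(D_j(\form{a}))\subset\rg j^*$ is then immediate from $D_j(\form{a})=T^{-1}[\rg j^*]$.

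There is no genuine obstacle here; the whole argument is a routine computation once the adjoint identity $\scalar{f}{j(v)}_H=\scalar{j^*f}{v}_V$ is brought in. The only points that deserve a moment's care are the use of the subspace property of $\rg j^*$ in passing between $T_0$ and $T$, and the observation that the density of $j(V)$ in $H$---which is needed elsewhere to make the element $f$ unique---plays no role in this set equality, since here one only needs the \emph{existence} of a preimage of $T_0u$ under $j^*$.
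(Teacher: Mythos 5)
Your proof is correct and takes essentially the same route as the paper: the first equality is obtained by the identical unfolding of the definition via the adjoint identity $\scalar{f}{j(v)}_H=\scalar{j^*f}{v}_V$, and your operator-level argument for $T_0^{-1}[\rg j^*]=T^{-1}[\rg j^*]$ (using that $Tu-T_0u=j^*ju\in\rg j^*$ and that $\rg j^*$ is a subspace) is exactly the computation behind the paper's one-line appeal to $D_j(\form{a})=D_j(\form{b})$. Your closing observation that the density of $j(V)$ plays no role in this set equality is also accurate.
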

\begin{proof}
Let $u\in V$. By definition,
$u\in D_j(\form{a})$ if and only if there exists an $f\in H$ such that $\form{a}(u,v)=\scalar{f}{j(v)}_H$ for all $v\in V$.
This is equivalent to the statement that there exists an $f\in H$ such that $\scalar{T_0u}{v}_V=\form{a}(u,v)=\scalar{j^*f}{v}_V$ for all $v\in V$. 
Therefore $T_0u\in\rg j^*$ if and only if $u\in D_j(\form{a})$.
Now the second equality follows from $D_j(\form{a})=D_j(\form{b})$.
\end{proof}

We will obtain Theorem~\ref{thm:gen-complete} as a consequence of the following proposition.
\begin{proposition}\label{prop:gen-complete-helper}
Suppose $\form{a}$ and $j$ satisfy Conditions~\ref{en:ass-one} and~\ref{en:ass-two}. Assume that $(\form{a},j)$ is associated with an accretive operator~$A$.
Let $f\in H$.
Then $f\in\rg(I+A)$ if and only if $j^*f\in T(D_j(\form{a}))$.
In particular, $A$ is \m-accretive if and only if $\rg j^*\subset T(D_j(\form{a}))$.
\end{proposition}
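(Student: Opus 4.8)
The plan is to reduce the condition $f\in\rg(I+A)$ to an equation in $V$ by applying $j^*$, and then to invoke the identity~\eqref{eq:rel-T-IpA}. The essential extra input will be the injectivity of $j^*$, which comes for free from Condition~\ref{en:ass-two}.

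First I would unfold the definitions. Since $D(A)=j(D_j(\form{a}))$, an element $f\in H$ lies in $\rg(I+A)$ if and only if there is some $u\in D_j(\form{a})$ with $(I+A)j(u)=f$. This is precisely the reformulation that makes~\eqref{eq:rel-T-IpA} applicable.

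For the forward direction, assume $(I+A)j(u)=f$ with $u\in D_j(\form{a})$. Applying $j^*$ and using $j^*(I+A)j(u)=Tu$ from~\eqref{eq:rel-T-IpA}, I obtain $Tu=j^*f$, so $j^*f\in T(D_j(\form{a}))$. For the converse, suppose $j^*f=Tu$ for some $u\in D_j(\form{a})$ and set $x:=j(u)\in D(A)$. By~\eqref{eq:rel-T-IpA} again, $j^*(I+A)j(u)=Tu=j^*f$, hence $(I+A)j(u)-f\in\ker j^*$. Here lies the one genuinely nontrivial step: because $j(V)$ is dense in $H$ by Condition~\ref{en:ass-two}, we have $\ker j^*=(\rg j)^\perp=\{0\}$, i.e., $j^*$ is injective; therefore $(I+A)j(u)=f$ and $f\in\rg(I+A)$.

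The main obstacle is exactly this last passage from an equality that holds only after applying $j^*$ to the corresponding equality in $H$ itself; without the density assumption one could conclude merely that the two sides agree modulo $(\rg j)^\perp$. Everything else is a direct bookkeeping use of~\eqref{eq:rel-T-IpA} together with $T=T_0+j^*j$. Finally, the `in particular' assertion is immediate: since $A$ is accretive, it is \m-accretive precisely when $\rg(I+A)=H$, and by the equivalence just proved this holds if and only if $j^*f\in T(D_j(\form{a}))$ for every $f\in H$, that is, if and only if $\rg j^*\subset T(D_j(\form{a}))$.
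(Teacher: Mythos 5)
Your proof is correct and takes essentially the same route as the paper's: both reduce membership in $\rg(I+A)=(I+A)j(D_j(\form{a}))$ to the identity $j^*(I+A)j(u)=Tu$ from~\eqref{eq:rel-T-IpA}, using the injectivity of $j^*$ (which follows from the density of $j(V)$ in $H$) to pass back from the equation obtained after applying $j^*$ to the original equation in $H$. The paper's proof is merely a one-line compression of exactly this argument, and your handling of the ``in particular'' statement matches as well.
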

\begin{proof}
Let $f\in H$.
As $j^*$ is injective, by Proposition~\ref{prop:gen-welldef}\,\ref{en:part-ex:op} and~\eqref{eq:rel-T-IpA} one has $f\in\rg(I+A)=(I+A)j(D_j(\form{a}))$ if and only if
$j^*f\in j^*(I+A)j(D_j(\form{a}))= T(D_j(\form{a}))$.
\end{proof}

\begin{proof}[Proof of Theorem~\ref{thm:gen-complete}]
If $A$ is \m-accretive, then Proposition~\ref{prop:gen-complete-helper} implies that Condition~\eqref{eq:cond-macc} is satisfied.
Conversely, suppose Condition~\eqref{eq:cond-macc} is satisfied.
By Lemma~\ref{lem:DHa-formula} we obtain $T(D_j(\form{a}))=\rg j^*$. Therefore $A$ is \m-accretive by
Proposition~\ref{prop:gen-complete-helper}. This proves the theorem.
\end{proof}

\begin{remark}\phantomsection\label{rem:gen-complete}
\begin{parenum}[1.]
\item It follows from~\eqref{eq:kerT-triv} and Proposition~\ref{prop:gen-welldef} that $(\form{a},j)$ is associated with an accretive operator if and only if 
\[
    D_j(\form{a})\cap\ker j=\ker T.
\]
Note that in general the latter equality does not hold with $T_0$ instead of~$T$. Moreover, $T$ may not be replaced by $T_0$ in Condition~\eqref{eq:cond-macc}.
Both can be observed in the example specified by $V=H$, $T_0=0$ and $j=I$, where $H$ is a Hilbert space with $\dim H>0$.

\item
If $V$ is finite-dimensional and $(\form{a},j)$ is associated with an accretive operator $A$, then $A$ is \m-accretive.
This follows from Theorem~\ref{thm:gen-complete} since~\eqref{eq:jT-macc} implies $\rg j^* \subset \rg T$.

\item
Suppose $(\form{a},j)$ is associated with an accretive operator~$A$. By Proposition~\ref{prop:gen-complete-helper} and Lemma~\ref{lem:DHa-formula}
the operator $A$ is \m-accretive if and only if $T(D_j(\form{a}))=\rg j^*$.

\item
If $j$ is injective, then $(\form{a},j)$ is associated with an accretive operator by Proposition~\ref{prop:gen-welldef}.
If, in addition, $T$ is bijective, then Condition~\eqref{eq:cond-macc} is trivially satisfied.
Thus Theorem~\ref{thm:mcintosh} of \McIntosh~\cite[Theorem~3.1]{McIntosh1968:repres} is a special case of Theorem~\ref{thm:gen-complete}.
We also point out that Condition~\eqref{eq:cond-macc} has already appeared 
in~\cite[Theorem~3.5]{McIntosh66:thesis} in the setting of injective~$j$.

\item\label{en:rem-gc-jell}
Theorem~\ref{thm:ate} follows from Theorem~\ref{thm:gen-complete} by the following argument.
Adopt the assumption of Theorem~\ref{thm:ate}. We may shift $\form{a}$ such that $\omega=0$.
Then $\form{a}$ is accretive. By the ellipticity condition we have
\[
    \mu\norm{u}^2_V \le \Re\form{a}(u,u)\le\abs{\form{b}(u,u)}\le \norm{Tu}_V\norm{u}_V
\]
for all $u\in V$. This implies that $T$ is injective and has closed range. It follows from $\ker T^*=\ker T=\{0\}$ that $\rg T = V$.
Hence $T$ is invertible.
Moreover, if $u\in V$ satisfies $\form{a}(u,u)=0$, then $u=0$.
Hence $D_j(\form{a})\cap\ker j=\{0\}$. Therefore $(\form{a},j)$ is associated with an accretive operator $A$ by Proposition~\ref{prop:gen-welldef}.
Moreover, $A$ is \m-accretive by Theorem~\ref{thm:gen-complete}. 
The same applies to the operator $e^{i\alpha}A$ for all $\alpha\in\RR$ such that $\abs{\alpha}$ is small.
Hence $A$ is \m-sectorial. 
\end{parenum}
\end{remark}

The following finite-dimensional example shows that it is possible that $(\form{a},j)$ is not associated with an accretive operator even though $T$ is invertible.
\begin{example}\label{ex:multival}
Let $V=\CC^2$, $H=\CC$ and $j(u_1,u_2)=u_2$. Define the form $\form{a}\colon V\times V\to\CC$ by $\form{a}(u,v)=u_2\conj{v}_1-u_1\conj{v}_2$.
Then $\form{a}$ is accretive and $T=T_0+j^*j=\bigl(\begin{smallmatrix} 0 & 1 \\ -1 & 1\end{smallmatrix}\bigr)$, which is an invertible matrix. However, $(\form{a},j)$ is not associated with an
accretive operator.
To prove this, let $f\in H$. If $u\in V$, then 
\[
    u_2\conj{v}_1-u_1\conj{v}_2=\form{a}(u,v)=\scalar{f}{j(v)}_\CC = f\conj{v}_2
\]
for all $v\in V$ if and only if $j(u)=u_2=0$ and $u_1=-f$. Hence $D_j(\form{a})=\ker j=\CC\times\{0\}$. Now the claim follows by Proposition~\ref{prop:gen-welldef}.
\end{example}

Even if $(\form{a},j)$ is associated with an accretive operator, the associated operator need not be \m-accretive. To make matters worse, 
the restriction of $(\form{a},j)$ to a closed subspace $W\subset V$ that contains $D_j(\form{a})$ and satisfies that $j(W)$ is dense in $H$
need not be associated with an accretive operator, even if this is the case for $(\form{a},j)$.
We shall see later in Proposition~\ref{prop:restrict} that such behaviour does not occur if $(\form{a},j)$ is associated with a densely defined, accretive operator. 
\begin{example}\label{ex:welldef-nonmacc}
Let $V=\ell_2$ and $H=\CC$. By $(e_k)_{k\in\NN}$ we denote the standard orthonormal basis in~$\ell_2$.
Let $T_0\in\Linop(V)$ be such that $T_0e_k=\tfrac{1}{k}e_k$ for all $k\ge 3$, $T_0e_1=-e_2$ and $T_0e_2=e_1$.
Observe that $T_0$ is \m-accretive.
Clearly $w\coloneqq(0,\tfrac{1}{1},\tfrac{1}{2},\tfrac{1}{3},\ldots)\in\ell_2$ is not in the range of~$T_0$.
Define $j\in\Linop(V,\CC)$ by $j(u)=\scalar{u}{w}_{\ell_2}$,
and set $T\coloneqq T_0+j^*j$.
As $T_0$ is injective and $j^*(\alpha)=\alpha w$ for all $\alpha\in\CC$, the operator $T$ is injective and $w\notin\rg T$.
Define $\form{a}\colon V\times V\to\CC$ by $\form{a}(u,v)=\scalar{T_0u}{v}_{\ell_2}$.
Then $T_0$ and $T$ are indeed the operators representing $\form{a}$ and $\form{b}$ in~$V$.

Since $\rg j^*=\linspan\{w\}$, it follows from Lemma~\ref{lem:DHa-formula} that $D_j(\form{a})=\{0\}$.
Therefore $(\form{a},j)$ is associated with an accretive operator that is not \m-accretive.

Let $W=\linspan\{e_1,e_2\}\subset V$ and define $\form{\hat{a}}\coloneqq\restrict{\form{a}}{W\times W}$ and $\hat{j}\coloneqq\restrict{j}{W}$.
It is easily observed that we are now in the setting of Example~\ref{ex:multival}.
Therefore $(\form{\hat{a}},\hat{j})$ is not associated with an accretive operator 
even though $D_j(\form{a})\subset W$ and $(\form{a},j)$ is associated with an accretive operator.

Furthermore, note that if we choose $W=\linspan\{e_2\}$ instead, then $\restrict{\form{a}}{W\times W}=0$, and hence
$(\restrict{\form{a}}{W\times W},\restrict{j}{W})$ is associated with the \m-accretive zero operator on $H=\CC$.
If one chooses $W=\linspan\{e_3\}$, then $(\restrict{\form{a}}{W\times W},\restrict{j}{W})$ is associated with an \m-accretive operator that is different from the zero operator. In fact, a
straightforward calculation shows that the associated operator in this case is $\tfrac{4}{3}I$. 
\end{example}
The previous example shows that taking seemingly suitable restrictions of $\form{a}$ and $j$ 
does not need to give ``better'' operators and can introduce surprising degrees of freedom.
The next simple example illustrates that $(\form{a},j)$ can be associated with a \emph{nonclosed} accretive operator.
\begin{example}
Let $V$ and $H$ be Hilbert spaces and $j\in\Linop(V,H)$.
Choose the form $\form{a}=0$ on $V\times V$. Then $(\form{a},j)$ is associated with an accretive operator~$A$.
More precisely, $D(A)=\rg j$ and $A=0$. Therefore $A$ is \m-accretive if and only if it is closed,
which is the case if and only if $\rg j=H$.
Still, $A$ is densely defined and closable.
\end{example}

We now show that every densely defined, closed, accretive operator is, in the obvious way, associated with an accretive form in the sense of Proposition~\ref{prop:gen-welldef}.
Note that the operator does not have to be \m-accretive.
\begin{example}\label{ex:dd-acc-op}
Let $R$ be a densely defined, closed, accretive operator in a Hilbert space~$H$. Equip $V\coloneqq D(R)$ with the inner product $\scalar{u}{v}_V=\scalar{Ru}{Rv}_H+\scalar{u}{v}_H$.
This makes $V$ into a Hilbert space.
Define the form $\form{a}\colon V\times V\to\CC$ by $\form{a}(u,v)=\scalar{Ru}{v}_H$.
Then $\form{a}$ is accretive and continuous. Let $j\colon V\to H$ be the inclusion.
Then $j$ is continuous with dense range. 
Obviously $(\form{a},j)$ is associated with an accretive operator.
It is easy to verify that $D_j(\form{a})=V$. So the associated operator is equal to~$R$.
\end{example}

Next we give an example such that $j$ is injective, whence $(\form{a},j)$ is associated with an accretive operator $A$,
but such that $D(A)=\{0\}$.
In particular, $A$ is not \m-accretive, so the condition $\rg j^*\subset\rg T$ in Theorem~\ref{thm:gen-complete} is not fulfilled.
The example is based on the following lemma.

\begin{lemma}\label{lem:min-DHa}
Let $H$ be an infinite-dimensional Hilbert space.
Suppose $R,S\in\Linop(H)$ are self-adjoint, positive, injective operators such that $\rg R\cap \rg S=\{0\}$.
Equip $V=\rg R$ with the inner product $\scalar{u}{v}_V=\scalar{R^{-1}u}{R^{-1}v}_H$. Let $j\colon V\to H$ be the inclusion and let $\form{a}\colon V\times V\to\CC$ be given by
\[
    \form{a}(u,v)=\scalar{RSR^{-1}u}{v}_V.
\]
Then $\form{a}$ and $j$ satisfy Conditions~\ref{en:ass-one} and~\ref{en:ass-two}, and one has $D_j(\form{a})=\{0\}$.
\end{lemma}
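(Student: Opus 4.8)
The plan is to verify the three requirements in turn: that $V = \rg R$ is a Hilbert space under the given inner product, that $\form{a}$ and $j$ satisfy Conditions~\ref{en:ass-one} and~\ref{en:ass-two}, and finally that $D_j(\form{a}) = \{0\}$. First I would observe that since $R$ is injective, $R^{-1}\colon \rg R \to H$ is a well-defined (generally unbounded) linear bijection, so $\scalar{u}{v}_V = \scalar{R^{-1}u}{R^{-1}v}_H$ is a genuine inner product and $R\colon H \to V$ is a unitary (hence $V$ is complete). Under this identification, $\norm{u}_V = \norm{R^{-1}u}_H$, and the inclusion $j$ satisfies $\norm{j(u)}_H = \norm{u}_H \le \norm{R}\,\norm{R^{-1}u}_H = \norm{R}\,\norm{u}_V$, so $j \in \Linop(V,H)$. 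Density of $\rg R$ in $H$ (needed for Condition~\ref{en:ass-two}) follows from $R$ being self-adjoint and injective, since $\clos{\rg R} = (\ker R^*)^\perp = (\ker R)^\perp = H$.

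**Continuity and accretivity of $\form{a}$.**

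Next I would check Condition~\ref{en:ass-one}. For continuity, note $\form{a}(u,v) = \scalar{RSR^{-1}u}{v}_V = \scalar{R^{-1}RSR^{-1}u}{R^{-1}v}_H = \scalar{SR^{-1}u}{R^{-1}v}_H$, using that $R^{-1}$ undoes the leading $R$. Hence $\abs{\form{a}(u,v)} \le \norm{S}\,\norm{R^{-1}u}_H\,\norm{R^{-1}v}_H = \norm{S}\,\norm{u}_V\,\norm{v}_V$, giving continuity with $M = \norm{S}$. For accretivity, $\Re\form{a}(u,u) = \Re\scalar{SR^{-1}u}{R^{-1}u}_H \ge 0$ because $S$ is positive (and self-adjoint, so the pairing is real and nonnegative). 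So both conditions hold.

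**Computing $D_j(\form{a})$ — the main point.**

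The crux, and the step I expect to carry the real content, is showing $D_j(\form{a}) = \{0\}$. By the definition of $D_j(\form{a})$, an element $u \in V = \rg R$ lies in $D_j(\form{a})$ precisely when there is an $f \in H$ with $\form{a}(u,v) = \scalar{f}{j(v)}_H = \scalar{f}{v}_H$ for all $v \in V$. Rewriting the left side as $\scalar{SR^{-1}u}{R^{-1}v}_H$ and substituting $v = Rw$ (so that $v$ ranges over all of $V$ as $w$ ranges over $H$, and $R^{-1}v = w$), the condition becomes $\scalar{SR^{-1}u}{w}_H = \scalar{f}{Rw}_H = \scalar{Rf}{w}_H$ for all $w \in H$, i.e.\ $SR^{-1}u = Rf$. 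Thus $u \in D_j(\form{a})$ forces $SR^{-1}u \in \rg R$; but also $SR^{-1}u \in \rg S$, so $SR^{-1}u \in \rg R \cap \rg S = \{0\}$. Since $S$ is injective, $R^{-1}u = 0$, hence $u = 0$. The main obstacle is simply being careful that the substitution $v = Rw$ exhausts $V$ and that the unbounded operator $R^{-1}$ is handled only on its natural domain $\rg R$; the hypothesis $\rg R \cap \rg S = \{0\}$ is exactly what collapses the domain to the trivial subspace, which is why an infinite-dimensional $H$ is needed (two injective positive operators with trivially intersecting ranges cannot exist in finite dimensions).
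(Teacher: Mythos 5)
Your proof is correct and rests on exactly the same mechanism as the paper's: membership of $u$ in $D_j(\form{a})$ forces $SR^{-1}u\in\rg R\cap\rg S=\{0\}$, and injectivity of $S$ and $R$ then gives $u=0$. The only (cosmetic) difference is packaging: the paper computes $j^*=R^2$ and $T=\restrict{R^2}{V}+RSR^{-1}$, shows $\rg T\cap\rg j^*=\{0\}$ and concludes via Lemma~\ref{lem:DHa-formula}, whereas you unwind the definition of $D_j(\form{a})$ directly through the substitution $v=Rw$ -- both routes carry out the same computation.
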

\begin{proof}
Note that both $\rg R$ and $\rg S$ are dense since $R$ and $S$ are injective.
Moreover, $V$ is a Hilbert space and $j$ is continuous with dense range.
We have
\[
    \scalar{j^*f}{v}_V=\scalar{f}{j(v)}_H = \scalar{R^{-1}R^2f}{R^{-1}v}_H = \scalar{R^2f}{v}_V
\]
for all $f\in H$ and $v\in V$. This shows that $j^*=R^2$, and therefore $j^*j=\restrict{R^2}{V}$.
It follows from the definition of $\form{a}$ that $T=\restrict{R^2}{V} + RSR^{-1}$.
Let $u\in\rg T\cap\rg j^*$. Then there exist $v\in V$ and $f\in H$ such that $Tv=u=j^*f$. It follows that $RSR^{-1}v=R^2(f-v)$, whence $SR^{-1}v\in \rg R$. So $R^{-1}v=0$ and hence $u=0$.
By Lemma~\ref{lem:DHa-formula} this proves that $D_j(\form{a})=\{0\}$ .
\end{proof}
We point out that the form $\form{a}$ in Lemma~\ref{lem:min-DHa} is symmetric and positive, but not elliptic. 
More precisely, neither the conditions of Theorem~\ref{thm:lions} nor those of Theorem~\ref{thm:ate} are satisfied.

\begin{example}\label{ex:min-DHa}
There are many possible choices for pairs of operators $R,S$ with the properties required in Lemma~\ref{lem:min-DHa}. Below we shall give a couple of examples.
If such a pair of operators $R,S$ is fixed, one may choose $V$, $\form{a}$ and $j$ as in Lemma~\ref{lem:min-DHa} to obtain an example where $j$ is injective and $(\form{a},j)$ is associated with
an accretive operator that has the domain~$\{0\}$.

\begin{asparaenum}[1.]
\item\label{en:min-DHa-PQ}
The first choice is classical and motivated by the Heisenberg uncertainty principle.
Let $H=\Ltwo[\RR]$.
Define $R=\exp(-Q^4)$ and $S=\exp(-P^4)$,
where $Q$ is the multiplication operator with $x$ in $H$ (the so-called `position operator')
and $P$ is the operator $i\tfrac{\dx[]}{\dx}$ (the so-called `momentum operator'). 
It is a consequence of Beurling's theorem (see~\cite{Hoer91}, for example) that $\rg R\cap\rg S=\{0\}$.
So $R$ and $S$ are bounded linear operators that satisfy the conditions in Lemma~\ref{lem:min-DHa}.

We point out that the above operators $R$ and $S$ are unitarily equivalent (as are $Q$ and $P$).
It is a consequence of a classical theorem of von Neumann~\cite[Theorem~3.6]{FW71:op-rg} that an abundance of such pairs of unitarily equivalent self-adjoint, injective, positive operators 
with trivially intersecting ranges exist.

\item
Another completely elementary choice of suitable operators can be obtained as follows.
Let $H=\Ltwo[0,1]$ and let $R$ be the resolvent of the realisation of the Dirichlet Laplacian in $\Ltwo[0,1]$ that is associated with the form $\form{h}\colon \Honez[0,1]\times \Honez[0,1]\to\CC$ given by $\form{h}(u,v)=\int_0^1u' \conj{v'}$.
Then $R$ is a self-adjoint, positive, injective operator on $\Ltwo[0,1]$ with $\rg R\subset C[0,1]$.
Next, let $S$ be a multiplication operator on $\Ltwo[0,1]$ associated with a strictly positive function $m\in L^\infty(0,1)$ such that $\frac{1}{m}$ is nowhere locally in~$L^2$.
It easily follows that $\rg S\cap C[0,1]=\{0\}$.
So $R,S$ have the properties required in Lemma~\ref{lem:min-DHa}.\qedhere
\end{asparaenum}
\end{example}

It is trivial to construct examples with $\ker T\ne \{0\}$ such that $(\form{a},j)$ is associated with an \m-accretive operator.
\begin{example}
Let $V=\CC^2$, $H=\CC$, $\form{a}(u,v)=0$ and $j(u)=u_1$. Then $T=j^*j$, and
hence Condition~\eqref{eq:cond-macc} is satisfied.
Moreover,
$D_j(\form{a})=\CC^2$ and $\ker j=\{0\}\times\CC=\ker T$. So the equivalent conditions in Proposition~\ref{prop:gen-welldef} are satisfied. Therefore $(\form{a},j)$ is associated with an \m-accretive operator by Theorem~\ref{thm:gen-complete}.
\end{example}

A convenient sufficient condition for $(\form{a},j)$ to be associated with an accretive operator is as follows.
\begin{lemma}
Suppose $\form{a}$ and $j$ satisfy Conditions~\ref{en:ass-one} and~\ref{en:ass-two}.
Suppose that for all $u\in V$ with $\form{b}(u,u)=0$ one has $u=0$.
Then $D_j(\form{a})\cap\ker j=\{0\}$; in particular, $(\form{a},j)$ is associated with an accretive operator.
\end{lemma}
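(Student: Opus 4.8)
The plan is to verify the hypothesis of the cleaner characterisation of association recorded in Remark~\ref{rem:gen-complete}.1, namely that $(\form{a},j)$ is associated with an accretive operator if and only if $D_j(\form{a})\cap\ker j=\ker T$. Since $\ker T\subset D_j(\form{a})\cap\ker j$ always holds by~\eqref{eq:kerT-triv}, it suffices to prove the reverse inclusion, and under the stated hypothesis I expect even to obtain the stronger statement $D_j(\form{a})\cap\ker j=\{0\}$.

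First I would unwind the definition of $T$ via~\eqref{eq:def-T}: for any $u\in V$ one has $\scalar{Tu}{u}_V=\form{b}(u,u)=\form{a}(u,u)+\norm{j(u)}_H^2$. The hypothesis says exactly that $\form{b}(u,u)=0$ forces $u=0$. Now take $u\in D_j(\form{a})\cap\ker j$. The key step is to show $\form{b}(u,u)=0$ for such $u$, so that the hypothesis gives $u=0$. Since $u\in\ker j$ we have $\norm{j(u)}_H^2=0$, so it remains to check $\form{a}(u,u)=0$. For this I would use the defining property of $D_j(\form{a})$: there exists $f\in H$ with $\form{a}(u,v)=\scalar{f}{j(v)}_H$ for all $v\in V$. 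Taking $v=u$ and using $j(u)=0$ yields $\form{a}(u,u)=\scalar{f}{j(u)}_H=0$. Hence $\form{b}(u,u)=\form{a}(u,u)+\norm{j(u)}_H^2=0$, and the hypothesis forces $u=0$.

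This establishes $D_j(\form{a})\cap\ker j=\{0\}$, which in particular gives $D_j(\form{a})\cap\ker j\subset\ker T$, i.e.\ condition~\ref{en:part-ex:wd} of Proposition~\ref{prop:gen-welldef}. Therefore $(\form{a},j)$ is associated with an accretive operator, as claimed. I would present this as a short direct argument rather than invoking the characterisation in the remark, since the chain $u\in\ker j\Rightarrow\norm{j(u)}_H=0$ together with the test $v=u$ in the definition of $D_j(\form{a})$ immediately delivers $\form{b}(u,u)=0$.

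There is no real obstacle here; the only point requiring a moment's care is recognising that membership in $D_j(\form{a})$ supplies a representing element $f$ that, when paired against $j(u)=0$, kills the form value $\form{a}(u,u)$ — this is what lets the accretivity-strengthened hypothesis on $\form{b}$ do its work. The whole proof is a one-line computation once the definitions of $D_j(\form{a})$ and $\form{b}$ are combined.
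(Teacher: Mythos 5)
Your proof is correct and is essentially identical to the paper's: both take $u\in D_j(\form{a})\cap\ker j$, use the representing element $f$ with the test vector $v=u$ and $j(u)=0$ to conclude $\form{b}(u,u)=\form{a}(u,u)+\norm{j(u)}_H^2=\scalar{f}{j(u)}_H=0$, and then invoke the hypothesis to get $u=0$, with Proposition~\ref{prop:gen-welldef} giving the final assertion. Nothing further is needed.
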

\begin{proof}
Let $u\in D_j(\form{a})\cap\ker j$. Let $f\in H$ be such that $\form{a}(u,v)=\scalar{f}{j(v)}_H$ for all $v\in V$.
Then $\form{b}(u,u)=\form{a}(u,u)+\norm{j(u)}_H^2=\scalar{f}{j(u)}_H=0$, whence $u=0$.
\end{proof}

Another sufficient condition is as follows.
\begin{lemma}\label{lem:dense-welldef}
If $j(D_j(\form{a}))$ is dense in $H$, then $(\form{a},j)$ is associated with an accretive operator.
\end{lemma}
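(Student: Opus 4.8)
The plan is to verify condition~\ref{en:part-ex:wd0} of Proposition~\ref{prop:gen-welldef}, namely $D_j(\form{a})\cap\ker j\subset\ker T_0$; by that proposition this is precisely what it means for $(\form{a},j)$ to be associated with an accretive operator. So I would fix $u\in D_j(\form{a})\cap\ker j$ and let $f\in H$ be the (unique) element with $\form{a}(u,v)=\scalar{f}{j(v)}_H$ for all $v\in V$. Since $j(u)=0$, testing this identity with $v=u$ gives $\form{a}(u,u)=\scalar{f}{j(u)}_H=0$; in particular $\Re\form{a}(u,u)=0$. The goal is then to show $f=0$, because $T_0u=j^*f$ and so $f=0$ immediately yields $u\in\ker T_0$.

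The density hypothesis enters as follows: to conclude $f=0$ it suffices to show $\scalar{f}{j(w)}_H=0$ for all $w$ in the dense set $j(D_j(\form{a}))$, that is, $\form{a}(u,w)=0$ for all $w\in D_j(\form{a})$. Here I first record a one-sided vanishing: if $w\in D_j(\form{a})$ with associated $g\in H$, then $\form{a}(w,u)=\scalar{g}{j(u)}_H=0$ since $j(u)=0$. The difficulty is that this controls the \emph{wrong} argument of the (non-symmetric) form $\form{a}$: I know $\form{a}(w,u)=0$ but need $\form{a}(u,w)=0$. Bridging this gap is the only delicate point of the proof.

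The bridge is accretivity, exploited as a first-order positivity condition. For real $t$ and any $w\in V$, expanding and using $\Re\form{a}(u,u)=0$ gives
\[
    0\le\Re\form{a}(u+tw,u+tw)=t\,\Re\bigl(\form{a}(u,w)+\form{a}(w,u)\bigr)+t^2\,\Re\form{a}(w,w).
\]
A nonnegative quadratic in $t$ that vanishes at $t=0$ must have vanishing linear coefficient, so $\Re\bigl(\form{a}(u,w)+\form{a}(w,u)\bigr)=0$ for all $w\in V$. Now I restrict to $w\in D_j(\form{a})$, which is a linear subspace by Lemma~\ref{lem:DHa-formula}: there $\form{a}(w,u)=0$, whence $\Re\form{a}(u,w)=0$. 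Replacing $w$ by $iw\in D_j(\form{a})$ (and noting $\form{a}(iw,u)=i\form{a}(w,u)=0$ as well) and using conjugate-linearity in the second slot turns this into $\Im\form{a}(u,w)=0$. Hence $\form{a}(u,w)=0$ for all $w\in D_j(\form{a})$.

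To finish, $\scalar{f}{j(w)}_H=\form{a}(u,w)=0$ for all $w\in D_j(\form{a})$, so $f$ is orthogonal to the dense set $j(D_j(\form{a}))$ and therefore $f=0$. Consequently $\scalar{T_0u}{v}_V=\form{a}(u,v)=\scalar{f}{j(v)}_H=0$ for all $v\in V$, so $T_0u=0$. This establishes condition~\ref{en:part-ex:wd0} and completes the proof. As noted, the substantive step is the passage from the accessible identity $\form{a}(w,u)=0$ to the required $\form{a}(u,w)=0$ via the vanishing linear term; everything else is formal.
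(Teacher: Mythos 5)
Your proof is correct and follows essentially the same route as the paper's: both verify condition~(ii) of Proposition~\ref{prop:gen-welldef} by exploiting accretivity of $\form{a}$ on linear combinations of $u$ with elements $w\in D_j(\form{a})$, using that $\form{a}(\,\cdot\,,u)$ vanishes on $D_j(\form{a})$ (since $j(u)=0$) to force $\scalar{f}{j(w)}_H=0$, and then concluding $f=0$ by density. The only difference is cosmetic: the paper expands $\Re\form{a}(w-\lambda u,w-\lambda u)\ge 0$ with a complex parameter $\lambda$ and lets unboundedness in $\lambda$ kill the inner product in one stroke, whereas you use a real parameter $t$ plus the substitution $w\mapsto iw$ to handle real and imaginary parts separately.
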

\begin{proof}
This follows from the fact that possibly multi-valued, densely defined, accretive operators are single-valued, see~\cite[Remark~3.1.42]{HP97}.

We provide a proof to be self-contained.
We show that $D_j(\form{a})\cap\ker j\subset\ker T_0$.
Let $u\in D_j(\form{a})\cap\ker j$ and let $f\in H$ be such that
$\form{a}(u,v)=\scalar{f}{j(v)}_H$ for all $v\in V$. Then $T_0u=j^*f$. Let $w\in D_j(\form{a})$ and $\lambda\in\CC$.
Then there exists a $g\in H$ such that
\[
    \form{a}(w-\lambda u,u) = \scalar{g}{j(u)}_H = 0.
\]
Hence we obtain
\[
    0\le\Re\form{a}(w-\lambda u,w-\lambda u)=\Re\form{a}(w,w) -\Re\scalar{\lambda f}{j(w)}_H.
\]
This shows that $\scalar{f}{j(w)}_H=0$ for all $w\in D_j(\form{a})$. Therefore $f=0$. 
Hence $T_0u = j^*f = 0$.
Now the statement follows from Proposition~\ref{prop:gen-welldef}.
\end{proof}

We next give an example where $j(D_j(\form{a}))$ is dense in $H$, but such that the associated operator is not \m-accretive. In fact, the example is a suitable special case of Example~\ref{ex:dd-acc-op}.
\begin{example}\label{ex:deriv-Rplus}
Let $H=\Ltwo[0,\infty]$ and $V=\Honez[0,\infty]$. Let $j$ be the (injective) embedding of $V$ into $H$.
Define $\form{a}\colon V\times V\to\CC$ by 
\[
    \form{a}(u,v)=-\int_0^\infty u'\conj{v}.
\]
Using the continuous representative of $u\in\Honez[0,\infty]$,
we obtain
\[
    2\Re \form{a}(u,u) = -\int_0^\infty (u'\conj{u}+\conj{u'}u)
        = -\bigl[ \abs{u}^2\bigr]_0^\infty = \abs{u(0)}^2 = 0
\]
for all $u\in V$. Hence $\form{a}$ and $j$ satisfy Conditions~\ref{en:ass-one} and~\ref{en:ass-two}. 
It is easily observed that the operator $A$ associated with $(\form{a},j)$ is given by $Au=-u'$ and $D(A)=\Honez[0,\infty]$.

Note that the operator $B$ in $H$ defined by $Bu=-u'$ and $D(B)=\Hone[0,\infty]$ is accretive and strictly extends~$A$.
So $D(A)$ is dense, but $A$ fails to be \m-accretive. 

We remark that the operator $-A$ is accretive and satisfies $(-A)^* = B$. 
Clearly $-A$ is densely defined and closed.
Hence $-A$ is \m-accretive by Proposition~\ref{prop:macc-Asacc}
and $B$ is \m-accretive by Proposition~\ref{prop:macc}.
Note that the operator $-A$ is associated with $(-\form{a},j)$, while $B=(-A)^*$ is associated with $(\form{\tilde{a}},\tilde{j})$,
where the form $\form{\tilde{a}}\colon\Hone[0,\infty]\times\Hone[0,\infty]\to\CC$ is defined by 
$\form{\tilde{a}}(u,v)=-\int_0^\infty u'\overline{v}$ and $\tilde{j}$ is the embedding of $\Hone[0,\infty]$ into~$H$.
\end{example}

In the following example an \m-accretive operator
is associated with an accretive form corresponding to a second order differential expression.
Later in Section~\ref{sec:mcintosh-cond} after Proposition~\ref{prop:Krein-Tinv} we will briefly revisit this example. 
\begin{example}\label{ex:signdiff}
Let $a,b\in\RR$ with $a<0<b$. Let $H=\Ltwo[a,b]$, and let $V=\Honez[a,b]$ with norm $\norm{u}_V^2=\int_a^b\abs{u'}^2$. Let $j$ be the embedding of $V$ in~$H$.
Define
$\form{a}\colon V\times V\to\CC$ by
\[
    \form{a}(u,v)=i\int_a^b(\sign x)\, u'(x)\conj{v'(x)}\dx.
\]
Then $\form{a}$ and $j$ satisfy Conditions~\ref{en:ass-one} and~\ref{en:ass-two}.
Note that $\Re\form{a}(u,u)=0$ for all $u\in V$.
It is readily verified that the associated operator $A$ is given by $Au=-i({\sign}\cdot u')'$ on the domain
$D(A)=\{u\in\Honez[a,b]: {\sign}\cdot u'\in\Hone[a,b]\}$.
Since $iA$ is a self-adjoint operator by~\cite[Theorem~5 in \S18.2]{Naimark68}, the operator $A$ is \m-accretive and Condition~\eqref{eq:cond-macc} is satisfied.

In particular, we may choose $a=-1$ and $b=1$.
Then a straightforward calculation yields $(T_0 u)(s)=i(\sign s)\bigl(u(s) + (\abs{s}-1)u(0)\bigr)$ for all $s\in(-1,1)$ and $j^*j = \restrict{(-\Delta^\mathdcl{D}_{(-1,1)})^{-1}}{\Honez[-1,1]}$,
where $\Delta^\mathdcl{D}_{(-1,1)}$ denotes the Dirichlet Laplacian on $(-1,1)$.
Note that $T_0$ is not injective since $s\mapsto 1-\abs{s}$ is an element of the kernel of~$T_0$.
In this example a direct verification of Condition~\eqref{eq:cond-macc} appears to be difficult.
\end{example}

We define the subspace
\[
    V_j(\form{a})\coloneqq \set{u\in V}{\text{$\form{a}(u,v)=0$ for all $v\in\ker j$}}.
\]
It is immediate from the definitions that $D_j(\form{a})\subset V_j(\form{a})$ and that $V_j(\form{a})$ is closed. 
Moreover, it is easily observed that
\[
    V_j(\form{a})=V_j(\form{b})=T^{-1}\bigl[(\ker j)^\perp\bigr] = (T^*\ker j)^\perp,
\]
where $\perp$ denotes the orthogonal complement in~$V$.
Hence, if $T$ is invertible, then it follows from Lemma~\ref{lem:DHa-formula} that $V_j(\form{a})$ is the closure of $D_j(\form{a})$ .

The space $V_j(\form{a})$ plays an important role in the theory of $j$-elliptic forms as in Theorem~\ref{thm:ate}.
If $\form{a}$ is $j$-elliptic, then $D_j(\form{a})$ is dense in $V_j(\form{a})$ by~\cite[Proposition~2.3\,(ii)]{AtE12:sect-form},
one has the (possibly nonorthogonal) decomposition $V=V_j(\form{a})\oplus\ker j$ by~\cite[Theorem~2.5\,(i)]{AtE12:sect-form},
and the associated operator is determined by the restriction $(\restrict{\form{a}}{V_j(\form{a})\times V_j(\form{a})},\restrict{j}{V_j(\form{a})})$.
We point out that the first statement also follows immediately from
Lemma~\ref{lem:DHa-formula} since in the $j$-elliptic case the operator $T$ is invertible by Remark~\ref{rem:gen-complete}.\ref{en:rem-gc-jell}.

If $\form{a}$ is merely accretive, then in general $D_j(\form{a})$ is not dense in $V_j(\form{a})$
even if $j$ is injective and $(\form{a},j)$ is associated with an \m-accretive operator.
An example for this is as follows.
\begin{example}
Let $V=H=\ell_2$. Let $S\colon V\to V$ be the right shift, so $Se_n=e_{n+1}$ for all $n\in\NN$.
Define $T'\in\Linop(V)$ by $T'e_n=2^{-n}e_n$ and $j\in\Linop(V,H)$ by $j=(I-2S^*)T'=T'(I-S^*)$.
Since $T'(I-S^*)$ is the composition of two injective maps, 
it follows that $j$ is injective and, in particular, $V_j(\form{a})=V$.
If $u\in V$, then
\[
    \norm{j(u)}_H^2 = \norm{(I-2S^*)T'u}_H^2\le 9\norm{T'u}_V^2\le\tfrac{9}{2}\scalar{T'u}{u}_V.
\]
Hence if one defines $\form{a}\colon V\times V\to\CC$ by
\[
    \form{a}(u,v)=\tfrac{9}{2}\scalar{T'u}{v}_V-\scalar{j(u)}{j(v)}_H,
\]
then $\form{a}$ is continuous and accretive.
The definition of $j$ implies $j^*=T'(I-2S)$.
As $I-2S$ is injective, also $j^*$ is injective. Therefore $j$ has dense range. 
Moreover, $\rg j^*\subset\rg T'$.
Note that $\form{b}(u,v)=\scalar{Tu}{v}_V$, where $T=\frac{9}{2}T'$.
So $(\form{a},j)$ is associated with an \m-accretive operator by Theorem~\ref{thm:gen-complete}.

Define $w=\sum_{n=1}^\infty 2^{-n}e_n\in V$. 
It is readily verified that $\ker(\tfrac{1}{2}I-S^*)=\linspan\{w\}$.
Moreover, observe that $D_j(\form{a})=T^{-1}[\rg j^*]=\rg(\tfrac{1}{2}I-S)$.
Therefore $(D_j(\form{a}))^\perp=\linspan\{w\}$. In particular, $D_j(\form{a})$ is not dense in $V_j(\form{a})=V$.

We also point out that in this example $D_j(\form{a})$ is closed in~$V$.
For this it suffices to show that $\tfrac{1}{2}I-S$ is Fredholm.
By~\cite[Theorem 5.17]{Dou72} the latter is equivalent to $\tfrac{1}{2}I-S$ being invertible in the Calkin algebra,
which is the quotient space of the bounded operators modulo the compact operators and becomes a $C^*$-algebra in the natural way; 
see~\cite[Chapter~5]{Dou72}.
As $SS^*-I$ is compact and $S^*S=I$, the operator $S$ is unitary in the Calkin algebra.
It follows that $\tfrac{1}{2}I-S$ is invertible in the Calkin algebra.
\end{example}

The following example shows that $(\form{a},j)$ can be associated with a \emph{nonclosable} accretive operator.
It is obtained by adapting Phillips' example for a nonclosed, single-valued, maximal accretive operator in~\cite[Footnote~6]{Phi59}.
Note that our operator is not maximal accretive.
We will see in Proposition~\ref{prop:phillips} that
we cannot obtain such a nonclosed, maximal accretive operator in our setting.
\begin{example}\label{ex:phillips}
Let $H=\ell_2(\NN)$, and let $(e_n)_{n\in\NN}$ be the standard orthonormal basis. Equip the 
space 
\[
    V \coloneqq \set[B]{u=(u_n)_{n\in\NN_0}\in\ell_2(\NN_0)}{\sum_{n=0}^\infty\abs{2^n u_n}^2<\infty}
\]
with the inner product defined by
\[
    \scalar{u}{v}_V \coloneqq \sum_{n=0}^\infty 4^n u_n\conj{v_n}.
\]
Then $V$ is a Hilbert space.

Define $y\coloneqq\sum_{n=2}^\infty 2^{-n} e_n\in H$. Let $j\in\Linop(V,H)$ be defined by
\[
    j(u) = u_0 e_1 + u_1 y + \sum_{n=2}^\infty u_n e_n.
\]
Note that $j$ has dense range in~$H$. We show that $j$ is injective. Let $u\in\ker j$. Then $u_0=0$. Moreover, $u_n = -u_1 2^{-n}$ for all $n\ge 2$.
This implies that $u=0$ because $(2^n u_n)_{n\in\NN_0}\in\ell_2(\NN_0)$ as $u\in V$. 

Define $\form{a}\colon V\times V\to\CC$ by 
\[
    \form{a}(u,v) = u_1\conj{v_0} - u_0\conj{v_1}.
\]
Then $\form{a}$ is accretive and continuous. So $\form{a}$ and $j$ satisfy Conditions~\ref{en:ass-one} and~\ref{en:ass-two},
and $(\form{a},j)$ is associated with an accretive operator $A$ since $j$ is injective.

We first show that $D_j(\form{a})=\set{u\in V}{u_0=0}$.
\begin{asparaenum}
\item[``$\subset$''] Let $u\in D_j(\form{a})$, and let $f\in H$ be such that
$\form{a}(u,v) = \scalar{f}{j(v)}_H$
for all $v\in V$. Then
\[
    u_1\conj{v_0} - u_0\conj{v_1} = \scalar{f}{j(v)}_H = f_1\conj{v_0} + \scalar{f}{y}_H\conj{v_1} + \sum_{n=2}^\infty f_n \conj{v_n}
\]
for all $v\in V$. This implies that $f_n=0$ for all $n\ge 2$.
Hence $\scalar{f}{y}_H=0$ and so $u_0=0$.
\item[``$\supset$''] Let $u\in V$ be such that $u_0=0$. Set $f\coloneqq u_1e_1$. Then
\[
    \form{a}(u,v) = u_1\conj{v_0} = f_1\conj{v_0} = \scalar{f}{j(v)}_H
\]
for all $v\in V$, i.e., $u\in D_j(\form{a})$.
\end{asparaenum}

Now we show that the operator $A$ associated with $(\form{a},j)$ is not closable.
Observe from the preceding calculations that
\[
    D(A)=\set[B]{u_1 y + \sum_{n=2}^\infty u_n e_n}{u\in V}.
\]
Moreover, $Ay=e_1$ and $Ae_n=0$ for all $n\ge 2$.
Therefore, $y_m\coloneqq \sum_{n=m}^\infty 2^{-n}e_n$ is in $D(A)$ and $Ay_m=e_1$ for all $m\ge 2$.
Since $\lim_{m\to\infty}y_m=0$ in $H$, it follows that $A$ is not closable.
Observe that $A$ is not densely defined, in accordance with Lemma~\ref{lem:acc-ddcl}.

For later use we note that
\begin{equation}\label{eq:ex-nonclos-opran}
   \rg(I+A) = \set[B]{u_1 y + \sum_{n=1}^\infty u_n e_n}{u \in V}.
\end{equation}
Finally, we point out that $D_j(\form{a})$ is closed in $V$,
but $D_j(\form{a})\ne V_j(\form{a})=V$.
\end{example}

The next proposition explains why in the following we may restrict our attention to the case $\ker T=\{0\}$.
\begin{proposition}\label{prop:Tinjective}
Assume $\form{a}$ and $j$ satisfy Conditions~\ref{en:ass-one} and~\ref{en:ass-two}.
Let $W$ be a closed subspace of $V$ such that $V=W\oplus \ker T$ (not necessarily orthogonal).
Define $\form{\hat{a}}\coloneqq\restrict{\form{a}}{W\times W}$ and $\hat{j}\coloneqq\restrict{j}{W}$.
Then $\form{\hat{a}}$ and $\hat{j}$ satisfy~\ref{en:ass-one} and~\ref{en:ass-two}. Moreover, the following statements hold.
\begin{enumerate}[\upshape (a)]
\labelformat{enumi}{\textup{(#1)}}
\item\label{en:ker-That}
Let $\widehat{T}$ be defined as in~\eqref{eq:def-T} with respect to $\form{\hat{a}}$ and $\hat{j}$. Then $\ker\widehat{T}=\{0\}$.
\item \label{en:res-ident}
The following three identities hold:
\begin{align*}
    \ker j &= \ker\hat{j}\oplus\ker T,\\
    D_j(\form{a}) &= D_{\hat{j}}(\form{\hat{a}}) \oplus \ker T,\\
    V_j(\form{a}) &= V_{\hat{j}}(\form{\hat{a}}) \oplus \ker T.
\end{align*}
In particular, one has
\begin{equation}\label{eq:Dja-Djhah}
    j(D_j(\form{a})) = \hat{j}(D_{\hat{j}}(\form{\hat{a}})).
\end{equation}
\item\label{en:res-op}
One has $D_j(\form{a})\cap\ker j\subset\ker T$ if and only if $D_{\hat{j}}(\form{\hat{a}})\cap\ker\hat{j}=\{0\}$, and if this is the case, then
$A=\widehat{A}$, where $A$ and $\widehat{A}$ are the operators associated with $(\form{a},j)$ and $(\form{\hat{a}},\hat{j})$, respectively.
\item\label{en:Z-bdd}
Assume in addition that $(\form{a},j)$ is associated with an \m-accretive operator. 
Then there exists a unique operator $Z\colon H\to W$ such that $TZ=j^*$.
Moreover, $Z$ is bounded and $(I+A)^{-1}=j Z$.
\end{enumerate}
\end{proposition}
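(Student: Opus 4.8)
The plan is to base everything on the splitting $V=W\oplus\ker T$ together with one structural observation. Since $T$ is \m-accretive, Proposition~\ref{prop:macc} gives $\ker T=\ker T^*$; as $\ker T\subset\ker j$ and $T^*=T_0^*+j^*j$, every $k\in\ker T$ satisfies $T_0^*k=0$, and since moreover $\ker T\subset\ker T_0$ we also have $T_0k=0$. Hence for all $u\in V$ and $k\in\ker T$ both $\form{b}(u,k)=\scalar{u}{T^*k}_V=0$ and $\form{a}(u,k)=\scalar{u}{T_0^*k}_V=0$, while $\form{a}(k,v)=\scalar{T_0k}{v}_V=0$ for all $v\in V$. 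This is the device that lets me upgrade any identity holding ``for all $v\in W$'' to one holding ``for all $v\in V$''. The preliminary claims are then immediate: restriction preserves continuity and accretivity, so $\form{\hat{a}}$ satisfies~\ref{en:ass-one}, and $j(V)=j(W)$ because $\ker T\subset\ker j$, so $\hat{j}(W)$ is dense and~\ref{en:ass-two} holds.

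For~\ref{en:ker-That}, take $u\in W$ with $\widehat{T}u=0$. Then $\form{\hat{b}}(u,v)=0$ for all $v\in W$, and since $\form{b}$ and $\form{\hat{b}}$ coincide on $W\times W$ the lever extends this to $\form{b}(u,v)=0$ for all $v\in V$; thus $Tu=0$ and $u\in W\cap\ker T=\{0\}$. For~\ref{en:res-ident} I would first prove the intersection descriptions $D_{\hat{j}}(\form{\hat{a}})=D_j(\form{a})\cap W$ and $V_{\hat{j}}(\form{\hat{a}})=V_j(\form{a})\cap W$, where in each case the only nontrivial inclusion extends a statement valid for $v\in W$ (resp.\ $v\in\ker\hat{j}$) to all $v\in V$ (resp.\ $v\in\ker j=\ker\hat{j}\oplus\ker T$) by the lever. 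The first identity $\ker j=\ker\hat{j}\oplus\ker T$ is a direct consequence of $\ker T\subset\ker j$. Since $\ker T\subset D_j(\form{a})\cap V_j(\form{a})$ (using $\ker T\subset\ker T_0$ and~\eqref{eq:kerT-triv}), decomposing any $u=w+k$ then turns each ``$\cap W$'' description into the claimed direct sum, and~\eqref{eq:Dja-Djhah} follows because $j$ annihilates $\ker T$.

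Part~\ref{en:res-op} is bookkeeping on top of~\ref{en:res-ident}: intersecting the direct-sum formulas gives $D_j(\form{a})\cap\ker j=(D_{\hat{j}}(\form{\hat{a}})\cap\ker\hat{j})\oplus\ker T$, so the left-hand side is contained in $\ker T$ exactly when its $W$-component vanishes, which is the stated equivalence. When it holds, Proposition~\ref{prop:gen-welldef} yields $A$ and $\widehat{A}$; they share the domain by~\eqref{eq:Dja-Djhah}, and the defining relations match because $\form{a},\form{\hat{a}}$ and $j,\hat{j}$ agree on $W$ while $\hat{j}(W)$ is dense, forcing the two representing vectors to coincide. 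For~\ref{en:Z-bdd} I would use that \m-accretivity gives $\rg j^*\subset\rg T$ by Theorem~\ref{thm:gen-complete}, so $Tu=j^*f$ is solvable for each $f$; as $T$ is injective on $W$, there is a unique solution in $W$, which defines $Z\colon H\to W$ and also yields its uniqueness. Boundedness follows from the closed graph theorem: if $f_n\to f$ and $Zf_n\to w$, then $Tw=\lim TZf_n=\lim j^*f_n=j^*f$ with $w\in W$, so $w=Zf$. Finally, writing $(I+A)^{-1}f=j(u)$ with $u\in D_j(\form{a})$ and applying $j^*$ gives $Tu=j^*f$ by~\eqref{eq:rel-T-IpA}; splitting off the $\ker T$-part of $u$ identifies its $W$-component with $Zf$, and since $j$ kills $\ker T$ we obtain $jZf=j(u)=(I+A)^{-1}f$.

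The main obstacle is conceptual rather than computational: recognising and isolating the relation $\ker T=\ker T^*\subset\ker T_0^*\cap\ker T_0$. It is precisely this that permits passing from the restricted form on $W$ back to the full form on $V$; without it the injectivity in~\ref{en:ker-That} and all three identities in~\ref{en:res-ident} would fail. Once it is in hand, every remaining step is routine.
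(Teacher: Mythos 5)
Your proposal is correct and takes essentially the same route as the paper: the central observation that $\ker T=\ker T^*\subset\ker T_0\cap\ker T_0^*$ (so that $\form{a}$ and $\form{b}$ vanish whenever one argument lies in $\ker T$) is exactly the paper's lever for extending identities from $W$ to $V=W\oplus\ker T$, and parts (a)--(d) are then deduced in the same way. The only deviations are cosmetic: you obtain boundedness of $Z$ via the closed graph theorem where the paper writes $Z=(\restrict{T}{W})^{-1}j^*$ as a composition of a bounded and a closed operator, and you verify $(I+A)^{-1}=jZ$ by pulling back $(I+A)^{-1}f$ instead of checking $(I+A)j(Zf)=f$ directly.
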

\begin{proof}
Obviously $\form{\hat{a}}$ is continuous and accretive. Since $\ker T\subset\ker j$ by~\eqref{eq:ker-b}, the map $\hat{j}$ has dense range.
So $\form{\hat{a}}$ and $\hat{j}$ satisfy Conditions~\ref{en:ass-one} and~\ref{en:ass-two}. 
Next we state some basic identities.
If $v\in\ker T$, then $v\in\ker T^*$ by Proposition~\ref{prop:macc}\,\ref{en:ker-macc} and hence $\form{b}(u,v)=\scalar{u}{T^*v}_V=0$ for all $u\in V$.
Because $\ker T\subset\ker j$ it follows that $\form{a}(u,v)=0$ for all $u\in V$ and $v\in\ker T$.

\begin{asparaenum}
\item[\ref{en:ker-That}]
Let $u\in\ker\widehat{T}$. Then $\form{b}(u,v)=\scalar[b]{\widehat{T}u}{v}_V=0$ for all $v\in W$. As also $\form{b}(u,v)=0$ for all $v\in\ker T$,
we obtain $\form{b}(u,v)=0$ for all $v\in V$. Hence $Tu=0$, i.e., $u\in\ker T$. Since $u\in W$, we deduce that $u=0$. So $\ker\widehat{T}=\{0\}$.

\item[\ref{en:res-ident}]
Since $\ker T \subset D_j(\form{a}) \subset V_j(\form{a})$ and
$\ker T \subset \ker j$ by~\eqref{eq:kerT-triv}, it suffices to show the three identities
\begin{align*}
  \ker j \cap W &= \ker\hat{j},\\
  D_j(\form{a}) \cap W &= D_{\hat{j}}(\form{\hat{a}}), \\
  V_j(\form{a}) \cap W &= V_{\hat{j}}(\form{\hat{a}}).
\end{align*}
The first identity is clear. For the proof of the second identity
let $u \in D_j(\form{a}) \cap W$. Then there exists an $f \in H$
such that $\form{\hat{a}}(u,v) = \form{a}(u,v) = \scalar{f}{j(v)}$
for all $v\in W$, so $u \in D_{\hat{j}}(\form{\hat{a}})$.
Conversely, let $u \in D_{\hat{j}}(\form{\hat{a}})$. 
Then there exists an $f\in H$ such that $\form{a}(u,v)=\scalar{f}{j(v)}_H$ for all $v\in W$. 
But $\form{a}(u,v)=0=\scalar{f}{j(v)}_H$ for all $v\in\ker T\subset\ker j$. Therefore $\form{a}(u,v)=\scalar{f}{j(v)}_H$ for all $v\in W+\ker T=V$.
So $u\in D_j(\form{a})$ and hence $D_{\hat{j}}(\form{\hat{a}})\subset D_j(\form{a})\cap W$.
The third identity is proved similarly.

\item[\ref{en:res-op}] 
By~\ref{en:res-ident} it follows that
\[
	D_j(\form{a})\cap\ker j = \bigl(D_{\hat{j}}(\form{\hat{a}})\cap\ker\hat{j}\bigr)\oplus\ker T.
\]
This shows that $D_j(\form{a})\cap\ker j\subset\ker T$ if and only if $D_{\hat{j}}(\form{\hat{a}})\cap\ker\hat{j}=\{0\}$.

Now let $u\in D_j(\form{a})\cap W$ and $f\in H$. Then as in the proof of~\ref{en:res-ident} one has
$\form{a}(u,v)=\scalar{f}{j(v)}_H$ for all $v\in V$ if and only if
$\form{\hat{a}}(u,v)=\scalar{f}{\smash{\hat{j}}(v)}_H$ for all $v\in W$. 
Hence the second statement follows from~\eqref{eq:Dja-Djhah}.

\item[\ref{en:Z-bdd}]
By Theorem~\ref{thm:gen-complete} we have $\rg j^*\subset\rg T=\rg\restrict{T}{W}$.
Note that $\restrict{T}{W}$ is injective.
So the operator $Z\colon H\to W$ is given by $Z=(\restrict{T}{W})^{-1}j^*$. It is closed as a composition of a
bounded and a closed operator. Consequently, $Z$ is bounded.

To prove the final assertion, let $f\in H$. Then $TZf=j^*f$.
Hence $\form{b}(Zf,v)=\scalar{TZf}{v}_V=\scalar{j^*f}{v}_V=\scalar{f}{j(v)}_H$ for all $v\in V$.
So $(I+A)j(Zf)=f$ and therefore
$(I+A)^{-1}f = j (Zf)$.
\qedhere
\end{asparaenum}
\end{proof}

The next proposition easily follows with Proposition~\ref{prop:Tinjective}.
\begin{proposition}
Assume $\form{a}$ and $j$ satisfy Conditions~\ref{en:ass-one} and~\ref{en:ass-two}.
Assume that $(\form{a},j)$ is associated with an \m-accretive operator~$A$.
Suppose that $j\colon V\to H$ is compact. Then $A$ has compact resolvent.
\end{proposition}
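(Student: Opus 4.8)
The plan is to derive compactness of the resolvent directly from the factorisation of $(I+A)^{-1}$ provided by Proposition~\ref{prop:Tinjective}\,\ref{en:Z-bdd}. Since $\ker T$ is a closed subspace of the Hilbert space $V$, I would first take $W\coloneqq(\ker T)^\perp$, so that $V=W\oplus\ker T$ is an (orthogonal, hence admissible) direct-sum decomposition of the type required in Proposition~\ref{prop:Tinjective}. As $(\form{a},j)$ is by hypothesis associated with an \m-accretive operator, part~\ref{en:Z-bdd} of that proposition applies and yields a \emph{bounded} operator $Z\colon H\to W$ with $TZ=j^*$ and
\[
    (I+A)^{-1}=jZ.
\]

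The heart of the argument is then just the behaviour of compact operators under composition. Since $Z\in\Linop(H,W)$ is bounded and $j\colon V\to H$ is compact, the composition $jZ$ maps bounded subsets of $H$ into relatively compact subsets of $H$; hence $(I+A)^{-1}=jZ$ is compact.

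It remains to rephrase this in terms of the resolvent of $A$. Because $A$ is \m-accretive we have $(0,\infty)\subset\rho(-A)$, so $1\in\rho(-A)$ and $(I+A)^{-1}$ is, up to sign, the resolvent of $A$ at the point $-1\in\rho(A)$. By the resolvent identity, compactness of the resolvent at one point of the nonempty resolvent set propagates to every point of $\rho(A)$, so $A$ has compact resolvent. I do not anticipate a genuine obstacle here: the only point needing a word of care is that the abstract decomposition hypothesis of Proposition~\ref{prop:Tinjective} is met for free by the orthogonal complement $W=(\ker T)^\perp$, after which the compactness of $(I+A)^{-1}=jZ$ is immediate.
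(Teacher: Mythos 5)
Your proof is correct and follows essentially the same route as the paper: the paper also chooses $W=(\ker T)^\perp$, invokes Proposition~\ref{prop:Tinjective}\,\ref{en:Z-bdd} to write $(I+A)^{-1}=jZ$ with $Z$ bounded, and concludes compactness from the composition of a bounded and a compact operator. The closing remark about propagating compactness via the resolvent identity is a standard addition the paper leaves implicit.
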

\begin{proof}
Choose $W=(\ker T)^\perp$ and let $Z$ be as in Proposition~\ref{prop:Tinjective}\,\ref{en:Z-bdd}. As $Z$ is bounded and $j$ is compact,
the resolvent $(I+A)^{-1}=jZ$ is compact.
\end{proof}

In Example~\ref{ex:welldef-nonmacc} we saw that various degenerate behaviour can occur if we restrict $\form{a}$ and $j$ to a 
closed subspace $W\subset V$ such that $D_j(\form{a})\subset W$ and $j(W)$ is dense in~$H$.
The corollary to the following proposition shows that this does not happen if $(\form{a},j)$ is associated with an \m-accretive operator.
\begin{proposition}\label{prop:restrict}
Assume $\form{a}$ and $j$ satisfy Conditions~\ref{en:ass-one} and~\ref{en:ass-two}. 
Assume that $j\bigl(D_j(\form{a})\bigr)$ is dense in~$H$.
Let $W\subset V$ be a closed subspace such that $j\bigl(D_j(\form{a})\cap W\bigr)=j\bigl(D_j(\form{a})\bigr)$.
Define $\form{\hat{a}}\coloneqq\restrict{\form{a}}{W\times W}$ and $\hat{j}\coloneqq\restrict{j}{W}$.
Then $(\form{a},j)$ and $(\form{\hat{a}},\hat{j})$ are associated with accretive operators $A$ and $\widehat{A}$, respectively.
Moreover, $\widehat{A}$ is an extension of~$A$.
\end{proposition}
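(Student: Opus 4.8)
The plan is to reduce both association claims to Lemma~\ref{lem:dense-welldef} and then to obtain the extension property by a short density argument. First I would check that $\form{\hat{a}}$ and $\hat{j}$ satisfy Conditions~\ref{en:ass-one} and~\ref{en:ass-two}: continuity and accretivity of $\form{\hat{a}}$ are inherited from $\form{a}$ by restriction, and the density of $\hat{j}(W)$ in $H$ follows from the chain $\hat{j}(W)\supset \hat{j}(D_j(\form{a})\cap W)=j(D_j(\form{a})\cap W)=j(D_j(\form{a}))$, which is dense by hypothesis. The very same chain shows that $j(W)$ is dense in $H$, a fact I shall reuse below.

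Next I would record the elementary inclusion $D_j(\form{a})\cap W\subset D_{\hat{j}}(\form{\hat{a}})$: if $u\in D_j(\form{a})\cap W$ with representing element $f\in H$, then $\form{\hat{a}}(u,v)=\form{a}(u,v)=\scalar{f}{j(v)}_H$ for all $v\in W$, so $u\in D_{\hat{j}}(\form{\hat{a}})$. Applying $\hat{j}=\restrict{j}{W}$ then gives $\hat{j}(D_{\hat{j}}(\form{\hat{a}}))\supset j(D_j(\form{a})\cap W)=j(D_j(\form{a}))$, so $\hat{j}(D_{\hat{j}}(\form{\hat{a}}))$ is dense in $H$. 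Hence Lemma~\ref{lem:dense-welldef} applies to both $(\form{a},j)$ (directly, using the hypothesis on $j(D_j(\form{a}))$) and $(\form{\hat{a}},\hat{j})$, producing the associated accretive operators $A$ and $\widehat{A}$. For the domain inclusion, the same identities yield $D(A)=j(D_j(\form{a}))=j(D_j(\form{a})\cap W)\subset \hat{j}(D_{\hat{j}}(\form{\hat{a}}))=D(\widehat{A})$.

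It then remains to verify $\widehat{A}x=Ax$ for $x\in D(A)$. Given such an $x$, the hypothesis $j(D_j(\form{a})\cap W)=j(D_j(\form{a}))$ lets me pick $u\in D_j(\form{a})\cap W$ with $j(u)=x$. From $u\in D_j(\form{a})$ I obtain $\form{a}(u,v)=\scalar{Ax}{j(v)}_H$ for all $v\in V$, while $u\in D_{\hat{j}}(\form{\hat{a}})$ gives $\form{a}(u,v)=\scalar{\widehat{A}x}{j(v)}_H$ for all $v\in W$. Subtracting these on $W$ yields $\scalar{Ax-\widehat{A}x}{j(v)}_H=0$ for all $v\in W$, and since $j(W)$ is dense in $H$ this forces $Ax=\widehat{A}x$. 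Thus $\widehat{A}$ extends $A$.

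The argument is essentially bookkeeping once the key hypothesis is used, so I do not expect a genuine analytic obstacle; the only points requiring care are to invoke the single-valuedness guaranteed by Proposition~\ref{prop:gen-welldef} when writing $Ax$ and $\widehat{A}x$, and to keep track of the fact that $A$ is characterised by testing against all $v\in V$ whereas $\widehat{A}$ is characterised by testing only against $v\in W$. The crucial use of the hypothesis $j(D_j(\form{a})\cap W)=j(D_j(\form{a}))$ is precisely that it supplies, for each $x\in D(A)$, a representative $u$ lying in $W\cap D_j(\form{a})$, which is what makes both characterisations available for the same $u$.
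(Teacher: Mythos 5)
Your proof is correct and follows essentially the same route as the paper's: both verify Conditions~(I) and~(II) for $(\form{\hat{a}},\hat{j})$ via the hypothesis, establish the inclusion $D_j(\form{a})\cap W\subset D_{\hat{j}}(\form{\hat{a}})$, invoke Lemma~\ref{lem:dense-welldef} for both pairs, and then use the hypothesis to pick, for each $x\in D(A)$, a representative $u\in D_j(\form{a})\cap W$. Your final subtraction-and-density step is just an explicit spelling out of the uniqueness of the representing element that the paper uses implicitly when it concludes $\widehat{A}x=f=Ax$.
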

\begin{proof}
First note that $(\form{a},j)$ is associated with an accretive operator $A$ by Lemma~\ref{lem:dense-welldef}.
Also $\hat{j}$ satisfies Condition~\ref{en:ass-two} since by assumption $j(D_j(\form{a}))\subset j(W)=\rg\hat{j}$.

It is straightforward that $D_j(\form{a})\cap W\subset D_{\hat{j}}(\form{\hat{a}})$. 
Therefore also $(\form{\hat{a}},\hat{j})$ is associated with an accretive operator $\widehat{A}$ by Lemma~\ref{lem:dense-welldef}.
Let $x\in D(A)$ and $Ax=f$. Then there exists a $u\in D_j(\form{a})\cap W\subset D_{\hat{j}}(\form{\hat{a}})$ such that $j(u)=x$, and one obtains
\[
    \form{\hat{a}}(u,v) = \form{a}(u,v)=\scalar{f}{j(v)}_H=\scalar[b]{f}{\hat{j}(v)}_H
\]
for all $v\in W$.
This shows that $\widehat{A}$ is an extension of~$A$.
\end{proof}

\begin{corollary}\label{cor:macc-restrict}
Assume $\form{a}$ and $j$ satisfy Conditions~\ref{en:ass-one} and~\ref{en:ass-two}. 
Suppose $(\form{a},j)$ is associated with an \m-accretive operator~$A$.
Let $W\subset V$ be a closed subspace such that $j\bigl(D_j(\form{a})\cap W\bigr)=j\bigl(D_j(\form{a})\bigr)$.
Define $\form{\hat{a}}\coloneqq\restrict{\form{a}}{W\times W}$ and $\hat{j}\coloneqq\restrict{j}{W}$.
Then the operator $\widehat{A}$ associated with $(\form{\hat{a}},\hat{j})$ is equal to~$A$.
\end{corollary}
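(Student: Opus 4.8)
The plan is to deduce the corollary from Proposition~\ref{prop:restrict} together with the maximality property of \m-accretive operators, so that essentially no new computation is needed. First I would verify that the one additional hypothesis of Proposition~\ref{prop:restrict}---namely that $j\bigl(D_j(\form{a})\bigr)$ is dense in~$H$---is automatically available here. Indeed, since $A$ is \m-accretive it is densely defined (as recorded after Phillips' theorem), and by Proposition~\ref{prop:gen-welldef}\,\ref{en:part-ex:op} one has $D(A)=j\bigl(D_j(\form{a})\bigr)$; hence this set is dense in~$H$, as required.

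With the hypotheses of Proposition~\ref{prop:restrict} thus in place, that proposition yields directly that both $(\form{a},j)$ and $(\form{\hat{a}},\hat{j})$ are associated with accretive operators $A$ and $\widehat{A}$, and that $\widehat{A}$ is an extension of~$A$, i.e.\ $A\subset\widehat{A}$.

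Finally I would invoke maximality to upgrade this extension to an equality. By Proposition~\ref{prop:macc-maxacc}, the \m-accretive operator $A$ is maximal accretive. Since $\widehat{A}$ is accretive and satisfies $A\subset\widehat{A}$, the definition of maximal accretivity forces $\widehat{A}=A$, which is the assertion of the corollary.

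I do not expect a genuine obstacle in this argument; it is short and largely a matter of combining earlier results. The only point deserving a moment's care is recognising that the density hypothesis of Proposition~\ref{prop:restrict} is not an extra assumption but comes for free from the \m-accretivity of~$A$ via dense definedness. This is precisely what makes the corollary sharper than the proposition: the conclusion is strengthened from ``$\widehat{A}$ extends $A$'' to ``$\widehat{A}$ equals $A$'', because \m-accretivity leaves no room for a proper accretive extension.
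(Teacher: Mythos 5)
Your proposal is correct and follows exactly the paper's own argument: the density hypothesis of Proposition~\ref{prop:restrict} comes for free since the \m-accretive operator $A$ is densely defined with $D(A)=j\bigl(D_j(\form{a})\bigr)$, the proposition then gives that $\widehat{A}$ is an accretive extension of~$A$, and maximality of $A$ (via Proposition~\ref{prop:macc-maxacc}) forces $\widehat{A}=A$. No gaps; this is the same proof as in the paper.
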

\begin{proof}
Proposition~\ref{prop:restrict} is applicable since $j(D_j(\form{a}))=D(A)$ is dense in~$H$.
Hence $\widehat{A}$ is an accretive extension of~$A$.
It follows from the maximality of $A$ that $\widehat{A}=A$.
\end{proof}

\begin{remark}
\begin{parenum}[1.]
\item
If $W$ is a closed subspace of $V$ such that $D_j(\form{a})\subset W$, then the condition 
$j\bigl(W\cap D_j(\form{a})\bigr)=j\bigl(D_j(\form{a})\bigr)$ is clearly satisfied.

\item 
We note that in Proposition~\ref{prop:restrict} one may choose $W\coloneqq\clos{D_j(\form{a})\cap(\ker T)^\perp}$,
provided $j(D_j(\form{a}))$ is dense in~$H$.
In fact, it follows from Proposition~\ref{prop:Tinjective}\,\ref{en:res-ident} applied for the decomposition $V=(\ker T)^\perp\oplus\ker T$ that $j(D_j(\form{a})\cap(\ker T)^\perp) = j(D_j(\form{a}))$.
\end{parenum}
\end{remark}

The following example shows that the operator $\widehat{A}$ in Proposition~\ref{prop:restrict}
can indeed be a proper extension of $A$, even if $j$ is injective and $W$ is the closure of $D_j(\form{a})$ in~$V$.
In the construction of the example we rely on the following lemma.
\begin{lemma}\label{lem:op-dense-restrict}
Let $A\ge I$ be an unbounded self-adjoint operator in a Hilbert space~$H$.
Suppose $f\in H\setminus D(A^{1/2})$.
Equip $V\coloneqq D(A^{1/2})$ with the inner product $\scalar{x}{y}_V=\scalar{A^{1/2}x}{A^{1/2}y}_H$. 
Then the set
\[
    \{ x\in D(A): \scalar{Ax}{f}_H=0 \}
\]
is dense in~$V$.
\end{lemma}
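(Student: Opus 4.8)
The plan is to prove density by showing that the orthogonal complement in $V$ of
\[
    S:=\set{x\in D(A)}{\scalar{Ax}{f}_H=0}
\]
is trivial. First I would collect the structural facts coming from $A\ge I$. The square root $A^{1/2}$ is self-adjoint with bounded inverse $A^{-1/2}$, the range of $A$ is all of $H$ (as $0\in\rho(A)$), and $A^{1/2}\colon V\to H$ is unitary: it is isometric by the very definition of $\scalar{\cdot}{\cdot}_V$ and surjective because $A^{1/2}$ is invertible. In particular $A^{1/2}(D(A))=D(A^{1/2})$ is dense in $H$, so $D(A)$ is dense in $V$. The identity I will use repeatedly is
\[
    \scalar{x}{y}_V=\scalar{A^{1/2}x}{A^{1/2}y}_H=\scalar{Ax}{y}_H \qquad(x\in D(A),\ y\in V),
\]
where moving one factor $A^{1/2}$ across is legitimate since $A^{1/2}x\in D(A^{1/2})$ and $y\in D(A^{1/2})$.

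Next I would fix $y\in V$ with $y\perp_V S$ and introduce the two linear functionals on $D(A)$ given by $\phi(x)=\scalar{Ax}{f}_H$ and $\psi(x)=\scalar{x}{y}_V=\scalar{Ax}{y}_H$. By construction $S=\ker\phi$, and the orthogonality hypothesis says exactly that $\psi$ vanishes on $\ker\phi$. Since $f\ne0$ (note $0\in D(A^{1/2})$ but $f\notin D(A^{1/2})$) and $\rg A=H$, the functional $\phi$ is not identically zero; hence $\psi=c\,\phi$ for some $c\in\CC$ by the elementary fact that a linear functional vanishing on the kernel of a nonzero linear functional is a scalar multiple of it. Written out, this means $\scalar{Ax}{y}_H=c\,\scalar{Ax}{f}_H$ for all $x\in D(A)$. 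Using $\rg A=H$ once more, I get $\scalar{g}{y}_H=c\,\scalar{g}{f}_H$ for every $g\in H$, and therefore $y=\overline{c}\,f$.

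The punchline, and the only place where the hypothesis $f\notin D(A^{1/2})$ is used, is the final comparison: $y$ lies in $V=D(A^{1/2})$ whereas $f$ does not, so $y=\overline{c}\,f$ forces $\overline{c}=0$ and hence $y=0$. This gives $S^\perp=\{0\}$ in $V$, i.e.\ $S$ is dense in $V$. I expect the main obstacle to be the bookkeeping of domains rather than any deep idea: the whole argument hinges on carrying out the symmetry manipulation $\scalar{Ax}{y}_H=\scalar{x}{y}_V$ only for $x\in D(A)$ and $y\in V$ — so that $f$ never has to be paired through $A^{1/2}$ — and on checking that $\phi\ne0$ so that the proportionality of functionals applies.
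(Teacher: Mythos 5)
Your proof is correct and follows essentially the same route as the paper's: both reduce density to showing that the orthogonal complement of the set in $V$ is trivial, using the identity $\scalar{x}{v}_V=\scalar{Ax}{v}_H$ for $x\in D(A)$ and $v\in V$, the surjectivity of $A$, and the punchline $\linspan\{f\}\cap D(A^{1/2})=\{0\}$. The only cosmetic difference is that you obtain ``$v$ is a multiple of $f$'' via proportionality of linear functionals with nested kernels, whereas the paper writes $A(W)=\{f\}^\perp$ and takes the orthogonal complement in $H$.
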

\begin{proof}
Let $W := \{ x\in D(A): \scalar{Ax}{f}_H=0 \}$.
It follows from the assumptions that $A$ is surjective. Therefore $A(W)=\{f\}^\perp$ in~$H$.
Let $v$ be in the orthogonal complement of $W$ in~$V$. Then
$\scalar{Ax}{v}_H = \scalar{x}{v}_V = 0$
for all $x\in W$. So $v\in \linspan\{f\}$, the orthogonal complement of $A(W)$ in~$H$.
Thus $v=0$, and consequently $W$ is dense in~$V$.
\end{proof}

\begin{example}
Define an unbounded self-adjoint operator $\widehat{A}\ge I$ in a separable Hilbert space $H$
by taking the countable disjoint sum of an operator as in Lemma~\ref{lem:op-dense-restrict}.
Equip $V_1\coloneqq D(\widehat{A}^{1/2})$ with the inner product $\scalar{u}{v}_{V_1}=\scalar[b]{\widehat{A}^{1/2}u}{\widehat{A}^{1/2}v}$.
Then there exists a closed infinite-dimensional subspace $H_2$ of $H$ with $V_1\cap H_2=\{0\}$ such that
the set
\[
    D\coloneqq \{ x\in D(\widehat{A}): \widehat{A}x\in H_2^\perp\}
\]
is dense in~$V_1$.

Let $V_2$ and $\form{a}_2$ be given as in Example~\ref{ex:min-DHa}.\ref{en:min-DHa-PQ} for the Hilbert space~$\Ltwo[\RR]$. 
Since $H_2$ and $\Ltwo[\RR]$ are isometrically isomorphic, we may identify $H_2$ and $\Ltwo[\RR]$ and assume that $j_2\colon V_2\to H_2$ is the inclusion.
Then $D_{j_2}(\form{a}_2)=\{0\}$.
Let $V=V_1\times V_2$. Define $j\colon V\to H$ by $j(u_1,u_2)=u_1+u_2$. Define $\form{a}\colon V\times V\to C$ by
\[
    \form{a}((u_1,u_2),(v_1,v_2)) = \scalar[b]{\widehat{A}^{1/2}u_1}{\widehat{A}^{1/2}v_1}_H + \form{a}_2(u_2,v_2).
\]
Then $\form{a}$ and $j$ satisfy Conditions~\ref{en:ass-one} and~\ref{en:ass-two}. Moreover, $j$ is injective.
Therefore $(\form{a},j)$ is associated with an accretive operator~$A$.

We determine $D_j(\form{a})$. Suppose $u\in D_j(\form{a})$ and let $f\in H$ be such that $\form{a}(u,v)=\scalar{f}{j(v)}_H$ for all $v\in V$.
On the one hand, choosing $v_2=0$ yields $u_1\in D(\widehat{A})$ and $\widehat{A} u_1=f$.
On the other hand, by choosing $v_1=0$, we obtain 
\[
    \form{a}_2(u_2,v_2) = \scalar{f}{v_2}_H = \scalar[b]{\widehat{A}u_1}{Pv_2}_H=\scalar[b]{P\widehat{A} u_1}{v_2}_{H_2}
\]
for all $v_2\in V_2$, 
where $P$ is the orthogonal projection onto $H_2$ in~$H$.
Hence $u_2\in D_{j_2}(\form{a}_2)=\{0\}$ and $P\widehat{A}u_1=0$. This shows that $u\in D\times \{0\}$.
Conversely, assume that $u\in D\times\{0\}$. Then
\[
    \form{a}((u_1,0),(v_1,v_2)) = \scalar[b]{\widehat{A}^{1/2}u_1}{\widehat{A}^{1/2}v_1}_H 
    = \scalar[b]{\widehat{A} u_1}{v_1}_H + \scalar[b]{P \widehat{A} u_1}{v_2}_H = \scalar[b]{\widehat{A} u_1}{v_1+v_2}_H
\]
for all $v=(v_1,v_2)\in V$. So $u\in D_j(\form{a})$. We have established that $D_j(\form{a})=D\times\{0\}$.

Let $A$ be the operator associated with $(\form{a},j)$. By construction, $D_j(\form{a})$ is dense in $W\coloneqq V_1\times\{0\}$. 
This implies that $D(A)$ is dense.
Let $\form{\hat{a}}=\restrict{\form{a}}{W\times W}$ and $\hat{j}=\restrict{j}{W}$.
Then $\widehat{A}$ is associated with $(\form{\hat{a}},\hat{j})$. The operator $\widehat{A}$ is
an extension of $A$ by Proposition~\ref{prop:restrict}. Note, however, that $D(A)$ is a proper subset of $D(\widehat{A})$ since $\rg \widehat{A} = H$, whereas $\rg A=H_2^\perp$.
\end{example}

We close this section with another example.
It shows that in the setting of Proposition~\ref{prop:restrict} one cannot expect
to have any monotonicity of the domain of $\widehat{A}$ with respect to the choice of~$W$.
\begin{example}
Let $\widehat{A}\ge I$ be an unbounded self-adjoint operator in a Hilbert space~$H$.
Equip $W\coloneqq D(\widehat{A})$ with the inner product $\scalar{u}{v}_W=\scalar[b]{\widehat{A}u}{\widehat{A}v}_H$.
Let $w\in D(\widehat{A})\setminus D(\widehat{A}^2)$ be such that $\norm{w}_W=1$.
Then 
\[
    W_1\coloneqq\{u\in D(\widehat{A}): \scalar[b]{\widehat{A} u}{\widehat{A} w}_H=0\}
\] 
is dense in $H$, which follows similarly as in the proof of Lemma~\ref{lem:op-dense-restrict}.
Moreover, we have the orthogonal decomposition $W=W_1\oplus\linspan\{w\}$.

Set $V\coloneqq \CC\times W$ and define $j\colon V\to H$ by $j(\alpha,u)=u$.
Define the form $\form{a}\colon V\times V\to\CC$ by
\[
    \form{a}((\alpha,u),(\beta,v)) = \scalar[b]{\widehat{A} u}{v}_H + \alpha\scalar{w}{v}_W - \conj{\beta}\scalar{u}{w}_W.
\]
Then $\form{a}$ and $j$ satisfy Conditions~\ref{en:ass-one} and~\ref{en:ass-two}. 

We determine $D_j(\form{a})$. Let $(\alpha,u)\in D_j(\form{a})$ and $f\in H$ be such that 
$\form{a}((\alpha,u),(\beta,v)) = \scalar{f}{j(\beta,v)}_H$ for all $(\beta,v)\in V$.
The choice $\beta=0$ and $v\in W_1$ yields $f=\widehat{A} u$.
Moreover, if $\beta=0$ and $v=w$, then
\[
    \scalar[b]{\widehat{A} u}{w}_H + \alpha\norm{w}_W^2 = \form{a}((\alpha,u),(0,w)) = \scalar{f}{w}_H = \scalar[b]{\widehat{A} u}{w}_H.
\]
Therefore $\alpha=0$. Furthermore, the choice $\beta=1$ and $v=0$ implies $\scalar{u}{w}_W=0$, i.e., $u\in W_1$.
Conversely, if $u\in W_1$, then
\[
    \form{a}((0,u),(\beta,v)) = \scalar[b]{\widehat{A} u}{v}_H - \conj{\beta}\scalar{u}{w}_W = \scalar[b]{\widehat{A} u}{j(\beta,v)}_H
\]
for all $(\beta,v)\in V$. Therefore $D_j(\form{a})=\{0\}\times W_1$. Note that $j(D_j(\form{a}))=W_1$ is dense in~$H$.
Therefore $(\form{a},j)$ is associated with an accretive operator $A$,
namely, $A$ is the (proper) restriction of $\widehat{A}$ to~$W_1$.

For simplicity, we now consider both $W_1$ and $W$ directly as closed subspaces of~$V$. 
Recall that in this example $D_j(\form{a})=W_1\subset W$. 
So Proposition~\ref{prop:restrict} applies to the restrictions of $(\form{a},j)$ to both $W_1$ or~$W$.
Let $\form{a}_1\coloneqq\restrict{\form{a}}{W_1\times W_1}$, $\form{\hat{a}}\coloneqq\restrict{\form{a}}{W\times W}$, $j_1\coloneqq \restrict{j}{W_1}$ and $\hat{j}\coloneqq\restrict{j}{W}$.
Then $A$ is associated with $(\form{a}_1,j_1)$, 
the self-adjoint operator $\widehat{A}$ is associated with $(\form{\hat{a}},\hat{j})$ 
and $A$ is again associated with $(\form{a},j)$, 
despite $W_1\subset W\subset V$ and $D_j(\form{a})=W_1$. In the $j$-elliptic case this cannot happen, see also Corollary~\ref{cor:macc-restrict}.
Finally, note that $\hat{j}$ is injective and $W = D_{\hat{j}}(\form{\hat{a}})$ is the domain of $\form{\hat{a}}$.
So Proposition~\ref{prop:restrict} cannot be applied to any proper restriction of $(\form{\hat{a}},\hat{j})$.
\end{example}

\section{The class of accretive operators associated with an accretive form}\label{sec:gen-nondense}

Example~\ref{ex:dd-acc-op} shows that all densely defined, closed, accretive operators on a Hilbert space $H$ are
associated with some accretive form in the sense of Proposition~\ref{prop:gen-welldef}.
It is natural to ask if the same holds without the assumptions that the operator be densely defined or closed.
Moreover, we are interested in whether it can be arranged that the form domain is continuously embedded into~$H$.

\begin{definition}
Let $A$ be an operator in a Hilbert space~$H$.
We say that $A$ can be \emphdef{generated by an accretive form} if there exists a Hilbert space $V$, 
a linear map $j\colon V\to H$ and a form $\form{a}\colon V\times V\to\CC$ such that
$\form{a}$ and $j$ satisfy Conditions~\ref{en:ass-one} and~\ref{en:ass-two},
and such that $A$ is associated with $(\form{a},j)$.
If $j$ can be chosen to be injective, we say that $A$ can be \emphdef{generated by an embedded accretive form}.
\end{definition}

In this section we characterise which accretive operators can be generated by accretive forms.
Moreover, we provide examples of operators that cannot be generated.
The following notion turns out to be essential.
\begin{definition}
A subspace $\mathcal{R}$ of $H$ is an \emphdef{operator range} in $H$ if there exists an operator $R\in\Linop(H)$ such
that $\mathcal{R} = \rg R$.
\end{definition}
For an introduction to operator ranges we recommend~\cite{FW71:op-rg},
where also various equivalent characterisations are given. 
We collect some properties of operator ranges that we will require later on.
\begin{lemma}\label{lem:op-ran}
Let $H$ and $K$ be Hilbert spaces.
\begin{enumerate}[\upshape (a)]
\labelformat{enumi}{\textup{(#1)}}
\item\label{en:op-ran-lat}
The operator ranges in $H$
form a lattice with respect to taking sums and intersections of subspaces.
\item\label{en:op-ran-embed}
A subspace $\mathcal{R}$ of $H$ is an operator range in $H$ if and only if $\mathcal{R}$ can be
given a Hilbert space structure such that it is continuously embedded into~$H$.
\item\label{en:op-ran-comp}
Let $R\colon H\supset D(R)\to K$ be a closed operator.
Then $D(R)$ is an operator range in $H$ and
$\rg R$ is an operator range in~$K$.
Moreover, if $\mathcal{R}$ is an operator range in $H$, then $R(\mathcal{R}\cap D(R))$
is an operator range in~$K$.
\end{enumerate}
\end{lemma}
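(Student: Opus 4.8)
The plan is to establish the three claims of Lemma~\ref{lem:op-ran} in sequence, leaning on the standard theory of operator ranges (in particular the characterisation in~\cite{FW71:op-rg}) and on the closed graph theorem. The core technical device throughout will be part~\ref{en:op-ran-embed}, so I would prove that first and then use it to obtain parts~\ref{en:op-ran-lat} and~\ref{en:op-ran-comp} with minimal extra work.

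For part~\ref{en:op-ran-embed}, suppose first that $\mathcal{R}=\rg R$ for some $R\in\Linop(H)$. Let $N=\ker R$ and let $\tilde{R}$ be the injective operator induced by $R$ on $N^\perp$, so that $\tilde{R}\colon N^\perp\to\mathcal{R}$ is a bijection. I would transport the Hilbert space structure of $N^\perp$ to $\mathcal{R}$ via $\tilde{R}$, i.e.\ define $\scalar{x}{y}_{\mathcal{R}}=\scalar{\tilde{R}^{-1}x}{\tilde{R}^{-1}y}_H$; then $\mathcal{R}$ is complete because $N^\perp$ is, and the embedding $\mathcal{R}\hookrightarrow H$ has norm $\norm{R}$ since $\norm{x}_H=\norm{\tilde{R}\tilde{R}^{-1}x}_H\le\norm{R}\,\norm{x}_{\mathcal{R}}$. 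Conversely, if $\mathcal{R}$ carries a Hilbert space structure with continuous inclusion $i\colon\mathcal{R}\to H$, then $R\coloneqq i i^*\in\Linop(H)$ has range contained in $\mathcal{R}$; to see that the range is exactly $\mathcal{R}$ one checks that $i^*$ has dense range in $\mathcal{R}$ (as $\ker i^*=(\rg i)^\perp$, which is trivial on the closure of $\rg i$ relevant here) and uses that $i i^*$ and $i^* i$ have matching ranges up to the isomorphism $i$. I expect this converse direction to be the main obstacle, since one must argue carefully that $\rg(i i^*)=\rg i=\mathcal{R}$ rather than merely a dense subspace; the clean way is to observe that $i^*i$ is a positive self-adjoint operator on $\mathcal{R}$ with dense range, so $\rg(i i^*)=i(\rg(i^*i))=i(\mathcal{R})=\mathcal{R}$.

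For part~\ref{en:op-ran-lat}, I would use part~\ref{en:op-ran-embed} to reduce lattice statements to constructions on Hilbert spaces. Given operator ranges $\mathcal{R}_1,\mathcal{R}_2$ with embedded Hilbert norms, the sum $\mathcal{R}_1+\mathcal{R}_2$ is the range of the map $(x_1,x_2)\mapsto i_1(x_1)+i_2(x_2)$ from $\mathcal{R}_1\times\mathcal{R}_2$ into $H$, which is bounded; by part~\ref{en:op-ran-embed} (applied in the guise $\mathcal{R}=\rg R$) this is again an operator range. For the intersection, I would equip $\mathcal{R}_1\cap\mathcal{R}_2$ with the norm $\norm{x}^2=\norm{x}_{\mathcal{R}_1}^2+\norm{x}_{\mathcal{R}_2}^2$, verify completeness (a Cauchy sequence converges in each $\mathcal{R}_i$, hence in $H$ to a common limit lying in both), and note the inclusion into $H$ is continuous, so part~\ref{en:op-ran-embed} applies again.

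For part~\ref{en:op-ran-comp}, the domain $D(R)$ of a closed operator, equipped with the graph norm, is a Hilbert space continuously embedded in $H$, so it is an operator range by part~\ref{en:op-ran-embed}. The range $\rg R$ is the image of the Hilbert space $(D(R),\text{graph norm})$ under the bounded map $R$, hence an operator range in $K$ (again via the image characterisation from the first direction of part~\ref{en:op-ran-embed}). Finally, if $\mathcal{R}$ is an operator range in $H$ with embedded Hilbert structure, then $\mathcal{R}\cap D(R)$ is an operator range by part~\ref{en:op-ran-lat}, and $R$ restricted to it is bounded from $(\mathcal{R}\cap D(R),\text{combined norm})$ into $K$, so $R(\mathcal{R}\cap D(R))$ is an operator range by the same image argument. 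I anticipate these verifications to be routine once part~\ref{en:op-ran-embed} is in hand; the only care needed is to confirm at each step that the constructed norm is complete and the inclusion genuinely bounded.
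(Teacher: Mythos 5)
Your overall architecture --- prove part~(b) first and funnel parts~(a) and~(c) through it --- is legitimate, and it is a genuinely self-contained alternative to the paper's proof, which simply cites \cite[Theorem~1.1, Theorem~2.2]{FW71:op-rg} for (a) and (b) and combines (a) with the closed graph theorem for (c). However, the step you yourself flagged as the main obstacle, the converse direction of (b), is wrong as written. If $i\colon\mathcal{R}\to H$ is the bounded injective inclusion of the Hilbert space $\mathcal{R}$, then in general $\rg(ii^*)\subsetneq\mathcal{R}$. Two things break: first, the identity $\rg(ii^*)=i\bigl(\rg(i^*i)\bigr)$ is false (one has $\rg(ii^*)=i(\rg i^*)$, whereas $i\bigl(\rg(i^*i)\bigr)=(ii^*)(\rg i)$, which is in general strictly smaller); second, and more seriously, the inference ``$i^*i$ is positive self-adjoint with dense range, so $\rg(i^*i)=\mathcal{R}$'' conflates dense range with surjectivity --- the positive operator $i^*i$ is surjective only when it is invertible, i.e.\ only when $i$ is bounded below, i.e.\ only when $\mathcal{R}$ is closed in $H$. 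A concrete counterexample: let $H=\ell_2$, let $D\in\Linop(H)$ be the diagonal operator $De_n=\tfrac{1}{n}e_n$, and let $\mathcal{R}=\rg D$ with $\scalar{x}{y}_{\mathcal{R}}=\scalar{D^{-1}x}{D^{-1}y}_H$. A short computation gives $i^*h=D^2h$, hence $ii^*=D^2$ as an operator on $H$, and $\rg(ii^*)=\rg D^2$ is a proper subspace of $\mathcal{R}=\rg D$: for instance $(n^{-2})_{n\in\NN}=D\bigl((n^{-1})_n\bigr)\in\rg D$, but it does not lie in $\rg D^2$.

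The repair is standard but is exactly the nontrivial content of \cite[Theorem~1.1]{FW71:op-rg}: replace $ii^*$ by its square root. By the polar decomposition, every $T\in\Linop(K,H)$ satisfies $\rg T=\rg\bigl((TT^*)^{1/2}\bigr)$; applied to $T=i$ this gives $\mathcal{R}=\rg i=\rg\bigl((ii^*)^{1/2}\bigr)$ with $(ii^*)^{1/2}\in\Linop(H)$, as required (in the example above, $(ii^*)^{1/2}=D$, whose range is indeed $\mathcal{R}$). Note also that parts~(a) and~(c) of your proposal use not literally (b) but the slightly stronger statement that the range of a bounded operator from an arbitrary Hilbert space $K$ into $H$ is an operator range in $H$; this follows either by transporting the Hilbert structure of $(\ker T)^\perp$ onto $\rg T$ and invoking the corrected converse of (b), or directly from $\rg T=\rg\bigl((TT^*)^{1/2}\bigr)$. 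With these repairs, your completeness checks for the intersection norm and the graph norm, and your boundedness checks for the sum map and for $R$ restricted to $\mathcal{R}\cap D(R)$, all go through, and you obtain a self-contained proof where the paper relies on citation.
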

\begin{proof}
\begin{parenum}
\item[\ref{en:op-ran-lat}] This is a consequence of~\cite[Theorem~2.2 and the following Corollary 2]{FW71:op-rg}.
\item[\ref{en:op-ran-embed}] See~\cite[Theorem~1.1]{FW71:op-rg}. 
\item[\ref{en:op-ran-comp}]
It is readily observed that the characterisations in~\cite[Theorem~1.1]{FW71:op-rg} extend to 
operators between two possibly different Hilbert spaces, which implies the first statement.
In particular, $\mathcal{R}\cap D(R)$ is an operator range in $H$ by~\ref{en:op-ran-lat}.
So the second statement follows from the fact that the composition
of a closed and a bounded operator is closed.\qedhere
\end{parenum}
\end{proof}

Our main result in this section is the following.
\begin{theorem}\label{thm:gen-general}
Let $A$ be an accretive operator in a Hilbert space~$H$.
Then $A$ can be generated by an accretive form if and only if $\rg(I+A)$ is an operator range.
Moreover, if the orthogonal complement of $D(A)$ in $H$ is zero or infinite dimensional, then $A$ can be generated by an embedded accretive form.
\end{theorem}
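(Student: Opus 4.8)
The plan is to prove both implications, with the forward direction a short computation and the converse an explicit construction in three cases.

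For the forward direction, suppose $A$ is associated with $(\form{a},j)$; since $j(V)$ is dense, $j^*$ is injective. Combining Proposition~\ref{prop:gen-complete-helper} with Lemma~\ref{lem:DHa-formula} (which gives $T(D_j(\form{a}))=T(T^{-1}[\rg j^*])=\rg T\cap\rg j^*$) yields, for $f\in H$, that $f\in\rg(I+A)$ if and only if $j^*f\in\rg T$, i.e. $\rg(I+A)=(j^*)^{-1}[\rg T]$. As $T\in\Linop(V)$, $\rg T$ is an operator range in $V$ by Lemma~\ref{lem:op-ran}\,\ref{en:op-ran-comp}, so it remains to observe that the preimage of an operator range under a bounded operator is again an operator range. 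Writing $\rg T=\rg\iota$ for a continuous injection $\iota$ of a Hilbert space $V_0$ into $V$ (Lemma~\ref{lem:op-ran}\,\ref{en:op-ran-embed}), the kernel $M$ of $(f,g)\mapsto j^*f-\iota g$ on $H\oplus V_0$ is a Hilbert space, and the first-coordinate projection $M\to H$ is bounded with range exactly $(j^*)^{-1}[\rg T]$; Lemma~\ref{lem:op-ran}\,\ref{en:op-ran-comp} then gives that this is an operator range in $H$.

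For the converse I first record that $I+A$ is injective (as $A$ is accretive) and that $(I+A)^{-1}$ is an $H$-contraction on $\rg(I+A)$. Equipping $W\coloneqq\rg(I+A)$ with its operator-range Hilbert norm (Lemma~\ref{lem:op-ran}\,\ref{en:op-ran-embed}), both $S\coloneqq(I+A)^{-1}\colon W\to H$ and the inclusion $k\colon W\to H$ are bounded and injective, with $\rg S=D(A)$, $\rg k=\rg(I+A)$ and $AS=k-S$. If $D(A)$ is dense (the case $D(A)^\perp=\{0\}$), I take $V\coloneqq W$, $j\coloneqq S$ and $\form{a}(u,v)\coloneqq\scalar{ASu}{Sv}_H$; this is continuous and accretive, $j$ is injective with dense range, and since $\scalar{ASu}{Sv}_H=\scalar{ASu}{j(v)}_H$ one checks $D_j(\form{a})=V$ with associated operator $A$, an embedded generation.

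In general set $N\coloneqq\clos{D(A)}$, $K\coloneqq D(A)^\perp$ with orthogonal projections $P_N,P_K$. When $K\ne\{0\}$ I choose a degenerate gadget $(V_2,\form{a}_2,j_2)$, namely a continuous accretive form with $j_2\colon V_2\to K$ of dense range and $D_{j_2}(\form{a}_2)=\{0\}$: Lemma~\ref{lem:min-DHa} furnishes one with injective $j_2$ when $K$ is infinite-dimensional, and a finite direct sum of copies of Example~\ref{ex:welldef-nonmacc} furnishes one (necessarily with noninjective $j_2$) when $K$ is finite-dimensional. I then set $V\coloneqq W\times V_2$, $j(u,u_2)\coloneqq Su+j_2(u_2)$, and
\[
\form{a}((u,u_2),(v,v_2)) \coloneqq \scalar{ASu}{Sv}_H + \form{a}_2(u_2,v_2) + \scalar{ASu}{j_2 v_2}_H - \scalar{j_2 u_2}{ASv}_H .
\]
The two cross terms are skew-Hermitian, hence do not affect $\Re\form{a}$, so $\form{a}$ is accretive; $j$ has dense range because $\clos{\rg S}=N$ and $\clos{\rg j_2}=K$. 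Testing the defining relation of $D_j(\form{a})$ against the two factors separately forces $u_2\in D_{j_2}(\form{a}_2)=\{0\}$ and then $P_Kf=P_K ASu$ and $P_Nf=P_N ASu$; thus $D_j(\form{a})=W\times\{0\}$ with $f=ASu$, and the associated operator is again $A$. Since $j$ is injective precisely when $j_2$ is, this settles the ``moreover'' ($K=\{0\}$ from the dense case, $K$ infinite-dimensional from the Lemma~\ref{lem:min-DHa} gadget).

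The main obstacle is the converse. Reproducing $A$ exactly, rather than $P_N A$, forces the cross terms coupling $W$ to the padding space $V_2$, since the $K$-component of $\rg A$ is otherwise invisible to the form; the delicate point is that these couplings must be inserted skew-Hermitically to preserve accretivity, while the gadget property $D_{j_2}(\form{a}_2)=\{0\}$ is exactly what prevents them from enlarging $D_j(\form{a})$ and hence the domain of the associated operator. A subsidiary difficulty is the existence of such a gadget for finite-dimensional $K$, where the finite-dimensional observation in Remark~\ref{rem:gen-complete} rules out an injective $j_2$ and so compels the use of an infinite-dimensional $V_2$ as in Example~\ref{ex:welldef-nonmacc}.
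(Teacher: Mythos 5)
Your proposal is correct and follows essentially the same route as the paper: your construction with $S=(I+A)^{-1}$ and $AS=k-S$ is exactly the paper's Phillips--Cayley-transform construction in disguise (there $I+J=2(I+A)^{-1}$ and $I-J=2A(I+A)^{-1}$, so your form and $j$ agree with the paper's up to factors of $2$), including the same operator-range Hilbert structure on $\rg(I+A)$, the same degenerate gadget on $D(A)^\perp$ drawn from Lemma~\ref{lem:min-DHa} and Example~\ref{ex:welldef-nonmacc}, and the same skew-Hermitian coupling terms. Your forward direction is likewise the paper's Proposition~\ref{prop:oprange-cond} argument, merely phrased via preimages under $j^*$ (with a standard graph trick) instead of the paper's reduction to injective $T$.
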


The necessity of the condition on $\rg(I+A)$ is shown in the following proposition.
\begin{proposition}\label{prop:oprange-cond}
Let $A$ be an accretive operator in a Hilbert space~$H$. Suppose that $A$ can be generated by an accretive form.
Then $D(A)$, $\rg(I+A)$ and $\ker A$ are operator ranges in~$H$.
\end{proposition}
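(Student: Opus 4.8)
The plan is to show that each of the three sets $D(A)$, $\rg(I+A)$ and $\ker A$ is an operator range in $H$ by exhibiting it as the image or kernel-related range of some closed operator and invoking Lemma~\ref{lem:op-ran}. By assumption $A$ is generated by an accretive form, so there exist a Hilbert space $V$, a map $j\in\Linop(V,H)$ and a form $\form{a}$ satisfying Conditions~\ref{en:ass-one} and~\ref{en:ass-two} such that $A$ is associated with $(\form{a},j)$; in particular the equivalent conditions of Proposition~\ref{prop:gen-welldef} hold, and we have the identity $j^*(I+A)j(u)=Tu$ from~\eqref{eq:rel-T-IpA} together with $D(A)=j(D_j(\form{a}))$.

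First I would handle $D(A)$. Since $D(A)=j(D_j(\form{a}))$ and, by Lemma~\ref{lem:DHa-formula}, $D_j(\form{a})=T^{-1}[\rg j^*]$, the domain is the image under the bounded operator $j$ of the preimage $T^{-1}[\rg j^*]$. The space $\rg j^*$ is an operator range in $V$ by Lemma~\ref{lem:op-ran}\,\ref{en:op-ran-comp} (as $j^*$ is bounded, hence closed), and $T^{-1}[\rg j^*]=D_j(\form{a})$ is then an operator range in $V$ because $T$ is bounded and preimages of operator ranges under bounded operators are operator ranges (this follows from writing $D_j(\form{a})$ as the domain of the closed operator $u\mapsto (\text{the representative of }Tu\text{ in the Hilbert space }\rg j^*)$, or directly from Lemma~\ref{lem:op-ran}). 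Applying the bounded map $j$ to this operator range and using Lemma~\ref{lem:op-ran}\,\ref{en:op-ran-comp} once more yields that $D(A)=j(D_j(\form{a}))$ is an operator range in $H$.

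Next I would treat $\rg(I+A)$. By Proposition~\ref{prop:gen-complete-helper}, for $f\in H$ one has $f\in\rg(I+A)$ if and only if $j^*f\in T(D_j(\form{a}))$; equivalently, $\rg(I+A)=(j^*)^{-1}[T(D_j(\form{a}))]$. Now $T(D_j(\form{a}))$ is the image of the operator range $D_j(\form{a})$ under the bounded operator $T$, hence an operator range in $V$ by Lemma~\ref{lem:op-ran}\,\ref{en:op-ran-comp}; taking its preimage under the bounded operator $j^*$ gives an operator range in $H$ by the same preimage argument as above. For $\ker A$, I would observe that $\ker A=\ker A\cap D(A)$ consists of those $x\in D(A)$ with $Ax=0$, which by the associated-operator description corresponds to $u\in D_j(\form{a})$ with $T_0u=0$, i.e.\ $\ker A=j(D_j(\form{a})\cap\ker T_0)$; since $\ker T_0$ is closed and $D_j(\form{a})$ is an operator range, their intersection is an operator range by the lattice property in Lemma~\ref{lem:op-ran}\,\ref{en:op-ran-lat} (a closed subspace being trivially an operator range), and its image under $j$ is again an operator range.

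The main obstacle I anticipate is the clean handling of \emph{preimages} of operator ranges under bounded operators, since Lemma~\ref{lem:op-ran} as stated is phrased in terms of \emph{images} and the lattice operations. I would resolve this by the standard device: if $\mathcal{R}=\rg R$ is an operator range in $K$ with $R\in\Linop(K)$ and $S\in\Linop(H,K)$, then $S^{-1}[\mathcal{R}]$ is the domain of the closed operator $h\mapsto R^{-1}(\text{the element realising }Sh\text{ in }\rg R)$ defined on the graph-type Hilbert space associated with $\rg R$, so that $S^{-1}[\mathcal{R}]$ is a domain of a closed operator and hence an operator range in $H$ by Lemma~\ref{lem:op-ran}\,\ref{en:op-ran-comp}. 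Making this reduction precise—identifying the relevant closed operator in each of the three cases and confirming Condition~\ref{en:ass-two} is never needed beyond guaranteeing the setup—is the only genuinely delicate point; the rest is bookkeeping with the identities from Proposition~\ref{prop:gen-welldef}, Lemma~\ref{lem:DHa-formula} and Proposition~\ref{prop:gen-complete-helper}.
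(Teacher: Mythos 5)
Your proof is correct and follows the same overall plan as the paper's: express the three sets through the identities $D(A)=j(D_j(\form{a}))$, $D_j(\form{a})=T^{-1}[\rg j^*]$ (Lemma~\ref{lem:DHa-formula}), $\rg(I+A)=(j^*)^{-1}[T(D_j(\form{a}))]$ (Proposition~\ref{prop:gen-complete-helper}) and $\ker A=j(D_j(\form{a})\cap\ker T_0)=j(\ker T_0)$, then apply Lemma~\ref{lem:op-ran}. The one genuine difference lies in how preimages are handled. The paper first invokes Proposition~\ref{prop:Tinjective}\,\ref{en:res-op} to reduce to the case $\ker T=\{0\}$; then $T^{-1}$ is a closed operator (and $j^{*-1}$ is closed as well, since $j^*$ is injective because $\rg j$ is dense), so the set-theoretic preimages become images under closed operators and Lemma~\ref{lem:op-ran}\,\ref{en:op-ran-comp} applies verbatim. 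You instead establish the general fact that the preimage of an operator range under a bounded operator is again an operator range, by realising it as the domain of a closed operator; this lets you skip the reduction step and Proposition~\ref{prop:Tinjective} entirely. Your device is sound, with one small wrinkle to clean up: for non-injective $R$ the expression $R^{-1}(\cdot)$ must be interpreted, e.g.\ by restricting $R$ to $(\ker R)^\perp$, or more cleanly by giving $\mathcal{R}=\rg R$ the Hilbert space structure from Lemma~\ref{lem:op-ran}\,\ref{en:op-ran-embed} and taking the closed operator $h\mapsto Sh\in\mathcal{R}$ with domain $S^{-1}[\mathcal{R}]$. In short, the paper's route is shorter because it reuses a structural result already proved, while yours is more self-contained and marginally more general, never needing injectivity of $T$ or of $j^*$.
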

\begin{proof}
Suppose $A$ is associated with $(\form{a},j)$. 
By Proposition~\ref{prop:Tinjective}\,\ref{en:res-op} we may assume that $\ker
T=\{0\}$. 
Then $T^{-1}$ is a closed operator, so $D_j(\form{a})=T^{-1}[\rg j^*]$ (see Lemma~\ref{lem:DHa-formula}) is
an operator range in $V$ by Lemma~\ref{lem:op-ran}\,\ref{en:op-ran-comp}.
Composing this
with $j$ shows that $D(A)=j(D_j(\form{a}))$ is an operator range in~$H$.
It follows similarly that also $\rg(I+A)=j^{*-1}[TD_j(\form{a})]$ (see Proposition~\ref{prop:gen-complete-helper}) and $\ker
A=\{j(u) : u\in\ker T_0\}$ are operator ranges in~$H$.
\end{proof}

For the proof of the other direction in Theorem~\ref{thm:gen-general} we need the following lemma.
\begin{lemma}\label{lem:gen-min-zero}
Let $H$ be a Hilbert space. Then the operator $A$ in $H$ with domain $D(A)=\{0\}$ can be generated by an accretive form.
Moreover, if $H$ is infinite dimensional, then $A$ can be generated by an embedded accretive form.
\end{lemma}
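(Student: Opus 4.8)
The plan is to reduce the statement to the construction of a pair $(\form{a},j)$ with trivial effective form domain, $D_j(\form{a})=\{0\}$, and then to invoke Lemma~\ref{lem:min-DHa} for the substantial part. Indeed, suppose we have found $V$, $j$ and $\form{a}$ satisfying Conditions~\ref{en:ass-one} and~\ref{en:ass-two} with $D_j(\form{a})=\{0\}$. Then $D_j(\form{a})\cap\ker j=\{0\}\subset\ker T$, so by Proposition~\ref{prop:gen-welldef} the pair $(\form{a},j)$ is associated with an accretive operator, and its domain is $j(D_j(\form{a}))=j(\{0\})=\{0\}$. Hence the associated operator is precisely the operator $A$ with $D(A)=\{0\}$, and $A$ is generated by the accretive form $\form{a}$ (by an embedded one if in addition $j$ is injective).

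For the infinite-dimensional case I would simply appeal to Lemma~\ref{lem:min-DHa}: by Example~\ref{ex:min-DHa} there exist self-adjoint, positive, injective operators $R,S\in\Linop(H)$ with $\rg R\cap\rg S=\{0\}$, and the lemma then provides $V=\rg R$, an \emph{injective} inclusion map $j$, and an accretive form $\form{a}$ with $D_j(\form{a})=\{0\}$. By the reduction above this settles both the main claim and the ``moreover'' part whenever $H$ is infinite dimensional.

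It remains to treat a general (in particular finite-dimensional) $H$, where an injective $j$ from an infinite-dimensional $V$ is impossible and a separate device is needed. Here the plan is a projection trick: enlarge $H$ to $\tilde H\coloneqq H\oplus\ell_2$, which is infinite dimensional, and apply the previous step to obtain $\tilde V$, an injective $\tilde j\colon\tilde V\to\tilde H$ and an accretive form $\tilde{\form{a}}$ with $D_{\tilde j}(\tilde{\form{a}})=\{0\}$. Let $P\colon\tilde H\to H$ be the orthogonal projection and set $V\coloneqq\tilde V$, $\form{a}\coloneqq\tilde{\form{a}}$ and $j\coloneqq P\circ\tilde j$. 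Then $\form{a}$ is still continuous and accretive, and $j$ has dense range since $\tilde j$ does and $P$ is continuous and surjective; thus Conditions~\ref{en:ass-one} and~\ref{en:ass-two} hold. Viewing $H$ as a subspace of $\tilde H$, self-adjointness of $P$ gives $\scalar{f}{j(v)}_H=\scalar{f}{\tilde j(v)}_{\tilde H}$ for all $f\in H$ and $v\in V$. Hence if $u\in D_j(\form{a})$ with witness $f\in H$, then $\tilde{\form{a}}(u,v)=\scalar{f}{\tilde j(v)}_{\tilde H}$ for all $v$, so $u\in D_{\tilde j}(\tilde{\form{a}})=\{0\}$. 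Therefore $D_j(\form{a})=\{0\}$, and by the reduction $(\form{a},j)$ generates the operator with domain $\{0\}$.

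The only genuinely non-trivial ingredient is the existence of $R,S$ with $\rg R\cap\rg S=\{0\}$ underlying Lemma~\ref{lem:min-DHa}; once that is in hand the argument is routine. The one subtlety to watch in the general case is that passing from $\tilde j$ to $j=P\tilde j$ could a priori enlarge the effective form domain, but restricting the witness $f$ to the subspace $H$ is a \emph{more} restrictive requirement than allowing $f\in\tilde H$, so the inclusion $D_j(\form{a})\subset D_{\tilde j}(\tilde{\form{a}})$ holds and no enlargement occurs.
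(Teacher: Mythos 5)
Your proposal is correct, and for the infinite-dimensional case it follows the same route as the paper: reduce the lemma to constructing a pair $(\form{a},j)$ with $D_j(\form{a})=\{0\}$, then invoke Lemma~\ref{lem:min-DHa} with operators $R,S$ as in Example~\ref{ex:min-DHa}. One small caveat there: Example~\ref{ex:min-DHa} produces such pairs $R,S$ only on the \emph{separable} spaces $\Ltwo[\RR]$ and $\Ltwo[0,1]$, whereas the lemma allows arbitrary infinite-dimensional $H$. The paper closes this with one extra sentence --- take direct sums of suitably many disjoint copies of the separable construction, since only the Hilbert dimension matters --- and your write-up needs the same remark, as your appeal to Example~\ref{ex:min-DHa} for general $H$ is not literally justified as stated.

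Where you genuinely diverge is the finite-dimensional case. The paper builds, for each vector $e_\alpha$ of an orthonormal basis of $H$, a copy of the data of Example~\ref{ex:welldef-nonmacc} (with $V_\alpha=\ell_2$ and a rank-one map $j_\alpha$ onto $\linspan\{e_\alpha\}$ satisfying $D_{j_\alpha}(\form{a}_\alpha)=\{0\}$) and takes the direct sum. You instead use a compression trick: enlarge $H$ to $\widetilde{H}\coloneqq H\oplus\ell_2$, which is separable, run the infinite-dimensional construction there to get $(\form{\tilde{a}},\tilde{j})$ with $D_{\tilde{j}}(\form{\tilde{a}})=\{0\}$, and set $j\coloneqq P\tilde{j}$ with $P$ the orthogonal projection onto $H$. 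Your verification is sound: $j$ retains dense range, and since $\scalar{f}{P\tilde{j}(v)}_H=\scalar[b]{f}{\tilde{j}(v)}_{\widetilde{H}}$ for all $f\in H$, any witness for $u\in D_j(\form{a})$ also witnesses $u\in D_{\tilde{j}}(\form{\tilde{a}})=\{0\}$, so $D_j(\form{a})=\{0\}$ and the reduction via Proposition~\ref{prop:gen-welldef} applies. Your route buys uniformity --- a single construction covers every $H$, with no bespoke finite-dimensional example --- at the cost of invoking the heavier Beurling-theorem machinery behind Example~\ref{ex:min-DHa} where the paper's finite-dimensional case gets by with the elementary and fully explicit Example~\ref{ex:welldef-nonmacc}.
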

\begin{proof}
First suppose that $H$ is finite dimensional.
Let $(e_\alpha)_{\alpha\in I}$ be an orthonormal basis of~$H$.
Then similarly as in Example~\ref{ex:welldef-nonmacc}, we can choose $V_\alpha=\ell_2$, $j_\alpha\colon V\to\linspan\{e_\alpha\}$ and $\form{a}_\alpha\colon V_\alpha\times V_\alpha\to\CC$
such that $D_{j_\alpha}(\form{a}_\alpha) = \{0\}$ for all $\alpha\in I$.
Taking the direct sum over all $\alpha\in I$ gives a Hilbert space $V$, a linear map $j\colon V\to H$ and a form $\form{a}$
such that $\form{a}$ and $j$ satisfy Conditions~\ref{en:ass-one} and~\ref{en:ass-two}. Moreover, $A$ is associated with $(\form{a},j)$.

Now suppose that $H$ is infinite dimensional. To show that $A$ can be generated by an embedded accretive form,
it suffices to obtain operators $R$ and $S$ on $H$ as in Lemma~\ref{lem:min-DHa}.
If $H$ is separable, we may assume that $H=\Ltwo[\RR]$ and take the operators in Example~\ref{ex:min-DHa}.\ref{en:min-DHa-PQ}.
In the general case, we can take the direct sums of suitably many disjoint copies of the
operators from the separable case. Note that only the Hilbert space dimension is of importance
here.
\end{proof}

We also rely on Phillips' construction of extensions of dissipative operators as presented in~\cite[Section~I.1]{Phi59}.
We recall the required results.
Let $A$ be an accretive operator in a Hilbert space~$H$.
The Cayley transform of $A$ is the operator $J\coloneqq (I-A)(I+A)^{-1}$ in $H$ 
with domain $D(J)=\rg(I+A)$. The operator $J$ is contractive as
$\norm{(I-A)x}^2 \le \norm{(I+A)x}^2$ for all $x\in D(A)$.
Moreover, note that $I+J = 2(I+A)^{-1}$ is injective and that $\rg(I+J) = D(A)$.
The operator $A$ can be recovered from the equality
\begin{equation}\label{eq:A-from-J}
    A(I+J)u = (I-J)u
\end{equation}
for all $u\in D(J)$.
The operator $A$ is closed if and only if $J$ is closed. 
Moreover, Phillips observed that every proper contractive extension $J'$ of $J$ 
such that $I+J'$ is injective corresponds to a proper accretive extension of~$A$.

We can now prove the remaining direction of Theorem~\ref{thm:gen-general}.
\begin{proof}[Proof of Theorem~\ref{thm:gen-general}]
Suppose that $A$ is an accretive operator such that $\rg(I+A)$ is an operator range in~$H$.
Set $H_2\coloneqq D(A)^\perp$. By Lemma~\ref{lem:gen-min-zero} there exist a Hilbert space $V_2$, a linear map $j_2\colon V_2\to H_2$ and a form $\form{a}_2\colon V_2\times V_2\to\CC$
such that $(\form{a}_2,j_2)$ is associated with an accretive operator and $D_{j_2}(\form{a}_2)=\{0\}$.
If $H_2$ is infinite dimensional, we may assume that $j_2$ is injective.

Let $J$ be the contraction corresponding to $A$ in the sense of Phillips, i.e., $J\coloneqq (I-A)(I+A)^{-1}$ with domain $D(J)=\rg(I+A)$.
By Lemma~\ref{lem:op-ran}\,\ref{en:op-ran-embed} we can equip $X\coloneqq D(J)$ with a Hilbert space structure such that $X$ is continuously embedded into~$H$.
So there exists an $M>0$ such that $\norm{Ju}_H\le \norm{u}_H\le M\norm{u}_{X}$ for all $u\in X$. 

Define $V\coloneqq X\times V_2$, $j\colon V\to H$ by $j(u_1,u_2)=(I+J)u_1+j_2(u_2)$ and $\form{a}\colon V\times V\to\CC$ by
\begin{align*}
    \form{a}((u_1,u_2),(v_1,v_2)) &= \scalar{(I-J)u_1}{(I+J)v_1+j_2(v_2)}_H - \scalar{j_2(u_2)}{(I-J)v_1}_H \\
                                  &\qquad {}+ \form{a}_2(u_2,v_2).
\end{align*}
By the previous paragraph both $\form{a}$ and $j$ are continuous. Observe that $\rg j = D(A)\oplus \rg j_2$ is dense in $H=\clos{D(A)}\oplus H_2$. 
It is readily verified that
\[
    \Re\form{a}((u_1,u_2),(u_1,u_2))=\norm{u_1}^2_H - \norm{Ju_1}^2_H + \Re\form{a}_2(u_2,u_2)\ge 0.
\]
So $\form{a}$ and $j$ satisfy Conditions~\ref{en:ass-one} and~\ref{en:ass-two}.
Note that $j$ is injective if $H_2$ is zero or infinite dimensional.

We show that $D_j(\form{a}) = X\times\{0\}$. 
On the one hand, if $u_1 \in X$ then
\[
  \form{a}((u_1,0),(v_1,v_2)) = \scalar{(I-J)u_1}{j(v_1,v_2)}_H
\]
for all $(v_1,v_2) \in V$, and hence $(u_1,0) \in D_j(\form{a})$. 
On the other hand, let $(u_1,u_2)\in D_j(\form{a})$ and $f\in H$ be such that
$\form{a}((u_1,u_2),(v_1,v_2))=\scalar{f}{j(v_1,v_2)}_H$ for all $(v_1,v_2)\in V$.
By choosing $v_1=0$, we obtain
\[
    \scalar{f}{j_2(v_2)}_H = \form{a}((u_1,u_2),(0,v_2))=\scalar{(I-J)u_1}{j_2(v_2)}_H + \form{a}_2(u_2,v_2) 
\]
for all $v_2\in V_2$.
Therefore $u_2\in D_{j_2}(\form{a}_2)=\{0\}$. 

Clearly, $D_j(\form{a})\cap\ker j=\{0\}$ since $I+J$ is injective.
Therefore $(\form{a},j)$ is associated with an accretive operator~$B$.
By the previous paragraph, $D(B) = j(D_j(\form{a})) = \rg(I+J) = D(A)$ and
$B(I+J)u_1 = Bj(u_1,0) = (I-J)u_1$ for all $u_1 \in X = D(J)$. 
By~\eqref{eq:A-from-J} it follows that $A=B$.
\end{proof}

The next two corollaries are special cases of Theorem~\ref{thm:gen-general}.
We will see in Example~\ref{ex:opran-dom-nongen} below that the closability condition
in the following corollary cannot be omitted. If $A$ is densely
defined, then this condition is automatically satisfied by
Lemma~\ref{lem:acc-ddcl}.

\begin{corollary}\label{cor:gen-general}
Let $A$ be a closable, accretive operator in a Hilbert space~$H$.
Suppose that $D(A)$ is an operator range in~$H$.
Then $A$ can be generated by an accretive form.
Moreover, if the orthogonal complement of $D(A)$ in $H$ is zero or infinite dimensional, then $A$ can be generated by an embedded accretive form.
\end{corollary}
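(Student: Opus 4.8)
The plan is to deduce the corollary from Theorem~\ref{thm:gen-general} by checking that its hypothesis holds, namely that $\rg(I+A)$ is an operator range in~$H$. Once this is verified, there is nothing more to do: the main assertion is immediate from the first part of Theorem~\ref{thm:gen-general}, and the statement about embedded accretive forms follows from the ``moreover'' part of that theorem, since the dimension condition on $D(A)^\perp$ assumed here is literally the one appearing there.

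The key steps, in order, are as follows. First I would exploit the closability of $A$ to pass to its closure $\clos{A}$, a closed operator with $D(A)\subset D(\clos{A})$ on which $\clos{A}$ restricts to~$A$; since $I$ is bounded, $I+\clos{A}$ is then closed as well. The central observation is the identity
\[
    \rg(I+A) = (I+A)\bigl(D(A)\bigr) = (I+\clos{A})\bigl(D(A)\cap D(\clos{A})\bigr),
\]
which holds because $D(A)\subset D(\clos{A})$ and $\clos{A}$ agrees with $A$ on~$D(A)$. By hypothesis $D(A)$ is an operator range in~$H$, and $I+\clos{A}$ is a closed operator; hence Lemma~\ref{lem:op-ran}\,\ref{en:op-ran-comp} shows that the right-hand side, and therefore $\rg(I+A)$, is an operator range in~$H$. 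Applying Theorem~\ref{thm:gen-general} then finishes the proof of both assertions.

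The only delicate point -- and the reason the closability hypothesis cannot be dropped -- is that the argument hinges on applying Lemma~\ref{lem:op-ran}\,\ref{en:op-ran-comp} with a genuinely \emph{closed} operator that still coincides with $A$ on~$D(A)$. Closability is exactly what guarantees such an operator, namely $\clos{A}$, exists; without it there need be no closed extension agreeing with $A$ on its domain, and then $(I+A)(D(A))$ cannot be exhibited as the image of an operator range under a closed map. I expect this to be the substantive content of the corollary, and it is precisely the gap that Example~\ref{ex:opran-dom-nongen} will illustrate by producing an accretive operator with $D(A)$ an operator range for which $\rg(I+A)$ is not.
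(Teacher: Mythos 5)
Your proof is correct and follows the same overall reduction as the paper: establish that $\rg(I+A)$ is an operator range and then cite Theorem~\ref{thm:gen-general} for both assertions. The difference lies in how that range condition is verified. The paper does it directly: it takes a bounded $S\in\Linop(H)$ with $\rg S=D(A)$, observes that $R\coloneqq(I+A)S$ is closable as the composition of a bounded and a closable operator, and, being everywhere defined, $R$ is in fact bounded, so $\rg(I+A)=\rg R$ is an operator range. You instead pass to the closure $\clos{A}$ and apply Lemma~\ref{lem:op-ran}\,\ref{en:op-ran-comp} to the closed operator $I+\clos{A}$ and the operator range $D(A)$, via the identity $\rg(I+A)=(I+\clos{A})\bigl(D(A)\cap D(I+\clos{A})\bigr)$. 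These are two versions of the same underlying argument --- the cited part of Lemma~\ref{lem:op-ran} is itself proved by exactly the bounded-composition and closed-graph device that the paper's proof runs by hand --- but your route reuses an already-established lemma and makes the role of closability conceptually transparent (it is what supplies a closed operator agreeing with $A$ on $D(A)$), while the paper's route is more self-contained and exhibits an explicit bounded operator whose range equals $\rg(I+A)$. Your closing remark is also consonant with the paper, which introduces Example~\ref{ex:opran-dom-nongen} precisely as a witness that the closability hypothesis cannot be omitted.
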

\begin{proof}
By assumption, there exists a bounded operator $S\in\Linop(H)$ such that $\rg(S)=D(A)$.
Then $R\coloneqq (I+A)S$ is closable as a composition of a bounded and a closable operator.
Since $D(R)=H$, it follows that $R\in\Linop(H)$. So $\rg(I+A)=\rg R$ is an operator range.
Now the claims follow from Theorem~\ref{thm:gen-general}.
\end{proof}
\begin{corollary}\label{cor:gen-closed}
Let $A$ be a closed, accretive operator in a Hilbert space.
Then $A$ can be generated by an accretive form. 
\end{corollary}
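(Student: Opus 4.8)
The plan is to derive Corollary~\ref{cor:gen-closed} directly from Theorem~\ref{thm:gen-general} by checking its sufficient condition, namely that $\rg(I+A)$ is an operator range in~$H$. Since $A$ is closed and accretive, the operator $I+A$ is injective (as $\norm{(I+A)x}\ge\norm{x}$ for all $x\in D(A)$ by accretivity) and closed, so its inverse $(I+A)^{-1}$ is a closed operator defined on $D((I+A)^{-1})=\rg(I+A)$.

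First I would invoke Lemma~\ref{lem:op-ran}\,\ref{en:op-ran-comp}: the domain of a closed operator is an operator range. Applying this to the closed operator $(I+A)^{-1}$ shows that $D((I+A)^{-1})=\rg(I+A)$ is an operator range in~$H$. This is really the only content that needs to be verified; everything else is a citation.

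With $\rg(I+A)$ established as an operator range, Theorem~\ref{thm:gen-general} immediately yields that $A$ can be generated by an accretive form, completing the proof. One should note that no condition on the codimension of $D(A)$ is needed here, since the corollary only claims generation by an accretive form (not necessarily an embedded one); thus the second, finer conclusion of Theorem~\ref{thm:gen-general} is simply not invoked.

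I do not anticipate a genuine obstacle, as the argument is a short deduction. The only point requiring a moment's care is confirming that closedness of $A$ passes to closedness of $(I+A)^{-1}$ (equivalently, that one is entitled to apply the lemma). This follows because $I+A$ is closed whenever $A$ is, and the inverse of an injective closed operator is again closed; accretivity guarantees the needed injectivity of $I+A$.

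\begin{proof}
Since $A$ is accretive, $\norm{(I+A)x}\ge\norm{x}$ for all $x\in D(A)$, so $I+A$ is injective. As $A$ is closed, so is $I+A$, and hence its inverse $(I+A)^{-1}$ is a closed operator with domain $D\bigl((I+A)^{-1}\bigr)=\rg(I+A)$. By Lemma~\ref{lem:op-ran}\,\ref{en:op-ran-comp}, the domain of a closed operator is an operator range; thus $\rg(I+A)$ is an operator range in~$H$. The claim now follows from Theorem~\ref{thm:gen-general}.
\end{proof}
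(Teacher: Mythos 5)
Your proof is correct, but it verifies the hypothesis of Theorem~\ref{thm:gen-general} by a different key lemma than the paper. The paper invokes Lemma~\ref{lem:acc-clrg} (an accretivity-specific fact) to conclude that $\rg(I+A)$ is actually \emph{closed}, and closed subspaces are trivially operator ranges. You instead reach the weaker (but sufficient) conclusion that $\rg(I+A)$ is an operator range, via the general operator-range machinery of Lemma~\ref{lem:op-ran}\,\ref{en:op-ran-comp}. Both routes are one-step verifications, but the paper's yields the sharper intermediate fact (closedness of the range), while yours is more robust in spirit: indeed, your argument can be streamlined further, since Lemma~\ref{lem:op-ran}\,\ref{en:op-ran-comp} also states that the \emph{range} of a closed operator is an operator range, so you may apply it directly to the closed operator $I+A$ itself; this removes the detour through $(I+A)^{-1}$ and makes the injectivity argument (hence the use of accretivity in that step) unnecessary. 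As it stands, though, every step you wrote is valid: accretivity does give $\norm{(I+A)x}\ge\norm{x}$, so $I+A$ is injective, its inverse is closed, and its domain $\rg(I+A)$ is an operator range.
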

\begin{proof}
The space $\rg(I+A)$ is an operator range since it is closed by Lemma~\ref{lem:acc-clrg}.
\end{proof}
Theorem~\ref{thm:gen-general}
can be applied to the nonclosable operator in Example~\ref{ex:phillips}; cf.~\eqref{eq:ex-nonclos-opran}.
We point out, however, that for this operator the orthogonal complement of its domain is merely one-dimensional.
Hence the theorem does not state that this operator can be generated by an \emph{embedded} accretive form, as was established in Example~\ref{ex:phillips}.

Next we give an example of an operator that can be generated
by an accretive form, but not by an embedded accretive form.
This shows that in general we cannot omit the condition on
the dimension of the orthogonal complement of the operator domain in Corollary~\ref{cor:gen-general}.
\begin{example}
Let $H$ be a Hilbert space and let $H_0$ be a proper closed subspace of~$H$. 
Suppose $H_0$ has finite codimension in~$H$. Define the accretive operator $A$ in $H$ by $A=0$ on $D(A)=H_0$.

Let $V\subset H$ be a Hilbert space that is continuously embedded
in~$H$. Assume that $V$ is dense in $H$ and $H_0\subset V$. Then
$V$ is a closed subspace of $H$ since $H_0$ has finite
codimension in $H$, so $V=H$ as vector spaces. By the bounded inverse theorem,
the spaces $V$ and $H$ have equivalent inner products.
Every continuous form on $V$ is therefore associated with a
bounded operator on~$H$.
This shows that the operator $A$ can not be generated by an embedded accretive form.
\end{example}

We next give examples of accretive operators that cannot be generated by an accretive form.
The arguments are based on Proposition~\ref{prop:oprange-cond}.

\begin{example}
Let $\widetilde{A}$ be a bounded accretive operator on an infinite-dimensional Hilbert space~$H$.
Suppose $W$ is a nonclosed subspace of finite codimension in~$H$. For example, $W$ could be the kernel of an unbounded linear
functional on~$H$.
Let $A=\restrict{\widetilde{A}}{W}$.
Then $D(A)$ fails to be an operator range since 
a nonclosed operator range has infinite codimension in $H$~\cite[Corollary after Theorem~2.3]{FW71:op-rg}.
By Proposition~\ref{prop:oprange-cond} the operator $A$ cannot be generated by an accretive form.
\end{example}

The next example shows that in general in Corollary~\ref{cor:gen-general} we cannot omit the closability assumption on~$A$.
In other words, not every accretive operator with a domain
that is an operator range can be generated by an accretive form.
\begin{example}\label{ex:opran-dom-nongen}
Let $H=\ell_2(\NN_0)$, and let $\varphi$ be an unbounded linear functional on~$\ell_2(\NN)$. 
Then the operator $A$ given by $Ax=\varphi(x)e_0$ with domain $D(A)=\ell_2(\NN)=\{e_0\}^\perp$ is accretive.
Clearly, $D(A)$ is an operator range.
But $\ker A$ fails to be an operator range since it is not closed and has codimension $2$ in~$H$.
So $A$ cannot be generated by an accretive form by Proposition~\ref{prop:oprange-cond}.
\end{example}

Finally, we prove that Phillips' example of a maximal accretive operator that is not \m-accretive cannot be generated by
an accretive form.
\begin{proposition}\label{prop:phillips}
Suppose $A$ can be generated by an accretive form. Then $A$ is maximal accretive if and only if $A$ is \m-accretive.
\end{proposition}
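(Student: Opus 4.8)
The plan is to prove the two implications separately, the forward one being immediate and the converse requiring Phillips' Cayley transform machinery together with the operator range structure. If $A$ is \m-accretive, then $A$ is in particular maximal accretive by Proposition~\ref{prop:macc-maxacc}. For the converse, assume that $A$ is generated by an accretive form and is maximal accretive, and suppose towards a contradiction that $A$ is not \m-accretive. By Proposition~\ref{prop:macc-maxacc} this means that $A$ is not closed, so by Lemma~\ref{lem:acc-clrg} the space $\mathcal{R}\coloneqq\rg(I+A)$ is not closed; by Theorem~\ref{thm:gen-general} it is nonetheless an operator range. The goal is then to construct a proper accretive extension of $A$, which contradicts maximality.

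To this end I would work with the Cayley transform $J=(I-A)(I+A)^{-1}$ on $D(J)=\mathcal{R}$ recalled above, together with its continuous (contractive) extension $\bar{J}$ to the closed subspace $M\coloneqq\overline{\mathcal{R}}$. Set $N\coloneqq\ker(I+\bar{J})$. Since $I+J$ is injective on $\mathcal{R}$, one has $\mathcal{R}\cap N=\{0\}$. The crucial step is to show that $\mathcal{R}+N\neq M$. Granting this, I would pick $w\in M\setminus(\mathcal{R}+N)$; then $w\notin\mathcal{R}$, and the restriction $\bar{J}|_{D'}$ to $D'\coloneqq\mathcal{R}+\linspan\{w\}$ is a contractive extension of $J$ that is proper and satisfies $\ker(I+\bar{J}|_{D'})=N\cap D'=\{0\}$, so that $I+\bar{J}|_{D'}$ is injective. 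By Phillips' correspondence recalled above, this proper contractive extension with injective $I+(\cdot)$ yields a proper accretive extension of $A$, contradicting maximal accretivity and completing the proof.

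The main obstacle is the inequality $\mathcal{R}+N\neq M$, and this is exactly where the hypothesis that $A$ is generated by an accretive form enters, through $\mathcal{R}$ being an operator range. If $N$ were finite dimensional this would be routine, since a non-closed operator range has infinite codimension; but $N$ can be infinite dimensional, so I would instead invoke the general fact that an operator range possessing a closed algebraic complement is necessarily closed. This follows from Lemma~\ref{lem:op-ran}\,\ref{en:op-ran-embed} and the open mapping theorem: equipping $\mathcal{R}$ with its intrinsic Hilbert space norm, for which the inclusion $\mathcal{R}\hookrightarrow M$ is continuous, the addition map $\mathcal{R}\oplus N\to M$ becomes a continuous bijection of Hilbert spaces and hence a topological isomorphism, which forces $\mathcal{R}$ to be closed. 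Since $\mathcal{R}$ is not closed and $\mathcal{R}\cap N=\{0\}$, I conclude that $\mathcal{R}+N\subsetneq M$, as required. I note that $N$ is in fact the multivalued part of the closure of the graph of $A$, so the argument also explains why, within the class of operators generated by accretive forms, the Phillips pathology of a non-closed maximal accretive operator cannot occur.
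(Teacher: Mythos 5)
Your proof is correct, and it rests on the same two pillars as the paper's own argument: Phillips' Cayley-transform correspondence (proper contractive extensions $J'$ of $J$ with $I+J'$ injective correspond to proper accretive extensions of $A$) and the rigidity of operator ranges. The execution, however, is genuinely different. The paper argues directly: from maximality it first invokes Phillips' theory to conclude that $D(J)=\rg(I+A)$ is \emph{dense} in $H$ (a fact it does not prove), extends $J$ to a contraction $\tilde J$ on all of $H$, shows that maximality forces the algebraic decomposition $D(J)\oplus\ker(I+\tilde J)=H$, and then cites the Fillmore--Williams theorem that an operator range complemented in the lattice of operator ranges is closed; density then gives $\rg(I+A)=H$. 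You instead argue by contradiction inside $M=\clos{\rg(I+A)}$ and never need the density fact: failure of \m-accretivity together with maximality gives non-closedness of $\mathcal{R}=\rg(I+A)$ via Proposition~\ref{prop:macc-maxacc} and Lemma~\ref{lem:acc-clrg}; your open-mapping-theorem argument correctly shows that a non-closed operator range cannot admit a closed algebraic complement in $M$ (this replaces the citation of Fillmore--Williams by a short self-contained proof of exactly the case needed); and the resulting $w\in M\setminus(\mathcal{R}+N)$ yields the proper contractive extension $\restrict{\bar J}{\mathcal{R}+\linspan\{w\}}$ with injective $I+{}$(extension), hence a proper accretive extension of $A$, contradicting maximality. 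What your route buys: it is more self-contained and localises the whole argument in $\clos{\rg(I+A)}$. What it costs: it consumes Proposition~\ref{prop:macc-maxacc} and Lemma~\ref{lem:acc-clrg}, which the paper's proof deliberately avoids --- indeed, the paper remarks right after the proposition that Proposition~\ref{prop:macc-maxacc} can conversely be \emph{derived} from Corollary~\ref{cor:gen-closed} together with this proposition, and that remark would become circular if your proof were substituted for the paper's.
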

\begin{proof}
Clearly, if $A$ is \m-accretive, then $A$ is maximal accretive.

Now suppose that $A$ is maximal accretive.
Let $J$ be the contraction corresponding to $A$ by Phillips' theory.
Then $D(J)=\rg(I+A)$ is dense since $A$ is maximal accretive.
So $J$ has a unique bounded extension $\tilde{J}$ on~$H$.
Note that $\tilde{J}$ extends any contractive extension of~$J$. 
Due to the maximality of $A$, the operator $\restrict{I+\tilde{J}}{\linspan\{z\}+D(J)}$ 
cannot be injective for any $z\in H\setminus D(J)$. In other words, $D(J)\oplus\ker(I+\tilde{J}) = H$, 
where the direct sum might not be orthogonal in~$H$. Note that $\ker(I+\tilde{J})$ is an operator range in $H$ since
it is closed in $H$ and that $D(J)=\rg(I+A)$ is an operator range in $H$ by Proposition~\ref{prop:oprange-cond}. 
In particular, $\rg(I+A)$ is complemented in the lattice of all operator ranges in~$H$. 
By~\cite[Theorem~2.3]{FW71:op-rg} it follows that $\rg(I+A)$ is closed. Hence $\rg(I+A)=H$, so $A$ is \m-accretive.
\end{proof}
Note that Proposition~\ref{prop:macc-maxacc} follows immediately from Corollary~\ref{cor:gen-closed} and Proposition~\ref{prop:phillips}.

\section{Form approximation}\label{sec:form-approx}

Assume that $\form{a}$ and $j$ satisfy Conditions~\ref{en:ass-one} and~\ref{en:ass-two}.
Moreover, assume that $(\form{a},j)$ is associated with an \m-accretive operator~$A$.
It is natural to ask whether one can approximate $\form{a}$ by suitable $j$-elliptic forms $\form{a}_n\colon V\times V\to\CC$ such that the operators $A_n$ associated
with $(\form{a}_n,j)$ converge to $A$ in a suitable sense for $n\to\infty$.

The following result is a starting point for the study of resolvent convergence of \m-accretive operators generated by accretive forms.
\begin{theorem}\label{thm:approx}
Assume $\form{a}$ and $j$ satisfy Conditions~\ref{en:ass-one} and~\ref{en:ass-two}.
Suppose $(\form{a},j)$ is associated with an \m-accretive operator~$A$.

Let $\theta\in[0,\tfrac{\pi}{2})$. 
Let $(B_n)$ be a sequence of bounded sectorial operators in $V$ with vertex $0$ and semi-angle $\theta$. 
Moreover, suppose there exist sequences of strictly positive numbers $(\delta_n)$ and $(\eps_n)$ 
such that $\lim {\eps_n^2}/{\delta_n}=0$ and 
\begin{equation}\label{eq:lowup-est-Bn}
    \delta_n\norm{u}_V^2 \le \Re \scalar{B_n u}{u}_V \le \eps_n\norm{u}_V^2
\end{equation}
for all $u\in V$ and $n\in\NN$.
For every $n\in\NN$ define $\form{a}_n\colon V\times V\to C$ by
\[
    \form{a}_n(u,v) = \form{a}(u,v) + \scalar{B_n u}{v}_V.
\]
Then $\form{a}_n$ is $j$-elliptic and continuous, and $(\form{a}_n,j)$ is associated with an \m-sectorial operator $A_n$ for all $n\in\NN$.
Moreover, $A_n\to A$ in norm resolvent sense.
\end{theorem}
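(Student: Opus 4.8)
The plan is to dispose of the elliptic and generation assertions in a line and then spend the work on the resolvent convergence. Continuity of $\form{a}_n$ is clear since $B_n$ is bounded, so $\scalar{B_n\cdot}{\cdot}_V$ is a continuous form. For $j$-ellipticity I would simply add the two defining inequalities: as $\form{a}$ is accretive and $\Re\scalar{B_n u}{u}_V\ge\delta_n\norm{u}_V^2$,
\[
    \Re\form{a}_n(u,u)=\Re\form{a}(u,u)+\Re\scalar{B_n u}{u}_V\ge\delta_n\norm{u}_V^2\qquad(u\in V),
\]
so $\form{a}_n$ is $j$-elliptic with $\omega=0$ and $\mu=\delta_n$. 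Hence $(\form{a}_n,j)$ is associated with an \m-sectorial operator $A_n$ by Theorem~\ref{thm:ate} (cf.\ Remark~\ref{rem:gen-complete}.\ref{en:rem-gc-jell}), and the operator $T_n=T+B_n$ representing $\form{b}_n\coloneqq\form{a}_n+\scalar{j\cdot}{j\cdot}_H$ is invertible.

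For the convergence I would compare $(I+A_n)^{-1}$ and $(I+A)^{-1}$ through the forms. By Proposition~\ref{prop:Tinjective}\,\ref{en:Z-bdd}, applied with $W=(\ker T)^\perp$, one has $(I+A)^{-1}=jZ$ for a bounded $Z\colon H\to V$ with $TZ=j^*$; since $\ker T_n=\{0\}$, the same proposition gives $(I+A_n)^{-1}=jT_n^{-1}j^*$. Fix $f\in H$ and set $u\coloneqq Zf$, $u_n\coloneqq T_n^{-1}j^*f$, $x\coloneqq ju=(I+A)^{-1}f$ and $x_n\coloneqq ju_n=(I+A_n)^{-1}f$. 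Both representatives solve the weak equations $\form{b}(u,v)=\scalar{f}{j(v)}_H=\form{b}_n(u_n,v)$ for all $v\in V$; subtracting and using $\form{b}_n=\form{b}+\scalar{B_n\cdot}{\cdot}_V$ gives
\[
    \form{b}(u_n-u,v)=-\scalar{B_n u_n}{v}_V\qquad(v\in V).
\]

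The decisive step is to test this with $v=u_n-u$ and take real parts. Accretivity of $\form{a}$ bounds the left-hand side below, $\Re\form{b}(u_n-u,u_n-u)\ge\norm{j(u_n-u)}_H^2=\norm{x_n-x}_H^2$, while on the right the term $-\Re\scalar{B_n u_n}{u_n}_V\le 0$ may be discarded, leaving $\norm{x_n-x}_H^2\le\Re\scalar{B_n u_n}{u}_V$. I would then feed in two a priori bounds. Testing the $n$-th equation with $u_n$ yields $\Re\scalar{B_n u_n}{u_n}_V\le\Re\form{b}_n(u_n,u_n)=\Re\scalar{f}{x_n}_H\le\norm{f}_H^2$ (the resolvent being a contraction), and combined with the lower estimate on $B_n$ this gives $\norm{u_n}_V\le\delta_n^{-1/2}\norm{f}_H$. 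Moreover, by Kato's representation of the sectorial operator $B_n$ (as in the proof of Proposition~\ref{prop:pert-invertible}) one has $\norm{B_n}\le(1+\tan\theta)\eps_n$. With $\norm{u}_V\le\norm{Z}\norm{f}_H$ and Cauchy--Schwarz,
\[
    \norm{x_n-x}_H^2\le\norm{B_n}\,\norm{u_n}_V\,\norm{u}_V\le(1+\tan\theta)\norm{Z}\,\frac{\eps_n}{\sqrt{\delta_n}}\,\norm{f}_H^2.
\]
As $\eps_n/\sqrt{\delta_n}=(\eps_n^2/\delta_n)^{1/2}\to 0$ by hypothesis, this yields $\norm{(I+A_n)^{-1}-(I+A)^{-1}}\to 0$, which is the asserted norm resolvent convergence.

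The reason for routing everything through the form, rather than through a second-resolvent-identity comparison of $T_n^{-1}$ with $(T|_W)^{-1}$, is the main obstacle: since $\form{a}$ is merely accretive, $T$ is not bounded below, so $(T|_W)^{-1}$ is unbounded and any such direct operator estimate collapses. The form test circumvents this because the genuinely dangerous quantity $\Re\scalar{B_n u_n}{u_n}_V$ is controlled by $\norm{f}_H^2$ \emph{directly} from the resolvent equation, even though $\norm{u_n}_V$ may itself blow up like $\delta_n^{-1/2}$; the hypothesis $\eps_n^2/\delta_n\to 0$ is precisely what defeats this blow-up. (Replacing the final Cauchy--Schwarz by the generalised Cauchy--Schwarz inequality for the sectorial form $\scalar{B_n\cdot}{\cdot}_V$ would in fact give convergence already under the weaker assumption $\eps_n\to 0$, which is implied here since $\delta_n\le\eps_n$.)
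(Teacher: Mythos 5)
Your proof is correct, and it shares the paper's skeleton: reduce to showing $\norm{(I+A_n)^{-1}-(I+A)^{-1}}\to 0$, represent the two resolvents through $u=Zf$ (Proposition~\ref{prop:Tinjective}\,\ref{en:Z-bdd} with $W=(\ker T)^\perp$) and $u_n=T_n^{-1}j^*f$, and compare them by an energy estimate in which accretivity of the unperturbed form supplies the lower bound $\norm{j(u_n)-j(u)}_H^2$ and sectoriality of $B_n$ supplies the upper bound. Where you differ is the error equation. The paper writes it against the \emph{perturbed} form, $T_n(u_n-u)=-B_nu$, so that $B_n$ acts on the well-controlled vector $u$, and then uses $\norm{T_n^{-1}}\le 1/\delta_n$; you write it against the \emph{unperturbed} form, $\form{b}(u_n-u,v)=-\scalar{B_nu_n}{v}_V$, so that $B_n$ acts on $u_n$, and you compensate with the a priori bound $\norm{u_n}_V\le\delta_n^{-1/2}\norm{f}_H$ extracted from testing the resolvent equation with $u_n$. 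Both are valid; the paper's bookkeeping yields the sharper rate $\norm{j(u_n)-j(u)}_H^2\le(1+\tan\theta)^2(\eps_n^2/\delta_n)\norm{Z}^2\norm{f}_H^2$, yours its square root, which still tends to zero under the hypothesis.

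Your closing parenthesis deserves to be promoted from an afterthought to the main line, because it is correct and proves strictly more than the theorem. Combining your inequality $\norm{j(u_n)-j(u)}_H^2\le\Re\scalar{B_nu_n}{u}_V$ with the generalised Cauchy--Schwarz inequality for sectorial forms (\cite[(1.15) in Section~VI.1]{Kat1}, the same inequality the paper uses to bound $\norm{B_n}$) and your two a priori bounds $\Re\scalar{B_nu_n}{u_n}_V\le\norm{f}_H^2$ and $\Re\scalar{B_nu}{u}_V\le\eps_n\norm{Z}^2\norm{f}_H^2$ gives
\[
    \norm{j(u_n)-j(u)}_H^2\le(1+\tan\theta)\,\norm{Z}\,\sqrt{\eps_n}\,\norm{f}_H^2,
\]
so norm resolvent convergence already follows from $\eps_n\to 0$ together with $\delta_n>0$, with no rate linking the two sequences. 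Since one may always take $\eps_n=\norm{B_n}$, this settles affirmatively, for sectorial $B_n$ with fixed semi-angle $\theta$ and strictly positive real part as in the theorem, the question raised in the unnumbered Remark at the end of Section~\ref{sec:form-approx} of whether~\eqref{eq:lowup-est-Bn} can be relaxed to $\lim_{n\to\infty}\norm{B_n}=0$ --- a relaxation the paper obtains only when $T$ is invertible.
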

\begin{proof}
The first claim is obvious.

For the second claim, it suffices to prove $\norm{(I+A_n)^{-1} - (I + A)^{-1}}\to 0$, cf.~\cite[Subsection~VIII.1.1]{Kat1}.
For all $n\in\NN$ let $T_n$ be defined as in~\eqref{eq:def-T} with respect to $\form{a}_n$ and~$j$. Then $T_n=T+B_n$.
Let $f\in H$ and set $u\coloneqq Zf$, where $Z\in\Linop(H,V)$ is as in Proposition~\ref{prop:Tinjective}\,\ref{en:Z-bdd} for the choice $W=(\ker T)^\perp$.
Then $u$ satisfies $Tu=j^*f$.
Define $u_n\coloneqq T_n^{-1} j^*f$ for all $n\in\NN$. Observe that $j(u) = (I+A)^{-1}f$ and $j(u_n) = (I+A_n)^{-1}f$ by~\eqref{eq:rel-T-IpA}.

Let $n\in\NN$ be fixed. Then we obtain 
\[
    u_n - u = T_n^{-1}Tu - T_n^{-1}T_n u = -T_n^{-1} B_n u
\]
and $T_n (u_n-u) = -B_n u$.
Hence
\begin{equation}
\label{eq:j-est}
    \norm{j(u_n) - j(u)}_H^2 \le \Re\scalar{T_n(u_n-u)}{u_n-u}_V \le \norm{B_n}^2\norm{T_n^{-1}}\norm{u}_V^2,
\end{equation}
where the first inequality follows from the accretivity of $\form{a}_n$.

Due to~\eqref{eq:lowup-est-Bn} we obtain
\[
    \delta_n\norm{u}_V^2 \le \Re\scalar{(T+B_n)u}{u}_V\le\norm{T_n u}_V\norm{u}_V
\]
for all $u\in V$. Using~\eqref{eq:lowup-est-Bn} and~\cite[(1.15) in Section~VI.1]{Kat1}, we deduce 
\[
    \abs{\scalar{B_n u}{v}_V}\le (1+\tan\theta)\eps_n\norm{u}_V\norm{v}_V
\]
for all $u,v\in V$. 
Hence 
\[
    \norm{T_n^{-1}}\le \frac{1}{\delta_n}\quad\text{and}\quad\norm{B_n}\le (1+\tan\theta)\eps_n
\]
for all $n\in\NN$.
So by~\eqref{eq:j-est} we have
\[
    \norm{(I+A_n)^{-1}f-(I+A)^{-1}f}^2_H = \norm{j(u_n) - j(u)}_H^2 \le (1+\tan\theta)^2\frac{\eps_n^2}{\delta_n}\norm{Z}^2\norm{f}^2_H.
\]
This shows that the resolvents converge in the operator norm.
\end{proof}

The following is an interesting special case of Theorem~\ref{thm:approx}.
\begin{corollary}
Assume $\form{a}$ and $j$ satisfy Conditions~\ref{en:ass-one} and~\ref{en:ass-two}.
Suppose $(\form{a},j)$ is associated with an \m-accretive operator~$A$.
For all $n\in\NN$ define the form $\form{a}_n\colon V\times V\to\CC$ by
\[
    \form{a}_n(u,v)=\form{a}(u,v) + \frac{1}{n}\scalar{u}{v}_V.
\]
Then $\form{a}$ is $j$-elliptic and continuous, and $(\form{a}_n,j)$ is associated with an \m-sectorial operator $A_n$ for all $n\in\NN$.
Moreover, $A_n\to A$ in norm resolvent sense.
\end{corollary}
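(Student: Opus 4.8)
The plan is to deduce this immediately from Theorem~\ref{thm:approx} by choosing $B_n=\frac{1}{n}I$ on $V$ for every $n\in\NN$. First I would note that $B_n=\frac{1}{n}I$ is a bounded, self-adjoint and strictly positive operator on $V$; in particular it is a bounded sectorial operator with vertex $0$ and semi-angle $\theta=0$, so the hypothesis on the operators $B_n$ in Theorem~\ref{thm:approx} is met with $\theta=0$.

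Next I would verify the two-sided bound~\eqref{eq:lowup-est-Bn}. Since $\Re\scalar{B_n u}{u}_V=\frac{1}{n}\norm{u}_V^2$ for all $u\in V$, this estimate holds with the choice $\delta_n=\eps_n=\frac{1}{n}$, and the required smallness condition is then immediate because $\eps_n^2/\delta_n=\frac{1}{n}\to 0$.

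Finally, observing that $\form{a}_n(u,v)=\form{a}(u,v)+\frac{1}{n}\scalar{u}{v}_V=\form{a}(u,v)+\scalar{B_n u}{v}_V$ coincides with the perturbed form appearing in Theorem~\ref{thm:approx}, all assertions follow at once: each $\form{a}_n$ is $j$-elliptic and continuous, $(\form{a}_n,j)$ is associated with an \m-sectorial operator $A_n$, and $A_n\to A$ in norm resolvent sense. There is no genuine obstacle here; the corollary is merely the specialisation of Theorem~\ref{thm:approx} to the most natural elliptic perturbation $B_n=\frac{1}{n}I$.
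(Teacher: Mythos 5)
Your proposal is correct and is exactly the argument the paper intends: the corollary is stated as a special case of Theorem~\ref{thm:approx}, obtained by taking $B_n=\frac{1}{n}I$, which is sectorial with vertex $0$ and semi-angle $\theta=0$ and satisfies~\eqref{eq:lowup-est-Bn} with $\delta_n=\eps_n=\frac{1}{n}$, so that $\eps_n^2/\delta_n=\frac{1}{n}\to 0$. Nothing further is needed.
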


\begin{remark}
We do not know whether the upper bound $\Re\scalar{B_n u}{u}\le M\eps_n\norm{u}_V^2$ in~\eqref{eq:lowup-est-Bn} can be relaxed to $\lim_{n\to\infty}\norm{B_n}=0$. This is possible if $T$ is invertible.

If $T$ is invertible, however, the proof of Theorem~\ref{thm:approx} can be greatly simplified.
In fact, suppose that $(\form{a},j)$ is associated with an \m-accretive operator $A$, 
that $T$ is invertible and that $(B_n)$ is a sequence of accretive, bounded operators such that $\lim_{n\to\infty}\norm{B_n}=0$. 
Moreover, suppose that $\Re\scalar{B_nu}{u}>0$ for all $n\in\NN$ and $u\in V$ with $u\ne 0$. 
The latter assumption ensures that $(\form{a}_n,j)$ is associated with an \m-accretive operator
$A_n$ for all $n\in\NN$; it is easily inferred from Example~\ref{ex:gen-inverse} that some additional assumption is required.
Then $T_n=T+B_n$ is accretive and invertible for large $n$, and $T_n^{-1}\to T^{-1}$ in the operator norm. 
Thus $(I+A_n)^{-1}= jT_n^{-1}j^*\to jT^{-1}j^*= (I+A)^{-1}$ in the operator norm.
\end{remark}

\section{The dual form \texorpdfstring{$\form{a}^*$}{}}\label{sec:dual-form}

Assume $\form{a}$ and $j$ satisfy Conditions~\ref{en:ass-one} and~\ref{en:ass-two}.
We define the \emphdef{dual form} $\form{a}^*\colon V\times V\to\CC$ by $\form{a}^*(u,v)=\conj{\form{a}(v,u)}$. Then obviously
$\form{a}^*$ is continuous and accretive. So $\form{a}^*$ and $j$ satisfy Conditions~\ref{en:ass-one} and~\ref{en:ass-two}.
While in the $j$-elliptic setting $(\form{a}^*,j)$ is always associated with the adjoint of the \m-sectorial operator associated with $(\form{a},j)$,
the following reconsideration of Example~\ref{ex:dd-acc-op} shows that this is no longer true in the accretive case, even if $j$ is injective.

\begin{example}\label{ex:astar-nonmacc}
Let $R$ be a densely defined, closed, accretive operator in a Hilbert space~$H$.
Equip $V\coloneqq D(R)$ with the inner product $\scalar{u}{v}_V=\scalar{Ru}{Rv}_H+\scalar{u}{v}_H$ and let $j\colon V\to H$ be the inclusion. 
Define $\form{a}\colon V\times V\to\CC$ by
\[
    \form{a}(u,v)=\scalar{Ru}{v}_H.
\]
We first prove that $D_j(\form{a}^*)=D(R)\cap D(R^*)$.
Let $u\in D_j(\form{a}^*)$. There exists an $f\in H$ such that $\form{a}^*(u,v)=\scalar{f}{j(v)}_H$ for all $v\in V$,
which means $\scalar{u}{Rv}_H=\scalar{f}{v}_H$ for all $v\in D(R)$. Hence $u\in D(R^*)$, and obviously we also have $u\in V=D(R)$.
For the converse direction let $u\in D(R)\cap D(R^*)$. Then 
\[
    \form{a}^*(u,v)=\scalar{u}{Rv}_H=\scalar{R^*u}{j(v)}_H
\]
for all $v\in D(R)=V$. Therefore $u\in D_j(\form{a}^*)$. 

It is now clear that $\restrict{R^*}{D(R)\cap D(R^*)}$ is the operator associated with $(\form{a}^*,j)$.

The \m-accretive operator $R\coloneqq -A$ from Example~\ref{ex:deriv-Rplus} satisfies $D(R^*)=\Hone[0,\infty]\not\subset \Honez[0,\infty]=D(R)$.
Thus $(\form{a}^*,j)$ can be associated with a proper restriction of $R^*$.
Moreover, there even exists an \m-sectorial accretive operator $R$ such that $D(R)\cap D(R^*)=\{0\}$.
An operator with the latter property can be readily obtained
by adapting the first part of the proof of~\cite[Theorem~3.6]{FW71:op-rg}.
For such a choice of $R$ the operator associated with $(\form{a}^*,j)$ is not even densely defined.
\end{example}

Still, if $(\form{a},j)$ is associated with an accretive operator, then also $(\form{a}^*,j)$ is associated with an accretive operator,
as the following proposition shows.
\begin{proposition}\label{prop:as}
Assume $\form{a}$ and $j$ satisfy Conditions~\ref{en:ass-one} and~\ref{en:ass-two}. Then the following statements hold.
\begin{enumerate}[\upshape (a)]
\labelformat{enumi}{\textup{(#1)}}
\item\label{en:Vas} $V_j(\form{a})\cap\ker j = V_j(\form{a}^*)\cap\ker j$.
\item\label{en:DHas} $D_j(\form{a})\cap\ker j = D_j(\form{a}^*) \cap\ker j$.
\item\label{en:as-welldef} The pair $(\form{a},j)$ is associated with an accretive operator if and only if $(\form{a}^*,j)$ is 
associated with an accretive operator.
\end{enumerate}
\end{proposition}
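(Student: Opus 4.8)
The plan is to reduce all three parts to a single observation about accretive forms together with the identity $\ker T=\ker T^*$. The observation is the following. Since $\form{a}$ is accretive, its symmetric part $\form{a}_s(u,v)=\tfrac12\bigl(\form{a}(u,v)+\conj{\form{a}(v,u)}\bigr)$ is a non-negative Hermitian sesquilinear form, so it obeys the Cauchy--Schwarz inequality $\abs{\form{a}_s(u,v)}^2\le\form{a}_s(u,u)\,\form{a}_s(v,v)$. As $\form{a}_s(u,u)=\Re\form{a}(u,u)$, this yields the key implication: if $\Re\form{a}(u,u)=0$ for some $u\in V$, then $\form{a}_s(u,v)=0$ for \emph{all} $v\in V$, that is,
\[
    \form{a}(u,v)=-\conj{\form{a}(v,u)}=-\form{a}^*(u,v)\qquad\text{for all }v\in V.
\]
I would record this as a short preliminary lemma, since it is precisely what interchanges $\form{a}$ and $\form{a}^*$ on the degenerate vectors that appear in $V_j(\form{a})\cap\ker j$ and $D_j(\form{a})\cap\ker j$.

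For~\ref{en:Vas}, let $u\in V_j(\form{a})\cap\ker j$. Testing the defining condition of $V_j(\form{a})$ against $v=u\in\ker j$ gives $\form{a}(u,u)=0$, so the key implication applies and $\form{a}(u,v)=-\conj{\form{a}(v,u)}$ for all $v\in V$. For $v\in\ker j$ the left-hand side vanishes, hence $\form{a}(v,u)=0$, i.e.\ $\form{a}^*(u,v)=0$ for all $v\in\ker j$; thus $u\in V_j(\form{a}^*)\cap\ker j$. Since $\form{a}^{**}=\form{a}$, the reverse inclusion follows by the same argument applied to $\form{a}^*$, giving equality.

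For~\ref{en:DHas}, let $u\in D_j(\form{a})\cap\ker j$ with representing element $f\in H$, so that $\form{a}(u,v)=\scalar{f}{j(v)}_H$ for all $v\in V$. Choosing $v=u$ and using $j(u)=0$ yields $\form{a}(u,u)=0$, so again $\form{a}(u,v)=-\conj{\form{a}(v,u)}$ for all $v$. Consequently $\form{a}^*(u,v)=\conj{\form{a}(v,u)}=-\form{a}(u,v)=\scalar{-f}{j(v)}_H$ for all $v\in V$, which shows $u\in D_j(\form{a}^*)$ with representing element $-f$. Hence $D_j(\form{a})\cap\ker j\subset D_j(\form{a}^*)\cap\ker j$, and equality follows by symmetry.

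For~\ref{en:as-welldef}, I would invoke the characterisation from Remark~\ref{rem:gen-complete} that $(\form{a},j)$ is associated with an accretive operator if and only if $D_j(\form{a})\cap\ker j=\ker T$, together with its counterpart for $\form{a}^*$: the operator representing $\form{b}^*=\form{a}^*+\scalar{j(\cdot)}{j(\cdot)}_H$ is $T^*$, so $(\form{a}^*,j)$ is associated with an accretive operator if and only if $D_j(\form{a}^*)\cap\ker j=\ker T^*$. The left-hand sides agree by part~\ref{en:DHas}, and the right-hand sides agree because $T$ is \m-accretive, whence $\ker T=\ker T^*$ by Proposition~\ref{prop:macc}\,\ref{en:ker-macc}. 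Thus one equality holds precisely when the other does, giving~\ref{en:as-welldef}. The only real obstacle is the initial lemma: one must see that the vanishing of $\Re\form{a}(u,u)$ forces the \emph{global} antisymmetry $\form{a}(u,\cdot)=-\form{a}^*(u,\cdot)$, not merely its restriction to $\ker j$; once this is secured, parts~\ref{en:Vas}--\ref{en:as-welldef} are routine, with $\ker T=\ker T^*$ serving as the second ingredient that links $\form{a}$ to $\form{a}^*$ at the operator level.
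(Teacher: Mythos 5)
Your proof is correct and takes essentially the same approach as the paper: your Cauchy--Schwarz lemma for the symmetric part $\form{a}_s$ is precisely the paper's observation that $T_0+T_0^*$ is positive semi-definite, so that $\Re\form{a}(u,u)=0$ forces $T_0u=-T_0^*u$, i.e.\ the global antisymmetry $\form{a}(u,\cdot)=-\form{a}^*(u,\cdot)$. Parts \ref{en:Vas} and \ref{en:DHas} then follow exactly as in the paper (up to which inclusion is proved directly and which by duality), and your part \ref{en:as-welldef} is the paper's argument combining part \ref{en:DHas}, the identity $\ker T=\ker T^*$ from Proposition~\ref{prop:macc}, and the characterisation stemming from Proposition~\ref{prop:gen-welldef}.
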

\begin{proof}
\begin{parenum}
\item[\ref{en:Vas}]
It suffices to prove $V_j(\form{a}^*)\cap\ker j\subset V_j(\form{a})$.
To this end, let $u\in V_j(\form{a}^*)\cap\ker j$. 
Then
\[
	\scalar{u}{T_0u}_V = \scalar{T_0^*u}{u}_V = \form{a}^*(u,u)=0.
\]
So $\scalar{(T_0+T_0^*)u}{u}_V=0$. Since $T_0+T_0^*$ is a positive (semi-definite) operator,
it follows that $u\in\ker (T_0+T_0^*)$, and hence $T_0 u = -T_0^*u$.
Therefore
\[
	\form{a}(u,v)=\scalar{T_0u}{v}_V=-\scalar{T_0^*u}{v}_V=-\form{a}^*(u,v)=0
\]
for all $v\in\ker j$. Thus $u\in V_j(\form{a})$.

\item[\ref{en:DHas}]
It suffices to prove $D_j(\form{a}^*)\cap\ker j\subset D_j(\form{a})$.
To this end, let $u\in D_j(\form{a}^*)\cap\ker j$. Then there exists an $f\in H$ such that $T_0^*u=j^*f$.
As $D_j(\form{a}^*)\subset V_j(\form{a}^*)$, it follows as in~\ref{en:Vas} that $u\in\ker (T_0+T_0^*)$.
Hence $T_0u=-T_0^*u= -j^*f\in\rg j^*$.
Therefore $u\in D_j(\form{a})$.

\item[\ref{en:as-welldef}]
This is a consequence of~\ref{en:DHas}, 
the equality $\ker T=\ker T^*$ and
Proposition~\ref{prop:gen-welldef}.
\qedhere
\end{parenum}
\end{proof}

\begin{corollary}
\label{cor:as-restrict} 
Assume $j(D_j(\form{a}))$ is dense in~$H$. Then $(\form{a},j)$ is associated with an accretive operator $A$,
and the operator associated with $(\form{a}^*,j)$ is a restriction of~$A^*$.
\end{corollary}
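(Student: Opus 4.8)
The plan is to produce both operators from results already in hand and then verify the inclusion $A'\subset A^*$ directly from the defining form identities. First I would note that, since $j(D_j(\form{a}))$ is dense in $H$, Lemma~\ref{lem:dense-welldef} yields an accretive operator $A$ associated with $(\form{a},j)$; moreover $D(A)=j(D_j(\form{a}))$ is dense, so the Hilbert space adjoint $A^*$ is well-defined. Proposition~\ref{prop:as}\,\ref{en:as-welldef} then provides an accretive operator, which I would denote $A'$, associated with $(\form{a}^*,j)$. The whole task thus reduces to showing that $A'$ is a restriction of $A^*$.

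Next I would unwind the two definitions. Given $y\in D(A')$, I would write $y=j(u)$ with $u\in D_j(\form{a}^*)$ and $A'y=g$, so that $\form{a}^*(u,v)=\scalar{g}{j(v)}_H$ for all $v\in V$. Conjugating and using $\form{a}^*(u,v)=\conj{\form{a}(v,u)}$ gives the key identity $\form{a}(v,u)=\scalar{j(v)}{g}_H$ for all $v\in V$. On the other side, for arbitrary $x\in D(A)$ I would write $x=j(w)$ with $w\in D_j(\form{a})$; the defining property of $A$ from Proposition~\ref{prop:gen-welldef}\,\ref{en:part-ex:op} then gives $\form{a}(w,u)=\scalar{Aj(w)}{j(u)}_H=\scalar{Ax}{y}_H$.

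The crux is to combine these two relations by setting $v=w$ in the first identity, which yields $\scalar{Ax}{y}_H=\form{a}(w,u)=\scalar{j(w)}{g}_H=\scalar{x}{g}_H$ for every $x\in D(A)$. Since $A$ is densely defined, this is precisely the assertion that $y\in D(A^*)$ with $A^*y=g=A'y$. As $y\in D(A')$ is arbitrary, I would conclude $A'\subset A^*$, i.e. the operator associated with $(\form{a}^*,j)$ is a restriction of $A^*$.

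I expect no serious obstacle here: the argument is essentially bookkeeping with the sesquilinear-form identities, its only substantial inputs being the existence of the two associated operators and the density of $D(A)$, both already available. The single point demanding care is the complex conjugation when passing between $\form{a}$ and $\form{a}^*$, checking that the inner-product slots line up so that the final pairing produces $\scalar{x}{g}_H$ rather than its conjugate.
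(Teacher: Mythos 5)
Your proof is correct and takes essentially the same route as the paper: both obtain $A$ from Lemma~\ref{lem:dense-welldef} and the operator associated with $(\form{a}^*,j)$ from Proposition~\ref{prop:as}\,\ref{en:as-welldef}, and then combine the two defining form identities through the conjugation relation $\form{a}^*(u,v)=\conj{\form{a}(v,u)}$ to read off membership in $D(A^*)$. The paper merely compresses your two-step pairing into the single chain $\scalar{j(u)}{Aj(v)}_H = \conj{\form{a}(v,u)} = \form{a}^*(u,v) = \scalar{A_1 j(u)}{j(v)}_H$ for $v\in D_j(\form{a})$.
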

\begin{proof}
The first statement follows from Lemma~\ref{lem:dense-welldef}.
Then also $(\form{a}^*,j)$ is associated with an accretive operator $A_1$ by Lemma~\ref{prop:as}\,\ref{en:as-welldef}.
Let $x\in D(A_1)$, and let $u\in D_j(\form{a}^*)$ be such that $j(u)=x$.
Then for all $v\in D_j(\form{a})$ we obtain
\[
    \scalar{j(u)}{Aj(v)}_H = \conj{\form{a}(v,u)} = \form{a}^*(u,v) = \scalar{A_1 j(u)}{j(v)}_H.
\]
This shows that $x=j(u)\in D(A^*)$ and $A_1x=A^*x$.
\end{proof}

Together with Example~\ref{ex:astar-nonmacc}, the above corollary illustrates that even if $(\form{a},j)$ is associated with an \m-sectorial accretive operator $A$,
the operator associated with $(\form{a}^*,j)$ in general is merely a proper restriction of $A^*$ and thus not \m-accretive.
If $(\form{a},j)$ is associated with an accretive operator $A$ and the operator $T$ is invertible, then the dichotomy of Example~\ref{ex:astar-nonmacc} does not occur.
In fact, in this case also $T^*$ is invertible, and hence the operator $A_1$ associated with $(\form{a}^*,j)$ is \m-accretive.
Since $A^*$ is an accretive extension of $A_1$ by Corollary~\ref{cor:as-restrict}, 
it then follows from the maximality of $A_1$ that $A_1=A^*$.

We close this section with an easy observation connecting the radicals of $\form{a}$ and $\form{a}^*$. The (left) \emphdef{radical} of $\form{a}$ is defined by
\[
    R(\form{a})\coloneqq\{u\in V: \form{a}(u,v)=0\text{ for all $v\in V$}\}.
\] 
Clearly $R(\form{a})$ is closed and $R(\form{a})=\ker T_0$. Since $\ker T_0=\ker T_0^*$, we obtain $R(\form{a})=R(\form{a}^*)$.
This in particular shows that the left radical of $\form{a}$ agrees with the right radical.

\section{The \texorpdfstring{\McIntosh}{McIntosh} condition}\label{sec:mcintosh-cond}

Recall that we always assume that $\form{a}$ and $j$ satisfy Conditions~\ref{en:ass-one} and~\ref{en:ass-two}.
Consider the following condition that was introduced by \McIntosh in~\cite{McIntosh1968:repres} in the setting of injective $j$:
\begin{enumerate}[\upshape (I)]
\setcounter{enumi}{2}
\labelformat{enumi}{\textup{(#1)}}
\item\label{en:ass-three} There exists a $\mu>0$ such that
\[
    \sup_{\substack{v\in V\\\norm{v}_V\le 1}}\abs{\form{a}(u,v)+\scalar{j(u)}{j(v)}_H}\ge \mu\norm{u}_V
\]
for all $u\in V$.
\end{enumerate}
It is easy to verify that~\ref{en:ass-three} is valid if and only if $T$ is invertible:
the left hand side of the inequality equals $\norm{Tu}_V$, so~\ref{en:ass-three} holds if and only
if the operator is injective and has closed range.
However, one has $(\rg T)^\perp=\ker T^*=\ker T$ by the \m-accretivity of $T$, whence $T$ 
is injective if and only if $T$ has dense range.
As a consequence, Condition~\ref{en:ass-three} is satisfied if $T_0$ is invertible
since then $T$ is invertible by Proposition~\ref{prop:pert-invertible}.

If Conditions~\ref{en:ass-one}, \ref{en:ass-two} and~\ref{en:ass-three} are satisfied and $(\form{a},j)$ is 
associated with an accretive operator, then the associated operator is \m-accretive.
This follows immediately from Theorem~\ref{thm:gen-complete}.

Note that if there exists a $\rho>0$ such that $\abs{\form{b}(u,u)}\ge\rho\norm{u}_V^2$ for all $u\in V$, then Condition~\ref{en:ass-three} is valid with $\mu=\rho$.

The following example shows that admitting a non-injective map $j$ in Condition~\ref{en:ass-three} 
allows a variety of new phenomena that do not occur
in the embedded accretive case (and also not in the $j$-elliptic case).
It is particularly remarkable that in this example $V_j(\form{a})=D_j(\form{a})$ while
the associated operator can be unbounded and \m-accretive.
In both the $j$-elliptic setting of Theorem~\ref{thm:ate} and the embedded accretive 
setting of Theorem~\ref{thm:mcintosh} the property $V_j(\form{a})=D_j(\form{a})$ implies that
the associated operator is bounded.
In the $j$-elliptic case, this follows from an inspection of the proof of~\cite[Theorem~VI.2.1\,(ii)]{Kat1}
together with~\cite[Theorem~2.5\,(ii)]{AtE12:sect-form}.
In the embedded accretive setting of Theorem~\ref{thm:mcintosh},
it is a consequence of Lemma~\ref{lem:DHa-eq-Va} given below. 
\begin{example}\label{ex:gen-inverse}
Let $H$ be a Hilbert space and $B\in\Linop(H)$ an accretive operator. Let $V=H\times H$ and define $j\in\Linop(V,H)$ by $j(u)=u_2$.
Define the sesquilinear form $\form{a}\colon V\times V\to\CC$ by
\[
    \form{a}(u,v)=\begingroup\renewcommand{\mid}{\biggm|}\scalar{\begin{pmatrix} B & -I\\ I & 0\end{pmatrix}u}{v}_V\endgroup=\scalar{Bu_1}{v_1}_H - \scalar{u_2}{v_1}_H + \scalar{u_1}{v_2}_H.
\]
Then $\form{a}$ and $j$ satisfy Conditions~\ref{en:ass-one} and~\ref{en:ass-two}.
Moreover, $T=\bigl(\begin{smallmatrix} B & -I\\ I & I\end{smallmatrix}\bigr)$, 
whence $T$ is invertible with $T^{-1}=(I+B)^{-1}\bigl(\begin{smallmatrix} I & I\\ -I & B\end{smallmatrix}\bigr)$. Thus also
Condition~\ref{en:ass-three} is satisfied.

Note that $j^*(v)=(0,v)$ for all $v\in H$. It follows that $\rg j^*$ is closed and $\rg j^*=(\ker j)^\perp$. Therefore $V_j(\form{a})=D_j(\form{a})$ (cf.\ Lemma~\ref{lem:DHa-formula}).
We have $V_j(\form{a}) = \{u\in V: \scalar{Bu_1-u_2}{v_1}_H=0\text{ for all $v_1\in H$}\}=\graph B$, the graph of~$B$. 
Hence by Condition~\ref{en:part-ex:wd} in Proposition~\ref{prop:gen-welldef}, $(\form{a},j)$ is associated with an accretive operator if and only if $B$ is injective.
Moreover $V_j(\form{a})+\ker j=H\times \rg B$.
This shows that $V_j(\form{a})+\ker j=V$ if and only if $B$ is surjective.

Assume that $B$ is injective. Then the associated operator $A$ is \m-accretive, $D(A)=j(V_j(\form{a}))=\rg B$ and $(I+A)B = (I+B)$.
Therefore $A=B^{-1}$.

In order to obtain an example where Condition~\ref{en:ass-three} holds, but where there does not exist a $\rho>0$ such that $\abs{\form{b}(u,u)}\ge\rho\norm{u}_V^2$ for all $u\in V$, we choose an 
injective positive operator $B\in\Linop(H)$ that is not invertible. Then
$B$ has dense range, but $\rg B$ is not closed. In particular, there does not exist a $\rho>0$ with the aforementioned property.
Clearly $\form{a}$ and $j$ satisfy Condition~\ref{en:ass-three}, and $\form{b}(u,u)=0$ implies $u=0$.
So $(\form{a},j)$ is associated with an unbounded \m-accretive operator.
Moreover, $V_j(\form{a})+\ker j\ne V$ and $H=j(V)\ne j(V_j(\form{a}))=\rg B$.
\end{example}

We are interested in when $D_j(\form{a})=V_j(\form{a})$, which occurs in Examples~\ref{ex:multival}, \ref{ex:gen-inverse} and~\ref{ex:deriv-H1}, for example.
This is equivalent to $(\ker j)^\perp\cap\rg T=\rg j^*\cap\rg T$. So if Condition~\ref{en:ass-three} holds,
then this is equivalent to $\rg j^*$ being closed. 
By Banach's closed range theorem, see~\cite[Theorem~IV.5.13]{Kat1}, $\rg j^*$ is closed if and only if $\rg j$ is closed. 
Since the range of $j$ is dense in $H$ by Condition~\ref{en:ass-two}, we have proved the following lemma.
\begin{lemma}\label{lem:DHa-eq-Va}
Suppose Condition~\ref{en:ass-three} is satisfied. Then $D_j(\form{a})=V_j(\form{a})$ if and only if $j$ is surjective.
\end{lemma}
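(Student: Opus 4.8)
The plan is to reduce the identity $D_j(\form{a})=V_j(\form{a})$ to a closedness statement about $\rg j^*$, and then to transfer that closedness to $\rg j$ via Banach's closed range theorem. The crucial input is that, as noted immediately after its formulation, Condition~\ref{en:ass-three} is equivalent to the invertibility of $T$; thus throughout I may treat $T$ as a bounded bijection of $V$ with bounded inverse.

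First I would express both subspaces as preimages under $T$. Lemma~\ref{lem:DHa-formula} gives $D_j(\form{a})=T^{-1}[\rg j^*]$, and the description of $V_j(\form{a})$ recorded above gives $V_j(\form{a})=T^{-1}[(\ker j)^\perp]$. Since $T$ is bijective (so that $T^{-1}$ is injective and $\rg T=V$), I can apply $T$ to both sides and conclude that $D_j(\form{a})=V_j(\form{a})$ holds if and only if $\rg j^*=(\ker j)^\perp$.

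Next I would invoke the standard identity $\clos{\rg j^*}=(\ker j)^\perp$, valid for every $j\in\Linop(V,H)$. As $\rg j^*$ is thus always a dense subspace of $(\ker j)^\perp$, the equality $\rg j^*=(\ker j)^\perp$ is equivalent to $\rg j^*$ being closed. Banach's closed range theorem \cite[Theorem~IV.5.13]{Kat1} then shows that $\rg j^*$ is closed if and only if $\rg j$ is closed. Finally, Condition~\ref{en:ass-two} asserts that $\rg j$ is dense in $H$, and a dense subspace is closed precisely when it is all of $H$; hence $\rg j$ is closed if and only if $j$ is surjective. Concatenating these equivalences yields the lemma.

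I do not anticipate a genuine obstacle, since each link in the chain is either a recorded formula or a classical theorem. The one point deserving attention is the reduction in the second step: it is exactly the invertibility of $T$ (equivalently $\rg T=V$) that allows one to pass from equality of the preimages to the equality $\rg j^*=(\ker j)^\perp$. For a merely accretive $T$ one only obtains the weaker criterion $(\ker j)^\perp\cap\rg T=\rg j^*\cap\rg T$, so Condition~\ref{en:ass-three} is essential and cannot be dropped.
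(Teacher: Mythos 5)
Your proposal is correct and follows essentially the same route as the paper: both reduce the equality $D_j(\form{a})=V_j(\form{a})$ via Lemma~\ref{lem:DHa-formula} and the identity $V_j(\form{a})=T^{-1}\bigl[(\ker j)^\perp\bigr]$ to the closedness of $\rg j^*$ (using that Condition~\ref{en:ass-three} makes $T$ invertible), and then conclude with Banach's closed range theorem and the density of $\rg j$. Your closing remark about the weaker criterion $(\ker j)^\perp\cap\rg T=\rg j^*\cap\rg T$ for non-invertible $T$ is precisely the general statement the paper records before specialising.
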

If $j$ is in addition injective in Lemma~\ref{lem:DHa-eq-Va},
it follows that $D_j(\form{a})=V_j(\form{a})=V$ if and only if $j$ is an isomorphism. So for an embedded accretive form that satisfies Condition~\ref{en:ass-three} one has $D_j(\form{a})=V_j(\form{a})$ if and only if the associated operator is bounded.

\begin{lemma}\label{lem:cond3}
Suppose Condition~\ref{en:ass-three} is satisfied. Then one has the following.
\begin{enumerate}[\upshape (a)]
\labelformat{enumi}{\textup{(#1)}}
\item\label{en:c3:denseVa} $D_j(\form{a})$ is dense in $V_j(\form{a})$.
\item\label{en:c3:idTTs} $T(V_j(\form{a})\cap\ker j) = T^*(V_j(\form{a}^*)\cap\ker j)$.
\item\label{en:c3:ident} $T(V_j(\form{a})\cap\ker j)=\bigl(V_j(\form{a})+\ker j\bigr)^\perp$.
\item\label{en:c3:denseV} $V_j(\form{a})+\ker j$ is dense in $V$ if and only if $V_j(\form{a})\cap\ker j=\{0\}$.
\item\label{en:c3:VaVas} $\clos{V_j(\form{a})+\ker j}=\clos{V_j(\form{a}^*)+\ker j}$.
\end{enumerate}
\end{lemma}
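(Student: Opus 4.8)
The plan is to use throughout that Condition~\ref{en:ass-three} is equivalent to invertibility of $T$, hence also of $T^*$, so that both $T$ and $T^*$ are homeomorphisms of $V$ onto itself. Combined with the identities $D_j(\form{a})=T^{-1}[\rg j^*]$ (Lemma~\ref{lem:DHa-formula}) and $V_j(\form{a})=T^{-1}\bigl[(\ker j)^\perp\bigr]=(T^*\ker j)^\perp$, together with $\clos{\rg j^*}=(\ker j)^\perp$ (which holds since $(\rg j^*)^\perp=\ker j$), this reduces most of the work to elementary manipulations with orthogonal complements. Write $W\coloneqq V_j(\form{a})\cap\ker j$; by Proposition~\ref{prop:as}\,\ref{en:Vas} we also have $W=V_j(\form{a}^*)\cap\ker j$.

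For \ref{en:c3:denseVa} I would simply apply the homeomorphism $T^{-1}$ to the dense inclusion $\rg j^*\subset(\ker j)^\perp$: since $T^{-1}$ carries closures to closures, $\clos{D_j(\form{a})}=\clos{T^{-1}[\rg j^*]}=T^{-1}\bigl[\clos{\rg j^*}\bigr]=T^{-1}\bigl[(\ker j)^\perp\bigr]=V_j(\form{a})$. Part \ref{en:c3:idTTs} is the heart of the matter. For $u\in W$ I would use that $u\in\ker j\cap V_j(\form{a})$ allows the choice $v=u$ in the condition defining $V_j(\form{a})$, giving $\form{a}(u,u)=0$; hence $\scalar{(T_0+T_0^*)u}{u}_V=2\Re\form{a}(u,u)=0$, and positivity of $T_0+T_0^*$ forces $T_0u=-T_0^*u$, exactly as in the proof of Proposition~\ref{prop:as}\,\ref{en:Vas}. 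Since $u\in\ker j$ gives $Tu=T_0u$ and $T^*u=T_0^*u$, this yields $Tu=-T^*u$ for every $u\in W$. As $W$ is a subspace, $-W=W$, so $T(W)=T^*(W)$, which is the assertion.

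With \ref{en:c3:idTTs} in hand the remaining parts are bookkeeping. For \ref{en:c3:ident} I would compute $(V_j(\form{a})+\ker j)^\perp=V_j(\form{a})^\perp\cap(\ker j)^\perp$; here $V_j(\form{a})=(T^*\ker j)^\perp$ and, because $T^*$ is a homeomorphism, $T^*(\ker j)$ is closed, so $V_j(\form{a})^\perp=T^*(\ker j)$. Thus $(V_j(\form{a})+\ker j)^\perp=T^*(\ker j)\cap(\ker j)^\perp=T^*\bigl(V_j(\form{a}^*)\cap\ker j\bigr)$, which by \ref{en:c3:idTTs} equals $T\bigl(V_j(\form{a})\cap\ker j\bigr)$. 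Part \ref{en:c3:denseV} is then immediate: density of $V_j(\form{a})+\ker j$ means $(V_j(\form{a})+\ker j)^\perp=\{0\}$, and by \ref{en:c3:ident} this set is $T(W)$, which vanishes if and only if $W=\{0\}$ by injectivity of $T$. Finally, for \ref{en:c3:VaVas} I would apply \ref{en:c3:ident} both to $\form{a}$ and to $\form{a}^*$ (the latter again satisfies Condition~\ref{en:ass-three} since $T^*$ is invertible), giving $(V_j(\form{a})+\ker j)^\perp=T(W)$ and $(V_j(\form{a}^*)+\ker j)^\perp=T^*(W)$; these coincide by \ref{en:c3:idTTs}, and taking orthogonal complements gives the claimed equality of closures.

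The main obstacle is part \ref{en:c3:idTTs}: establishing the sign relation $Tu=-T^*u$ on $W$, for which the positivity of $T_0+T_0^*$ and the fact that elements of $W$ lie in $\ker j$ (so that $T$ and $T_0$, resp.\ $T^*$ and $T_0^*$, agree on them) are both essential. Everything else is manipulation of orthogonal complements, made clean by the closedness of $T^*(\ker j)$ that invertibility of $T^*$ provides.
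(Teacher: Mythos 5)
Your proof is correct, and for parts \ref{en:c3:denseVa}, \ref{en:c3:ident}, \ref{en:c3:denseV} and \ref{en:c3:VaVas} it coincides with the paper's orthogonal-complement bookkeeping; the genuine difference is in part \ref{en:c3:idTTs}. The paper reduces that identity to $(\ker j)^\perp\cap T\ker j=(\ker j)^\perp\cap T^*\ker j$ and, for $u$ in the left-hand side, applies \eqref{eq:ker-b} to $v=(T^*)^{-1}u$, computing $\scalar{Tv}{v}_V=\scalar{u}{T^{-1}u}_V=0$; so the inverses of $T$ and $T^*$ enter its argument in an essential way. You instead prove the pointwise relation $Tu=-T^*u$ for every $u\in W\coloneqq V_j(\form{a})\cap\ker j$ (via $\form{a}(u,u)=0$, positivity of $T_0+T_0^*$, and the fact that $T$ and $T_0$, respectively $T^*$ and $T_0^*$, agree on $\ker j$), deduce $T(W)=T^*(W)$ because $-W=W$, and identify $W$ with $V_j(\form{a}^*)\cap\ker j$ by Proposition~\ref{prop:as}\,\ref{en:Vas}. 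Your route buys two things: it shows that \ref{en:c3:idTTs} holds under Conditions~\ref{en:ass-one} and~\ref{en:ass-two} alone, with no use of Condition~\ref{en:ass-three}, and it yields the finer pointwise statement $Tu=-T^*u$ on $W$ rather than only equality of the image sets. The paper's route, in exchange, is self-contained (no appeal to Proposition~\ref{prop:as}) and produces the identity directly in the form $(\ker j)^\perp\cap T^*\ker j$ in which it is then consumed in part \ref{en:c3:ident}. The only step you should make explicit is the equality $T^*(\ker j)\cap(\ker j)^\perp=T^*\bigl(V_j(\form{a}^*)\cap\ker j\bigr)$ in your part \ref{en:c3:ident}: it rests on the dual identity $V_j(\form{a}^*)=(T^*)^{-1}\bigl[(\ker j)^\perp\bigr]$, which holds because the operator representing $\form{b}^*$ is $T^*$, but this deserves a word of justification.
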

\begin{proof}
\begin{parenum}
\item[\ref{en:c3:denseVa}] Since $D_j(\form{a})=T^{-1}\rg j^*$ and $V_j(\form{a})=T^{-1}\bigl((\ker j)^\perp\bigr)$, the
statement follows from the continuity of $T^{-1}$ and the density of $\rg j^*$ in $(\ker j)^\perp$.

\item[\ref{en:c3:idTTs}] 
Note that the identity in~\ref{en:c3:idTTs} is equivalent to 
\[
    (\ker j)^\perp\cap T\ker j = (\ker j)^\perp\cap T^*\ker j.
\]
Thus it suffices to show that $(\ker j)^\perp\cap T\ker j \subset T^*\ker j$.
Let $u\in (\ker j)^\perp\cap T\ker j$. Define $v\coloneqq (T^*)^{-1}u$. Then
\[
    \scalar{Tv}{v}_V=\scalar{v}{T^*v}_V=\scalar{(T^*)^{-1}u}{u}_V = \scalar{u}{T^{-1}u}_V = 0
\]
since $T^{-1}u\in\ker j$ and $u\in(\ker j)^\perp$. Hence $v\in\ker j$ by~\eqref{eq:ker-b}
and thus $u\in T^*\ker j$.

\item[\ref{en:c3:ident}] 
It follows from~\ref{en:c3:idTTs} that
\[
	T(V_j(\form{a})\cap\ker j) = T^*(V_j(\form{a}^*)\cap\ker j) = (\ker j)^\perp\cap T^*\ker j.
\]
As $T^*\ker j = V_j(\form{a})^\perp$, one obtains
\[
	T(V_j(\form{a})\cap\ker j) = (\ker j)^\perp\cap V_j(\form{a})^\perp = (V_j(\form{a})+\ker j)^\perp.
\]

\item[\ref{en:c3:denseV}] This follows immediately from~\ref{en:c3:ident}.
\item[\ref{en:c3:VaVas}] This statement follows from applying~\ref{en:c3:ident} on both sides of~\ref{en:c3:idTTs} and taking the orthogonal complement.
\qedhere
\end{parenum}
\end{proof}
\begin{remark}
In general, the closures in Lemma~\ref{lem:cond3}\,\ref{en:c3:VaVas} cannot be omitted.
This is readily observed if one chooses an accretive operator $B\in\Linop(H)$ in Example~\ref{ex:gen-inverse} that satisfies $\rg B\ne\rg B^*$.
\end{remark}

If Condition~\ref{en:ass-three} holds, then $D_j(\form{a})$ is dense in $V_j(\form{a})$, 
so $V_j(\form{a})$ is in some sense a canonical choice of $W$ in Corollary~\ref{cor:macc-restrict}.
It is therefore natural to investigate when 
Condition~\ref{en:ass-three} is still satisfied if one restricts to $V_j(\form{a})$.
\begin{proposition}
\label{prop:restrict-cond3}
Suppose $\form{a}$ and $j$ satisfy Conditions~\ref{en:ass-one}, \ref{en:ass-two} and~\ref{en:ass-three}. 
Let $\form{\hat{a}}\coloneqq\restrict{\form{a}}{V_j(\form{a})\times V_j(\form{a})}$ and $\hat{j}\coloneqq\restrict{j}{V_j(\form{a})}$.
Then $\form{\hat{a}}$ and $\hat{j}$ satisfy Condition~\ref{en:ass-three} 
if and only if $V_j(\form{a})+\ker j = V$.
If these equivalent statements hold, then $V_j(\form{a})\oplus\ker j=V$.
\end{proposition}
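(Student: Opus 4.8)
The plan is to translate everything into the bounded operator $\widehat T\in\Linop(W)$ representing the restricted form on $W:=V_j(\form{a})$. Writing $N:=\ker j$ and letting $P_W$ denote the orthogonal projection of $V$ onto $W$, one checks from $\form{\hat a}(u,v)+\scalar{\hat j(u)}{\hat j(v)}_H=\form{b}(u,v)$ for $u,v\in W$ that $\widehat T=P_W\restrict{T}{W}$. Since this form is continuous and accretive, $\widehat T$ is \m-accretive, so exactly as established for Condition~\ref{en:ass-three} the pair $(\form{\hat a},\hat j)$ satisfies~\ref{en:ass-three} if and only if $\widehat T$ is invertible. Because $T$ is invertible (as~\ref{en:ass-three} holds) and $W=T^{-1}[(\ker j)^\perp]$, I record the two identities used throughout: $T(W)=N^\perp$, and hence $\rg\widehat T=P_W\bigl(T(W)\bigr)=P_W(N^\perp)$.

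First I would compute $\ker\widehat T$. For $u\in W$ one has $\widehat Tu=0$ iff $Tu\in W^\perp$; since also $Tu\in N^\perp$ (because $u\in W=T^{-1}[N^\perp]$), this forces $Tu\in W^\perp\cap N^\perp=(W+N)^\perp$. By Lemma~\ref{lem:cond3}\,\ref{en:c3:ident} the latter equals $T(W\cap N)$, and injectivity of $T$ then gives $u\in W\cap N$; the reverse inclusion is immediate from the same lemma. Thus $\ker\widehat T=W\cap N=V_j(\form{a})\cap\ker j$, which by Lemma~\ref{lem:cond3}\,\ref{en:c3:denseV} vanishes precisely when $V_j(\form{a})+\ker j$ is dense in $V$.

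The heart of the argument is surjectivity of $\widehat T$, i.e.\ $P_W(N^\perp)=W$, which I would tie to the splitting $W\oplus N=V$ by Hilbert-space duality of complemented pairs: if $V=M\oplus M'$ is a (not necessarily orthogonal) topological direct sum of closed subspaces, then the adjoint of the associated bounded projection splits $V=(M')^\perp\oplus M^\perp$, whence $P_M((M')^\perp)=M$. For the direction $W+N=V\Rightarrow$~\ref{en:ass-three}, note that $(W+N)^\perp=\{0\}$ gives $W\cap N=\{0\}$ by Lemma~\ref{lem:cond3}\,\ref{en:c3:ident}, so $W\oplus N=V$ is a topological direct sum by the bounded inverse theorem; dualising yields $N^\perp\oplus W^\perp=V$ and hence $P_W(N^\perp)=W$, so $\rg\widehat T=W$. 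Together with $\ker\widehat T=W\cap N=\{0\}$ this makes $\widehat T$ bijective, hence invertible; this also proves the final assertion $V_j(\form{a})\oplus\ker j=V$. Conversely, if $\widehat T$ is invertible then $\ker\widehat T=W\cap N=\{0\}$, so $W+N$ is dense and $W^\perp\cap N^\perp=(W+N)^\perp=\{0\}$; moreover $\rg\widehat T=W$ means $P_W(N^\perp)=W$, so for each $x\in V$ the element $P_Wx$ has a preimage $z\in N^\perp$ with $P_Wz=P_Wx$, giving $x=z+(x-z)\in N^\perp+W^\perp$. Thus $N^\perp+W^\perp=V$, which with $N^\perp\cap W^\perp=\{0\}$ is a topological direct sum; dualising back gives $W\oplus N=V$, in particular $W+N=V$.

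The routine parts are the representation $\widehat T=P_W\restrict{T}{W}$ and the bookkeeping with orthogonal complements. The step I expect to be the crux is the equivalence between surjectivity of $\widehat T$ (equivalently $P_W(N^\perp)=W$) and the algebraic identity $W+N=V$: this is where the duality $V=M\oplus M'\Leftrightarrow V=(M')^\perp\oplus M^\perp$ for topological direct sums does the real work, and where one must take care that the mere \emph{density} of $W+N$ supplied by $\ker\widehat T=\{0\}$ is genuinely upgraded to \emph{equality} by the surjectivity information.
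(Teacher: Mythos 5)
Your proof is correct, and it takes a genuinely different route from the paper's. Writing $W=V_j(\form{a})$ and $N=\ker j$, you reduce both implications to bijectivity of $\widehat{T}=P_W\restrict{T}{W}$: the identity $\ker\widehat{T}=W\cap N$ (obtained from Lemma~\ref{lem:cond3}\,\ref{en:c3:ident} and injectivity of $T$) together with $\rg\widehat{T}=P_W(N^\perp)$ turns Condition~\ref{en:ass-three} for $(\form{\hat{a}},\hat{j})$ into the statement that $P_W$ maps $N^\perp$ onto $W$, which you then convert into the splitting $W\oplus N=V$ by dualising bounded projections (the equivalence of $V=M\oplus M'$ with $V=(M')^\perp\oplus M^\perp$ for closed complemented subspaces). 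In particular, the crux you flagged, namely upgrading the density of $W+N$ supplied by $\ker\widehat{T}=\{0\}$ to equality, is handled soundly by combining surjectivity of $\widehat{T}$ with this duality. The paper argues asymmetrically instead: for the implication from Condition~\ref{en:ass-three} for $(\form{\hat{a}},\hat{j})$ to $W+N=V$ it constructs the explicit operator $P=(TJ\widehat{T}^{-1})^*$, checks $\ker P=N$ and $\restrict{P}{W}=I$, concludes that the algebraic direct sum $W\oplus N$ is closed, and finishes with the density criterion of Lemma~\ref{lem:cond3}\,\ref{en:c3:denseV}; for the converse it takes the oblique projection onto $W$ along $N$ (bounded by the closed graph theorem) and verifies the defining inequality of Condition~\ref{en:ass-three} directly via $\form{b}(u,v)=\form{b}(u,Pv)$, which yields the explicit constant $\mu/(2\norm{P})$. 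What each approach buys: yours is uniform and purely geometric, with the useful general byproduct $\ker\widehat{T}=V_j(\form{a})\cap\ker j$; the paper's produces concrete data, namely a formula for the projection along $\ker j$ in terms of $\widehat{T}^{-1}$ and a quantitative lower bound for the restricted form, at the price of two separate arguments.
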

\begin{proof}
We note that $\widehat{T}$ can be defined for $(\form{\hat{a}},\hat{j})$ as in~\eqref{eq:def-T} since $\form{\hat{a}}$ satisfies Condition~\ref{en:ass-one}.
\begin{asparaenum}
\item[`$\Rightarrow$':] Assume that $\form{\hat{a}}$ and $\hat{j}$ satisfy Condition~\ref{en:ass-three}, i.e., $\widehat{T}$ is invertible.
Let $J\colon V_j(\form{a})\hookrightarrow V$ be the natural embedding.
Define $P\coloneqq (TJ\widehat{T}^{-1})^*\colon V\to V_j(\form{a})$. 
Then $\rg P^* = T V_j(\form{a}) = (\ker j)^\perp$, and hence $\ker P=\ker j$.
For all $u,v\in V_j(\form{a})$ we have
\[
    \scalar{u}{Pv}_{V_j(\form{a})} = \form{b}(\widehat{T}^{-1}u,v)=\form{\hat{b}}(\widehat{T}^{-1}u,v)=\scalar{u}{v}_{V_j(\form{a})},
\]
whence $\restrict{P}{V_j(\form{a})}=I$. 
This shows that $V_j(\form{a})\cap\ker j=\{0\}$ and that $\restrict{P}{V_j(\form{a})+\ker j}$
is the projection onto $V_j(\form{a})$ along $\ker j$. 
Since $P$ is continuous and both $V_j(\form{a})$ and $\ker j$ are closed,
the direct sum $V_j(\form{a})\oplus\ker j$ is closed.
Now we apply Lemma~\ref{lem:cond3}~\ref{en:c3:denseV} to obtain $V_j(\form{a})+\ker j=V$.

\item[`$\Leftarrow$':] Assume that $V_j(\form{a})+\ker j=V$. 
Lemma~\ref{lem:cond3}~\ref{en:c3:denseV} yields that $V_j(\form{a})\cap\ker j=\{0\}$.
Let $P\colon V\to V_j(\form{a})$ be the projection along $\ker j$. 
It follows from the closed graph theorem that $P$ is bounded.
Let $\eps=\mu/2>0$, where $\mu$ is the constant from Condition~\ref{en:ass-three} for $\form{a}$.
Let $u\in V_j(\form{a})\setminus\{0\}$. Then there exists a $v\in V\setminus\{0\}$ such that
$\abs{\form{b}(u,v)}\ge\eps\norm{u}_V\norm{v}_V$. Since $v-Pv\in\ker j$ and $u\in V_j(\form{a})$, we obtain $\form{b}(u,v)=\form{b}(u,Pv)$.
Since $\form{b}(u,v)\ne 0$, this implies that $Pv\ne 0$. Moreover,
\[
    \abs{\form{b}(u,Pv)}\ge\eps\norm{u}_V\norm{v}_V\ge \eps\delta\norm{u}_{V_j(\form{a})}\norm{Pv}_{V_j(\form{a})},
\]
where $\delta=\norm{P}^{-1}$. This shows that $\form{\hat{a}}$ and $\hat{j}$ satisfy Condition~\ref{en:ass-three}.\qedhere
\end{asparaenum}
\end{proof}

\begin{remark}
The assumption that $\form{a}$ and $j$ satisfy Conditions~\ref{en:ass-one}, \ref{en:ass-two} and~\ref{en:ass-three} in Proposition~\ref{prop:restrict-cond3} does not imply
that $j(V_j(\form{a}))$ is dense in $H$, i.e., $\hat{j}$ does not need to satisfy Condition~\ref{en:ass-two}, cf.~Example~\ref{ex:multival}. 
However, if also $\form{\hat{a}}$ and $\hat{j}$ satisfy Condition~\ref{en:ass-three}, then $\rg\hat{j}=j(V_j(\form{a}))=j(V_j(\form{a})+\ker j)=\rg j$ and
$\hat{j}$ does satisfy Condition~\ref{en:ass-two}.
In particular, in the $j$-elliptic setting Proposition~\ref{prop:restrict-cond3} implies the instrumental decomposition $V=V_j(\form{a})\oplus\ker j$ that was originally obtained in~\cite[Theorem~2.5\,(ii)]{AtE12:sect-form}.
\end{remark}

Recall from Proposition~\ref{prop:gen-welldef} that $D_j(\form{a})\cap\ker j=\{0\}$ implies that $(\form{a},j)$ is associated with an accretive operator.
We next give an example where $T$ is invertible and $D_j(\form{a})\cap\ker j=\{0\}$, but $V_j(\form{a})\cap\ker j\ne\{0\}$.
This shows that $V_j(\form{a})\cap\ker j=\{0\}$ is not a necessary condition for $(\form{a},j)$ to be associated with an \m-accretive operator.
\begin{example}
Let $H$ and $H_1$ be Hilbert spaces such that $H_1\subsetneq H$ is dense and $\norm{u}_{H}\le\norm{u}_{H_1}$ for all $u\in H_1$. 
Denote the embedding of $H_1$ into $H$ by~$j_1$.
Let $V=H_1\times\CC$ and $j\colon V\to H$ be defined by
$j(u,\alpha)=j_1(u)$. Then $\rg j$ is dense in $H$ and $\ker j=\{(0,\alpha):\alpha\in\CC\}$. 
Moreover, it is easily observed that $j^*f=(j_1^*f,0)$ for all $f\in H$. Hence $\rg j^*=\rg j_1^*\times\{0\}$.

There exists an $x\in H_1$ such that $x\notin\rg j_1^*$ and $\norm{x}_{H_1}=1$. Define the form $\form{a}\colon V\times V\to\CC$ by
\[
    \form{a}((u,\alpha),(v,\beta)) \coloneqq\scalar{u}{v}_{H_1}+\scalar{\alpha x}{v}_{H_1}-\scalar{u}{\beta x}_{H_1}-\scalar{j_1(u)}{j_1(v)}_H.
\]
Clearly $\form{a}$ is accretive.
It is easily observed that $T$ is given 
by 
\[
    T(u,\alpha)=(u+\alpha x, -\scalar{u}{x}_{H_1}).
\]
A straightforward calculation shows that $T$ is invertible with
\[
    T^{-1}(v,\beta)=\bigl( v-(\scalar{v}{x}_{H_1}+\beta)x, \scalar{v}{x}_{H_1}+\beta\bigr).
\]
So Condition~\ref{en:ass-three} is valid.
Moreover,
\begin{align*}
    V_j(\form{a}) = V_j(\form{b}) &=\{(u,\alpha)\in V: -\scalar{u}{\beta x}_{H_1}=\form{b}((u,\alpha),(0,\beta))=0\text{ for all $\beta\in\CC$}\} \\
    &=\{(u,\alpha)\in V: \scalar{u}{x}_{H_1}=0\}
\end{align*}
and
\begin{align*}
    D_j(\form{a}) = T^{-1}\bigl(\rg j_1^*\times\{0\}\bigr) 
    = \left\{\bigl(w - \scalar{w}{x}_{H_1}x, \scalar{w}{x}_{H_1}\bigr): w\in\rg j_1^* \right\}.
\end{align*}
Thus $V_j(\form{a})\cap\ker j=\ker j\ne\{0\}$, but 
\[
    D_j(\form{a})\cap\ker j=\set{(0,\scalar{w}{x}_{H_1})}{w\in\rg j_1^*,\ w-\scalar{w}{x}_{H_1}x=0} = \{0\}.
\]
Here the last equality holds since the conditions $w-\scalar{w}{x}_{H_1}=0$ and $w\in\rg j_1^*$ imply $w\in\linspan\{x\}\cap\rg j_1^*=\{0\}$.
It follows from Theorem~\ref{thm:gen-complete} that $(\form{a},j)$ is associated with an \m-accretive operator~$A$. 

Lemma~\ref{lem:cond3} implies that $D_j(\form{a})$ is dense in $V_j(\form{a})$ and that $V_j(\form{a})+\ker j$ is not dense in~$V$.

We next determine the behaviour of the restriction of $\form{a}$ to $V_j(\form{a})$. Let $\form{\hat{a}}\coloneqq\restrict{\form{a}}{V_j(\form{a})\times V_j(\form{a})}$ and
$\hat{j}\coloneqq\restrict{j}{V_j(\form{a})}$ as in Proposition~\ref{prop:restrict-cond3}. Then for all $(u,\alpha), (v,\beta)\in V_j(\form{a})$ we have
\[
    \form{\hat{a}}((u,\alpha),(v,\beta))=\scalar{u}{v}_{H_1}-\scalar{j_1(u)}{j_1(v)}_H
\]
because $\scalar{u}{x}_{H_1}=\scalar{v}{x}_{H_1}=0$.
This shows that $\widehat{T}(u,\alpha)=(u,0)$ and that $\widehat{T}$ is not invertible.
Therefore $\form{\hat{a}}$ and $\hat{j}$ do not satisfy Condition~\ref{en:ass-three} whilst $\form{a}$ and $j$ do.
It is easily observed that $V_{\hat{j}}(\form{\hat{a}})=V_j(\form{a})$ and $V_{\hat{j}}(\form{\hat{a}})\cap\ker\hat{j}=\{0\}\times\CC\neq\{0\}$.
\end{example}

At the end of Section~\ref{sec:dual-form} we observed the following.
\begin{proposition}\label{prop:macc-cond3-adj}
Suppose $\form{a}$ and $j$ satisfy Conditions~\ref{en:ass-one}, \ref{en:ass-two} and~\ref{en:ass-three}. 
Suppose that $(\form{a},j)$ is associated with an accretive operator~$A$.
Then $A^*$ is \m-accretive and associated with $(\form{a}^*,j)$.
\end{proposition}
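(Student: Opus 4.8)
The plan is to exploit the equivalence established just before the proposition: Condition~\ref{en:ass-three} holds precisely when $T$ is invertible. Granting this, the pair $(\form{a},j)$ being associated with an accretive operator $A$ together with the invertibility of $T$ gives $\rg j^*\subset V=\rg T$, so Theorem~\ref{thm:gen-complete} at once upgrades $A$ to an \m-accretive operator. In particular $A$ is densely defined, and since $D(A)=j(D_j(\form{a}))$ this secures the density of $j(D_j(\form{a}))$ in~$H$---exactly the hypothesis needed to apply Corollary~\ref{cor:as-restrict} later on.

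Next I would handle the dual side. Since $(\form{a},j)$ is associated with an accretive operator, so is $(\form{a}^*,j)$ by Proposition~\ref{prop:as}\,\ref{en:as-welldef}; call the associated operator $A_1$. The operator representing $\form{b}^*$ in~$V$ is $T^*$, as the one-line computation $\form{b}^*(u,v)=\conj{\form{b}(v,u)}=\conj{\scalar{Tv}{u}_V}=\scalar{T^*u}{v}_V$ shows. Because $T$ is invertible, so is $T^*$, and hence $(\form{a}^*,j)$ satisfies Condition~\ref{en:ass-three} as well. Applying Theorem~\ref{thm:gen-complete} to $(\form{a}^*,j)$---now with $\rg j^*\subset V=\rg T^*$---shows that $A_1$ is \m-accretive.

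It remains to identify $A_1$ with $A^*$. By Corollary~\ref{cor:as-restrict}, whose density hypothesis was secured in the first paragraph, $A_1$ is a restriction of $A^*$, that is $A_1\subset A^*$. On the other hand $A^*$ is \m-accretive, in particular accretive, by Proposition~\ref{prop:macc}, while $A_1$, being \m-accretive, is maximal accretive by Proposition~\ref{prop:macc-maxacc}. Thus the accretive extension $A^*$ of $A_1$ must coincide with $A_1$, giving $A^*=A_1$; this yields simultaneously that $A^*$ is \m-accretive and that it is the operator associated with $(\form{a}^*,j)$. I anticipate no real obstacle, as the argument merely chains together earlier results; the only points deserving attention are verifying that the density hypothesis of Corollary~\ref{cor:as-restrict} is genuinely available (it is not assumed, but follows from $A$ being \m-accretive) and the routine identification of $T^*$ as the operator representing $\form{b}^*$.
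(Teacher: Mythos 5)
Your proof is correct and takes essentially the same route as the paper, whose own justification is the closing observation of Section~\ref{sec:dual-form}: invertibility of $T$ gives invertibility of $T^*$, hence the operator $A_1$ associated with $(\form{a}^*,j)$ is \m-accretive by Theorem~\ref{thm:gen-complete}, and then $A_1=A^*$ follows from Corollary~\ref{cor:as-restrict} together with the maximality of the \m-accretive operator~$A_1$. You merely spell out the details the paper leaves implicit, namely that $j(D_j(\form{a}))$ is dense because $A$ is itself \m-accretive (so Corollary~\ref{cor:as-restrict} is applicable) and that $T^*$ is indeed the operator representing the dual of~$\form{b}$.
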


Curiously, it is a consequence of this proposition that not every \m-accretive operator can be generated by
an embedded accretive form that satisfies Condition~\ref{en:ass-three}; see Remark~\ref{rem:maxsym-ass-three}. Originally this was observed in~\cite[Introduction and Theorem~4.2]{McIntosh70:bilinear}. 
We extend the latter result to a slightly more general setting that allows for non-injective maps~$j$.
\begin{corollary}\label{cor:max-sym-nosa}
Suppose $\form{a}$ and $j$ satisfy Conditions~\ref{en:ass-one}, \ref{en:ass-two} and~\ref{en:ass-three}. 
Suppose that $(\form{a},j)$ is associated with an accretive operator $A$ such that $iA$ is maximal symmetric.
If $V_j(\form{a})+\ker j=V$, then $iA$ is self-adjoint.
\end{corollary}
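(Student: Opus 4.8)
The plan is to argue in two stages: first to deduce from the maximal symmetry that $A$ is already \m-accretive, and then to use the hypothesis $V_j(\form{a})+\ker j=V$ to identify $A^*$ with $-A$. For the first stage I would note that, since $iA$ is maximal symmetric, $A$ is densely defined and closed. Hence Proposition~\ref{prop:macc-cond3-adj} applies and yields that $A^*$ is \m-accretive; in particular $I+A^*$ is injective, so $\rg(I+A)^\perp=\ker\bigl((I+A)^*\bigr)=\ker(I+A^*)=\{0\}$ and $\rg(I+A)$ is dense. As $A$ is closed and accretive, $\rg(I+A)$ is also closed by Lemma~\ref{lem:acc-clrg}, hence $\rg(I+A)=H$ and $A$ is \m-accretive. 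This stage does not yet use the hypothesis $V_j(\form{a})+\ker j=V$; that hypothesis enters only in the comparison with the adjoint.

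Next I would restrict to $V_j(\form{a})$. Put $\form{\hat{a}}\coloneqq\restrict{\form{a}}{V_j(\form{a})\times V_j(\form{a})}$ and $\hat{j}\coloneqq\restrict{j}{V_j(\form{a})}$. By Proposition~\ref{prop:restrict-cond3} the equality $V_j(\form{a})+\ker j=V$ makes $(\form{\hat{a}},\hat{j})$ satisfy Condition~\ref{en:ass-three}, and it also gives $\rg\hat{j}=j(V_j(\form{a}))=j(V)=\rg j$, so $\hat{j}$ satisfies Condition~\ref{en:ass-two}; thus $(\form{\hat{a}},\hat{j})$ satisfies Conditions~\ref{en:ass-one}--\ref{en:ass-three}. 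Since $A$ is \m-accretive and $D_j(\form{a})\subset V_j(\form{a})$, Corollary~\ref{cor:macc-restrict} shows that the operator associated with $(\form{\hat{a}},\hat{j})$ is again $A$. Applying Proposition~\ref{prop:macc-cond3-adj} to $(\form{\hat{a}},\hat{j})$ then gives that $A^*$ is the operator associated with $(\form{\hat{a}}^*,\hat{j})$, where $\form{\hat{a}}^*=\restrict{\form{a}^*}{V_j(\form{a})\times V_j(\form{a})}$.

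The decisive observation is that $\form{\hat{a}}^*=-\form{\hat{a}}$ on $V_j(\form{a})$. Indeed, for $u,v\in D_j(\form{a})$ the symmetry of $iA$ gives $\form{a}(u,v)=\scalar{Aj(u)}{j(v)}_H=-\scalar{j(u)}{Aj(v)}_H=-\conj{\form{a}(v,u)}=-\form{a}^*(u,v)$, and since $D_j(\form{a})$ is dense in $V_j(\form{a})$ by Lemma~\ref{lem:cond3} (Condition~\ref{en:ass-three} holds) while both forms are continuous, this identity extends to all of $V_j(\form{a})$. Hence $(\form{\hat{a}}^*,\hat{j})=(-\form{\hat{a}},\hat{j})$. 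As negating the form negates the associated operator while leaving its domain unchanged, the operator associated with $(-\form{\hat{a}},\hat{j})$ is $-A$; by the uniqueness of the associated operator in Proposition~\ref{prop:gen-welldef} we conclude $A^*=-A$, that is $(iA)^*=iA$, so $iA$ is self-adjoint.

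The step I expect to be the main obstacle is the reduction to $V_j(\form{a})$: on all of $V$ the form $-\form{a}$ need not be accretive, so $(\form{a}^*,j)$ and $(-\form{a},j)$ cannot be compared directly. Restricting to $V_j(\form{a})$ is what forces $\Re\form{\hat{a}}(u,u)=0$ there (a consequence of $\form{\hat{a}}^*=-\form{\hat{a}}$), which makes $-\form{\hat{a}}$ accretive and legitimises the identification; that the restriction still satisfies Conditions~\ref{en:ass-two} and~\ref{en:ass-three} and continues to generate $A$ is precisely what the hypothesis $V_j(\form{a})+\ker j=V$ secures through Proposition~\ref{prop:restrict-cond3}.
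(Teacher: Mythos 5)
Your proposal is correct and follows essentially the same route as the paper's proof: restrict to $V_j(\form{a})$ via Proposition~\ref{prop:restrict-cond3} and Corollary~\ref{cor:macc-restrict}, use the symmetry of $iA$ on $D_j(\form{a})$ together with the density from Lemma~\ref{lem:cond3} to get $\form{\hat{a}}^*=-\form{\hat{a}}$, and conclude $A^*=-A$ from Proposition~\ref{prop:macc-cond3-adj}. The only cosmetic differences are that the paper phrases the reduction as ``without loss of generality $j$ is injective'' (using Lemma~\ref{lem:cond3}\,\ref{en:c3:denseV}), and that your preliminary stage establishing \m-accretivity of $A$ can be short-circuited: under Condition~\ref{en:ass-three} the operator $T$ is invertible, so $\rg j^*\subset V=\rg T$ and Theorem~\ref{thm:gen-complete} gives \m-accretivity at once, without invoking maximal symmetry.
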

\begin{proof}
Suppose that $V_j(\form{a})+\ker j = V$. By Proposition~\ref{prop:restrict-cond3}, $\form{\hat{a}} \coloneqq \restrict{\form{a}}{V_j(\form{a})\times V_j(\form{a})}$ and
$\hat{j}\coloneqq \restrict{j}{V_j(\form{a})}$ still satisfy Condition~\ref{en:ass-three}. 
Moreover, $\hat{j}$ is injective by Lemma~\ref{lem:cond3}\,\ref{en:c3:denseV}.
By Corollary~\ref{cor:macc-restrict} the operator $A$ is associated with $(\form{\hat{a}},\hat{j})$.
So without loss of generality we can assume that $j$ is injective, whence $V_j(\form{a})=V$.

Note that for all $u,v\in D_j(\form{a})$ we have
\[
    \form{a}(u,v) = \scalar{Aj(u)}{j(v)}_H = -i\scalar{iA j(u)}{j(v)}_H = -\scalar{j(u)}{Aj(v)}_H = -\form{a}^*(u,v).
\]
As $D_j(\form{a})$ is dense in $V_j(\form{a})=V$ by Lemma~\ref{lem:cond3}\,\ref{en:c3:denseVa}, 
we obtain $\form{a}^*=-\form{a}$. 
By Proposition~\ref{prop:macc-cond3-adj} it follows that $A^*=-A$.
This shows that $iA$ is self-adjoint.
\end{proof}
\begin{remark}\label{rem:maxsym-ass-three}
We point out that the Condition $V_j(\form{a})+\ker j=V$ in Corollary~\ref{cor:max-sym-nosa} trivially holds if $j$ is injective.
Suppose that $A$ is an \m-accretive operator such that $iA$ is maximal symmetric but not self-adjoint.
Note that the operator $-A$ in Example~\ref{ex:deriv-Rplus} is an example of such an operator.
While by Corollary~\ref{cor:max-sym-nosa} the operator $A$ cannot be generated by an embedded
accretive form that satisfies Condition~\ref{en:ass-three},
we do not know whether it can be generated 
by a general non-embedded form that satisfies Condition~\ref{en:ass-three}. 
\end{remark}

The following is a positive result due to McIntosh, see~\cite[Section~3, Example~(c)]{McIntosh70:bilinear}.
\begin{proposition}
Let $A$ be \m-accretive such that $D(A^{1/2})=D(A^{*1/2})$. Then $A$ can be generated by an embedded accretive form that satisfies Condition~\ref{en:ass-three}.
\end{proposition}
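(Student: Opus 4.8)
The plan is to realise $A$ through its square root. Since $A$ is \m-accretive, both $A^{1/2}$ and $(A^*)^{1/2}=(A^{1/2})^*$ are well-defined \m-accretive operators, and by hypothesis $V\coloneqq D(A^{1/2})=D(A^{*1/2})$. I would equip $V$ with the graph inner product $\scalar{u}{v}_V=\scalar{A^{1/2}u}{A^{1/2}v}_H+\scalar{u}{v}_H$, so that $V$ is a Hilbert space, let $j\colon V\to H$ be the (injective) inclusion, and define
\[
    \form{a}(u,v)=\scalar{A^{1/2}u}{(A^*)^{1/2}v}_H .
\]

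First I would check Conditions~\ref{en:ass-one} and~\ref{en:ass-two}. Density of $\rg j$ is immediate because $V\supset D(A)$ and $A$ is densely defined. For continuity of $\form{a}$ the only point is to bound $\norm{(A^*)^{1/2}v}_H$ by $\norm{v}_V$, and this is exactly where $D(A^{1/2})=D(A^{*1/2})$ enters: the identity map from $V$ with the $A^{1/2}$-graph norm to $V$ with the $(A^*)^{1/2}$-graph norm is a bijection between Banach spaces whose graph is closed, hence bounded by the closed graph theorem. Accretivity then follows by density: for $u\in D(A)$ one has $\form{a}(u,u)=\scalar{A^{1/2}u}{(A^{1/2})^*u}_H=\scalar{Au}{u}_H$, whose real part is nonnegative, and since $D(A)$ is a core of $A^{1/2}$ it is dense in $V$, so $\Re\form{a}(u,u)\ge 0$ extends to all of $V$ by continuity.

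Next I would identify the associated operator. As $j$ is injective, $(\form{a},j)$ is associated with an accretive operator $\widehat{A}$ by Proposition~\ref{prop:gen-welldef}. For $u\in D(A)$ the computation above gives $\form{a}(u,v)=\scalar{Au}{j(v)}_H$ for all $v\in V$, so $u=j(u)\in D(\widehat{A})$ and $\widehat{A}u=Au$; thus $A\subset\widehat{A}$. Since $A$ is \m-accretive it is maximal accretive by Proposition~\ref{prop:macc-maxacc}, and $\widehat{A}$ is accretive, whence $\widehat{A}=A$.

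It remains to verify Condition~\ref{en:ass-three}, i.e.\ that $T\in\Linop(V)$ is invertible, and this is the main obstacle. Injectivity is free, since $\ker T=\ker T^*\subset\ker j=\{0\}$, so $T$ is injective with dense range and only a lower bound $\norm{Tu}_V\ge\mu\norm{u}_V$ is missing. Writing $H_1\coloneqq I+(A^{1/2})^*A^{1/2}$ (so that $\scalar{u}{v}_V=\scalar{H_1u}{v}_H$ and $V=D(H_1^{1/2})$), a short computation shows that for $u$ in the dense set $D(A)\cap D(H_1)$ the test vector $v\coloneqq(I+A^*)^{-1}H_1u\in V$ satisfies $\form{b}(u,v)=\scalar{(I+A)u}{(I+A^*)^{-1}H_1u}_H=\scalar{u}{H_1u}_H=\norm{u}_V^2$. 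Hence $\norm{Tu}_V\ge\norm{u}_V^2/\norm{v}_V$, and since $\norm{v}_V=\norm{L^*H_1^{1/2}u}_H\le\norm{L}\,\norm{u}_V$ with $L\coloneqq H_1^{1/2}(I+A)^{-1}H_1^{1/2}$, the whole of~\ref{en:ass-three} reduces to the single estimate $\norm{L}<\infty$. This is the analytic heart of the matter, and it is here that $D(A^{1/2})=D(A^{*1/2})$ must be used quantitatively: through the resulting norm equivalence $\norm{A^{1/2}w}_H\approx\norm{(A^*)^{1/2}w}_H$ on $V$ together with the resolvent (quadratic) estimates for the \m-accretive operator $A$ in the spirit of \McIntosh\ (the normal case already exhibits $L$ as the bounded multiplier $\lambda\mapsto(1+\abs{\lambda})/(1+\lambda)$ on the right half-plane). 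Once $\norm{L}<\infty$ is established, $T$ is bounded below, Condition~\ref{en:ass-three} holds, and the proposition follows.
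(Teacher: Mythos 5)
The paper itself offers no proof of this proposition: it is stated as a result of \McIntosh with a reference to \cite[Section~3, Example~(c)]{McIntosh70:bilinear}, and your construction is exactly the cited one, namely the form $\form{a}(u,v)=\scalar{A^{1/2}u}{(A^{1/2})^*v}_H$ on $V=D(A^{1/2})=D(A^{*1/2})$ with the graph inner product of $A^{1/2}$ and $j$ the inclusion. Your verification of Conditions~\ref{en:ass-one} and~\ref{en:ass-two} (closed graph theorem for the equivalence of the two graph norms, the core property of $D(A)$ for accretivity) and your identification of the associated operator with $A$ via maximality (Proposition~\ref{prop:macc-maxacc}) are correct. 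A point you use without justification, namely that $D(A)\cap D(H_1)$ is dense in $V$ for your $H_1=I+(A^{1/2})^*A^{1/2}$, is in fact automatic under the hypothesis: since $D((A^{1/2})^*)=D(A^{*1/2})=D(A^{1/2})$, one has $D(H_1)=\set{u\in D(A^{1/2})}{A^{1/2}u\in D(A^{1/2})}=D(A)$.

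The genuine gap is that Condition~\ref{en:ass-three} --- the entire substance of the proposition, since generation by an embedded accretive form alone is already contained in Example~\ref{ex:dd-acc-op} --- is never proved. You reduce it to the estimate $\norm{L}<\infty$ for $L=H_1^{1/2}(I+A)^{-1}H_1^{1/2}$ and then assert that this follows from the hypothesis ``quantitatively'', appealing to quadratic estimates ``in the spirit of \McIntosh'' and to the normal case; that is a declaration of intent rather than an argument, and checking the normal case says nothing about general \m-accretive operators, which is precisely where the hypothesis has to do its work. The gap can be closed, and in fact no quadratic estimates are needed. By standard results of Kato on fractional powers of \m-accretive operators, $D((I+A)^{1/2})=D(A^{1/2})$ with equivalent graph norms and $\bigl((I+A)^{1/2}\bigr)^*=(I+A^*)^{1/2}$. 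Now factor $L=\bigl(H_1^{1/2}(I+A)^{-1/2}\bigr)\bigl((I+A)^{-1/2}H_1^{1/2}\bigr)$. The first factor is everywhere defined and closed, hence bounded, because $(I+A)^{-1/2}$ maps $H$ into $D((I+A)^{1/2})=D(A^{1/2})=D(H_1^{1/2})$. For the second factor, defined on $V$, one has $\scalar{(I+A)^{-1/2}H_1^{1/2}u}{w}_H=\scalar{u}{H_1^{1/2}(I+A^*)^{-1/2}w}_H$ for all $u\in V$ and $w\in H$, and the operator $H_1^{1/2}(I+A^*)^{-1/2}$ is everywhere defined and closed, hence bounded, because $(I+A^*)^{-1/2}$ maps $H$ into $D((I+A^*)^{1/2})=D(A^{*1/2})=D(H_1^{1/2})$. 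This last step is the one and only place where the hypothesis $D(A^{1/2})=D(A^{*1/2})$ enters, and it enters through domain equality and the closed graph theorem, not through resolvent estimates. This yields $\norm{L}\le\norm{H_1^{1/2}(I+A)^{-1/2}}\,\norm{H_1^{1/2}(I+A^*)^{-1/2}}<\infty$, and with it your argument is complete; without it, what you have proved is only that $A$ is generated by an embedded accretive form, not the assertion about Condition~\ref{en:ass-three}.
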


We close this section with some references to other sufficient or equivalent conditions for Condition~\ref{en:ass-three}.
\McIntosh introduced Condition~\ref{en:ass-three} in~\cite{McIntosh1968:repres} as a closedness condition for densely defined, accretive forms.
In that way he generalised Kato's theory for sectorial forms. In~\cite{McIntosh70:bilinear} he formulated a more general abstract closedness condition for general densely defined, separated sesquilinear forms.
The connection between the latter abstract condition and Condition~\ref{en:ass-three} is explained by~\cite[Proposition~3.3 and Theorem~3.4]{McIntosh70:bilinear}.
The following reformulation that is adapted to our setting
gives a sufficient condition for~\ref{en:ass-three}.
For notation and underlying theory we refer to~\cite[Section~2]{McIntosh70:bilinear} or~\cite[Chapter~IV]{Schaefer71}.
\begin{lemma}
Suppose $V$ is a Hilbert space and $T\in\Linop(V)$ is accretive. Assume $\ker T=\{0\}$.
Let $X$ and $Y$ denote the vector space $V$ without topology. Define $\form{b}\colon X\times Y\to\CC$ by $\form{b}(x,y)=\scalar{Tx}{y}_V$.
Then $(X,Y,\form{b})$ is a separated dual pair. Denote by $X_\tau$ the space $X$ equipped with the
locally convex Mackey topology induced by this dual pair.
Then the operator $T$ is invertible if and only if the topology of $X_\tau$ is that of~$V$.
\end{lemma}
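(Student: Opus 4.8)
The plan is to reduce the statement to a concrete description of the linear functionals that the pairing $\form{b}$ produces on $X=V$, and then to invoke the Mackey--Arens theorem together with the fact that a Banach space carries the Mackey topology of its canonical duality.

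First I would verify that $(X,Y,\form{b})$ is a separated dual pair. Since $T$ is bounded and accretive, it is \m-accretive by Corollary~\ref{cor:bounded-macc}, so $\ker T^*=\ker T=\{0\}$ by Proposition~\ref{prop:macc}\,\ref{en:ker-macc}. Given $x\ne 0$ one has $Tx\ne 0$ and $\form{b}(x,Tx)=\norm{Tx}_V^2>0$; given $y\ne 0$ one has $T^*y\ne 0$ and $\form{b}(T^*y,y)=\scalar{TT^*y}{y}_V=\norm{T^*y}_V^2>0$. Hence $\form{b}$ separates the points of both $X$ and $Y$, so the dual pair is indeed separated.

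The crucial observation is that $\form{b}(x,y)=\scalar{Tx}{y}_V=\scalar{x}{T^*y}_V$, so via the Riesz identification of the norm-dual $V'$ with $V$ the functional $\form{b}(\cdot,y)$ on $X=V$ is exactly the one represented by $T^*y$. Writing $\Phi\colon Y\to V'$, $\Phi(y)=\form{b}(\cdot,y)$, injectivity of $\Phi$ follows from $\ker T^*=\{0\}$, and $\rg\Phi$ is the set of functionals represented by $\rg T^*$. Consequently the functionals coming from $Y$ exhaust $V'$ if and only if $\rg T^*=V$. Since $T^*$ is injective, the bounded inverse theorem shows that $T^*$ is surjective if and only if it is invertible, and this holds if and only if $T$ is invertible. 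Thus $\rg\Phi=V'$ is equivalent to the invertibility of $T$.

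Now I would combine this with two standard facts about the Mackey topology (see \cite[Chapter~IV]{Schaefer71}). By the Mackey--Arens theorem the continuous dual of $X$ equipped with $\tau(X,Y)$ is precisely $Y$, identified through $\Phi$ with $\rg T^*\subseteq V'$. If $\tau(X,Y)$ equals the norm topology of $V$, then this dual is the full norm-dual $V'$, which forces $\rg\Phi=V'$ and hence the invertibility of $T$. Conversely, if $T$ is invertible then $\Phi$ is a bijection of $Y$ onto $V'$, so the duality $(X,Y,\form{b})$ is, as a duality, the canonical one between the Banach space $V$ and its dual $V'$; since $V$ is barrelled it is a Mackey space, i.e.\ its norm topology coincides with $\tau(V,V')$, and transporting through $\Phi$ gives that $\tau(X,Y)$ is the norm topology of $V$. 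The main obstacle is the bookkeeping of the third paragraph: one must recognise that compatibility of a locally convex topology with the duality pins its continuous dual down to $\rg T^*$, so that the norm topology is admissible precisely when $\rg T^*=V$; once this is in place, the two directions follow routinely from Mackey--Arens and from the fact that Banach spaces are Mackey spaces.
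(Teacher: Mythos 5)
Your proof is correct. A point of comparison worth knowing: the paper does not actually prove this lemma at all --- it is stated as a reformulation of McIntosh's results, with a pointer to \cite[Proposition~3.3 and Theorem~3.4]{McIntosh70:bilinear} and to \cite[Chapter~IV]{Schaefer71} for the dual-pair machinery. So your argument is a genuinely self-contained alternative, and its mechanism is exactly the right one: identify the functionals $\form{b}(\cdot,y)$ with $\rg T^*$ via the Riesz representation; observe that, $T^*$ being injective (here your appeal to Corollary~\ref{cor:bounded-macc} and Proposition~\ref{prop:macc}\,\ref{en:ker-macc} to get $\ker T^*=\ker T=\{0\}$ is the clean way to do it, and it also settles separatedness of the pair), the equality $\rg T^*=V$ is equivalent to invertibility of $T^*$, hence of $T$, by the bounded inverse theorem; then conclude with Mackey--Arens in one direction and with the fact that a Banach space is barrelled, hence a Mackey space, in the other. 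Two small points you should make explicit in a final write-up. First, $\form{b}$ is sesquilinear rather than bilinear, so your map $\Phi\colon y\mapsto\form{b}(\cdot,y)$ is conjugate-linear; this is harmless, since $\rg\Phi$ is nevertheless a complex-linear subspace of $V'$ and $\Phi$ carries balanced, convex, $\sigma(Y,X)$-compact subsets of $Y$ onto balanced, convex, weak-$*$ compact subsets of $\rg\Phi$, so the topology of uniform convergence on the former agrees with that on the latter. Second, the converse direction implicitly uses that the Mackey topology $\tau(X,Y)$ depends only on the image of $Y$ inside the algebraic dual of $X$ together with the evaluation pairing; with the first point in place this identification of $\tau(X,Y)$ with $\tau(V,V')$ is standard, but it is the hinge of the argument and deserves a sentence.
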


For general densely defined, symmetric sesquilinear forms, McIntosh's closedness condition in~\cite{McIntosh70:bilinear} can be recast in terms of Krein spaces
as done in~\cite{Fleige99} and~\cite{FHdS00}. Moreover, there also exists a
recent formulation of these results that closely resembles Kato's classical representation theorems for the
semi-bounded case, see~\cite{GKMV2013}.
Such results can be utilised in our setting for embedded accretive forms $\form{a}$ such that $i \form{a}$ is symmetric.
We say that an accretive form $\form{a}$ is \emphdef{conservative} if $\Re \form{a}(u,u)=0$ for all $u\in V$.
The following is now a consequence of~\cite[Theorem~V.1.3]{Bog74}.
\begin{proposition}\label{prop:Krein-Tinv}
Suppose $\form{a}$ and $j$ satisfy Conditions~\ref{en:ass-one} and~\ref{en:ass-two}.
Moreover, suppose that $\form{a}$ is conservative. Let $X$ be the vector space $V$ without topology.
Then $T_0$ is invertible if and only if $(X,i \form{a})$ is a Krein space.
If $T_0$ is invertible, then $\form{a}$ and $j$ satisfy Condition~\ref{en:ass-three}.
\end{proposition}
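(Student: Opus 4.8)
The plan is to reduce both assertions to facts already at our disposal: the equivalence to already-known spectral behaviour of the representing operator via \cite[Theorem~V.1.3]{Bog74}, and the McIntosh condition via the perturbation result in Proposition~\ref{prop:pert-invertible}. First I would extract the algebraic meaning of conservativity. Since $\Re\form{a}(u,u)=0$ for all $u\in V$, the Hermitian form $\form{a}+\form{a}^*$ vanishes on the diagonal and hence identically by polarisation, so $\form{a}^*=-\form{a}$. In terms of the operator representing $\form{a}$ in $V$ this reads $T_0^*=-T_0$, i.e.\ $T_0$ is skew-adjoint, whence $G\coloneqq iT_0$ is a bounded self-adjoint operator on the Hilbert space $V$. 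Writing $[u,v]\coloneqq i\form{a}(u,v)=\scalar{Gu}{v}_V$, one sees that $i\form{a}$ is a symmetric sesquilinear form on $X$ and that $G$ is exactly its Gram operator with respect to the majorant inner product $\scalar{\cdot}{\cdot}_V$.

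With this identification in place I would invoke \cite[Theorem~V.1.3]{Bog74}, which characterises when the indefinite inner product space $(X,i\form{a})$ is a Krein space purely in terms of its Gram operator: this is the case if and only if $G$ has a bounded inverse on $V$, that is, $0\notin\sigma(G)$. Since $G=iT_0$ is boundedly invertible precisely when $T_0$ is, this immediately gives the claimed equivalence between invertibility of $T_0$ and $(X,i\form{a})$ being a Krein space. Concretely, if $T_0$ is invertible the spectral decomposition of $G$ splits $V$ into the positive and negative spectral subspaces, which are $[\cdot,\cdot]$-orthogonal and, thanks to $0$ being bounded away from $\sigma(G)$, are Hilbert spaces under $\pm[\cdot,\cdot]$; conversely, if $0\in\sigma(G)$ no such fundamental decomposition with complete positive and negative parts exists.

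For the second assertion, suppose $T_0$ is invertible. As $T_0\in\Linop(V)$ is accretive, it is \m-accretive by Corollary~\ref{cor:bounded-macc}, and $j^*j$ is a bounded positive operator, hence sectorial with vertex $0$ and semi-angle $0$. Proposition~\ref{prop:pert-invertible} then shows that $T=T_0+j^*j$ is invertible. Since invertibility of $T$ is equivalent to Condition~\ref{en:ass-three} (as recorded earlier in this section), it follows that $\form{a}$ and $j$ satisfy Condition~\ref{en:ass-three}.

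I expect the only genuine obstacle to be the faithful application of Bognár's theorem: one must match our data (the fixed Hilbert majorant $\scalar{\cdot}{\cdot}_V$ together with the $\scalar{\cdot}{\cdot}_V$-continuous symmetric form $i\form{a}$) to its hypotheses and interpret ``Krein space'' as the existence of a fundamental decomposition whose intrinsic Hilbert topology is comparable with that of $V$, so that invertibility of the Gram operator becomes the decisive criterion. The remaining steps are routine given the results already established in the paper.
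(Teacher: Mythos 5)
Your proposal is correct and follows essentially the same route as the paper: the paper offers no written-out proof, deriving the proposition directly from the citation of B\'ognar's Theorem~V.1.3 together with the observations at the start of Section~\ref{sec:mcintosh-cond} (Condition~\ref{en:ass-three} is equivalent to invertibility of $T$, and invertibility of $T_0$ yields invertibility of $T=T_0+j^*j$ via Proposition~\ref{prop:pert-invertible}). The details you supply --- conservativity forces $\form{a}^*=-\form{a}$ by polarisation, hence $T_0^*=-T_0$, so that $G=iT_0$ is the bounded self-adjoint Gram operator of the symmetric form $i\form{a}$, to which B\'ognar's criterion for being a Krein space applies --- are precisely the intended reading of that citation.
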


The above proposition can be used to establish that the form in Example~\ref{ex:signdiff} satisfies Condition~\ref{en:ass-three} provided $b\ne -a$.
For details see~\cite[Lemma~6]{Fleige99}.
Recently, based on~\cite{GKMV2013}, more general operators of the form $\operatorname{div}(C\nabla\cdot)$ with Dirichlet boundary conditions and indefinite Hermitian coefficient matrices $C$ have been investigated~\cite{Kos2012:ow-rep}. Again these results do not cover Example~\ref{ex:signdiff} in the case $b=-a$.
We do not know whether Condition~\ref{en:ass-three} is satisfied for the case $b=-a$.

\section{Ouhabaz type invariance criteria}\label{sec:invariance}

The following well-known result relates the invariance of closed, convex sets under a $C_0$-semigroup of contraction operators to properties of the generator.
For a proof, see for example~\cite[Theorem~2.2 and Proposition~2.3]{Ouh96}.
\begin{proposition}\label{prop:inv-S-op}
Let $A$ be an \m-accretive operator in~$H$. Denote by $S$ the $C_0$-semigroup generated by~$-A$. Let $C$ be a closed, convex subset of $H$,
and let $P$ be the associated orthogonal projection onto~$C$.
Then the following statements are equivalent.
\begin{enumerate}[\upshape (i)]
\labelformat{enumi}{\textup{(#1)}}
\item $S_tC\subset C$ for all $t>0$.
\item $\lambda (\lambda I + A)^{-1}C\subset C$ for all $\lambda>0$.
\item $\Re\scalar{Ax}{x-Px}\ge 0$ for all $x\in D(A)$.
\end{enumerate}
\end{proposition}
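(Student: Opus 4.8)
The plan is to prove the cycle of implications \textup{(iii)}$\Rightarrow$\textup{(ii)}$\Rightarrow$\textup{(i)}$\Rightarrow$\textup{(iii)}, which is convenient because it lets me avoid a direct \textup{(ii)}$\Rightarrow$\textup{(iii)} argument, where the awkward point is that $Px$ need not lie in $D(A)$. Throughout I would use that, as $A$ is \m-accretive, $-A$ generates the contraction $C_0$-semigroup $S$ by Phillips' theorem, that $(0,\infty)\subset\rho(-A)$, and that the resolvents $R_\lambda\coloneqq\lambda(\lambda I+A)^{-1}$ are everywhere-defined contractions on $H$. On the geometric side I would record the standard properties of the metric projection $P$ onto the nonempty closed convex set $C$: the variational inequality $\Re\scalar{z-Pz}{w-Pz}_H\le 0$ for all $z\in H$ and $w\in C$; that $Pz=z$ if and only if $z\in C$; and that $P$ is $1$-Lipschitz, hence continuous.

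For \textup{(iii)}$\Rightarrow$\textup{(ii)} I would fix $x\in C$ and $\lambda>0$ and set $y\coloneqq R_\lambda x\in D(A)$, so that $Ay=\lambda(x-y)$. Applying \textup{(iii)} to $y$ gives $0\le\Re\scalar{Ay}{y-Py}_H=\lambda\,\Re\scalar{x-y}{y-Py}_H$. Writing $x-y=(x-Py)-(y-Py)$ and using the variational inequality (with $w=x\in C$, base point $y$) in the form $\Re\scalar{x-Py}{y-Py}_H\le 0$, I obtain $0\le\lambda\bigl(\Re\scalar{x-Py}{y-Py}_H-\norm{y-Py}_H^2\bigr)\le-\lambda\norm{y-Py}_H^2$. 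Hence $y=Py\in C$, i.e.\ $R_\lambda C\subset C$. For \textup{(ii)}$\Rightarrow$\textup{(i)} I would invoke the exponential (Euler) formula $S_t x=\lim_{n\to\infty}R_{n/t}^{\,n}x$ for $x\in H$, $t>0$ (see~\cite{Kat1}): since \textup{(ii)} gives $R_{n/t}(C)\subset C$, also $R_{n/t}^{\,n}(C)\subset C$, and for $x\in C$ the iterates lie in the closed set $C$, so their limit $S_t x$ does too.

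The main obstacle is \textup{(i)}$\Rightarrow$\textup{(iii)}, where I must differentiate a distance at $t=0$ without controlling $t\mapsto PS_tx$ in $D(A)$. I would fix $x\in D(A)$, abbreviate $c_t\coloneqq PS_tx\in C$, and first show that $t\mapsto\operatorname{dist}(S_tx,C)$ is non-increasing: for $s,t\ge 0$ and $c'\in C$ one has $\norm{S_{t+s}x-S_sc'}_H=\norm{S_s(S_tx-c')}_H\le\norm{S_tx-c'}_H$ by the contractivity of $S_s$, and since $\{S_sc':c'\in C\}\subset C$ by \textup{(i)}, taking infima yields $\operatorname{dist}(S_{t+s}x,C)\le\operatorname{dist}(S_tx,C)$. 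Combining $\norm{S_tx-c_t}_H^2=\operatorname{dist}(S_tx,C)^2\le\operatorname{dist}(x,C)^2=\norm{x-Px}_H^2$ with $\norm{x-c_t}_H\ge\operatorname{dist}(x,C)=\norm{x-Px}_H$ gives $\norm{S_tx-c_t}_H^2\le\norm{x-c_t}_H^2$, that is, using $\norm{a}_H^2-\norm{b}_H^2=\Re\scalar{a-b}{a+b}_H$ with $a=S_tx-c_t$, $b=x-c_t$,
\[
    \Re\scalar{S_tx-x}{S_tx+x-2c_t}_H\le 0 .
\]

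To finish I would divide this inequality by $t>0$ and let $t\fallsto 0$. Here I use $\tfrac{1}{t}(S_tx-x)\to -Ax$ (valid since $x\in D(A)$), $c_t=PS_tx\to Px$ (continuity of $S$ at $0$ together with continuity of $P$), and continuity of the inner product, so that the left-hand side converges to $\Re\scalar{-Ax}{2(x-Px)}_H=-2\,\Re\scalar{Ax}{x-Px}_H$. As each difference quotient is $\le 0$, the limit is $\le 0$, giving $\Re\scalar{Ax}{x-Px}_H\ge 0$, which is \textup{(iii)}. The only delicate points to write out carefully are the monotonicity of the distance (which genuinely needs both $S_sC\subset C$ and the contractivity of $S_s$) and the passage to the limit in the difference quotient, for which replacing the non-differentiable $P S_t x$ by the continuous quantity $c_t$ inside a bounded bilinear expression is exactly what avoids differentiating the projection.
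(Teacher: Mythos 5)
Your proof is correct. Note, however, that the paper does not prove this proposition at all: it is quoted as a known result with a reference to Ouhabaz~\cite{Ouh96} (Theorem~2.2 and Proposition~2.3 there), so what you have produced is a self-contained proof of a cited fact rather than a rival to an argument in the paper. Compared with the standard (Ouhabaz-style) treatment, your cyclic scheme (iii)$\Rightarrow$(ii)$\Rightarrow$(i)$\Rightarrow$(iii) is organised differently: the usual proof obtains (i)$\Rightarrow$(ii) directly from the Laplace representation $\lambda(\lambda I+A)^{-1}x=\int_0^\infty \lambda e^{-\lambda t}S_tx\dx[t]$, viewing the resolvent as an average over a probability measure of points of $C$ and invoking closedness and convexity, while your cycle avoids vector-valued integration entirely; and for (i)$\Rightarrow$(iii) the common argument differentiates $t\mapsto \Re\scalar{x-S_tx}{x-Px}_H$ using that $S_tPx\in C$ together with the variational inequality $\Re\scalar{x-Px}{S_tPx-Px}_H\le 0$ and contractivity, whereas you instead project the orbit point $S_tx$, use monotonicity of $t\mapsto\operatorname{dist}(S_tx,C)$, and exploit the identity $\norm{a}^2-\norm{b}^2=\Re\scalar{a-b}{a+b}$ to avoid differentiating the projection. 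Both devices are sound; your version buys elementarity (only the exponential formula from \cite{Kat1} is quoted, no Bochner integrals), at the cost of not establishing the independently useful implication (i)$\Rightarrow$(ii) by a direct argument. The delicate points you flag --- that the distance monotonicity genuinely uses both $S_sC\subset C$ and contractivity, and that the limit $t\fallsto 0$ only needs strong convergence of both factors in a bounded sesquilinear expression --- are exactly the right ones, and your handling of them is correct.
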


If the negative generator of the $C_0$-semigroup is associated with a $j$-elliptic accretive form,
then one can conveniently characterise invariance by the form itself.
In this section we investigate to what extent the following result can be generalised to
our setting.

\begin{proposition}[Arendt, ter~Elst~{\cite[Proposition~2.9]{AtE12:sect-form}}]\label{prop:inv-jell}
Suppose $\form{a}$ is $j$-elliptic and accretive. Let $A$ be the \m-accretive operator associated with $(\form{a},j)$.
Denote by $S$ the $C_0$-semigroup generated by~$-A$. Let $C$ be a closed, convex subset of $H$,
and let $P$ be the associated orthogonal projection onto~$C$. 
Then the following statements are equivalent.

\begin{enumerate}[\upshape (i)]
\labelformat{enumi}{\textup{(#1)}}
\item $S_tC\subset C$ for all $t>0$.
\item\label{en:jell-form-inv} For all $u\in V$ there exists a $w\in V$ such that
\[
    P j(u) = j(w)\quad\text{and}\quad \Re \form{a}(u, u - w)\ge 0.
\]
\end{enumerate}
\end{proposition}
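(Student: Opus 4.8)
The plan is to route everything through the operator form of the invariance criterion. Since $A$ is \m-accretive, Proposition~\ref{prop:inv-S-op} shows that statement~(i) is equivalent to the inequality
\[
    \Re\scalar{Ax}{x-Px}_H\ge 0\quad\text{for all }x\in D(A),
\]
so it suffices to prove that this condition (call it Ouhabaz's inequality) is equivalent to~(ii). The bridge between the two is Proposition~\ref{prop:gen-welldef}\,\ref{en:part-ex:op}: every $x\in D(A)$ can be written as $x=j(u)$ with $u\in D_j(\form{a})$, and then $\scalar{Ax}{j(v)}_H=\form{a}(u,v)$ for all $v\in V$.

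First I would dispatch the implication from~(ii) to Ouhabaz's inequality, which is a direct computation. Let $x\in D(A)$ and pick $u\in D_j(\form{a})$ with $j(u)=x$. Applying~(ii) to this $u$ yields a $w\in V$ with $j(w)=Pj(u)=Px$ and $\Re\form{a}(u,u-w)\ge 0$. Using the bridge identity for both $v=u$ and $v=w$ gives
\[
    \Re\scalar{Ax}{x-Px}_H=\Re\bigl(\form{a}(u,u)-\form{a}(u,w)\bigr)=\Re\form{a}(u,u-w)\ge 0,
\]
so~(ii) implies~(i).

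For the reverse implication I would adapt Ouhabaz's classical argument to the map $j$. Fix $u\in V$ and put $g\coloneqq Pj(u)\in C$. For $\lambda>0$ set $x_\lambda\coloneqq\lambda(\lambda I+A)^{-1}j(u)$ and $y_\lambda\coloneqq\lambda(\lambda I+A)^{-1}g$. Since $\rg(\lambda I+A)^{-1}\subset D(A)=j(D_j(\form{a}))$, there are $u_\lambda,w_\lambda\in D_j(\form{a})$ with $j(u_\lambda)=x_\lambda$ and $j(w_\lambda)=y_\lambda$; moreover $x_\lambda\to j(u)$, and by the resolvent form of the invariance in Proposition~\ref{prop:inv-S-op} one has $y_\lambda\in C$ with $y_\lambda\to g=Pj(u)$ in $H$. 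The representative $w$ demanded in~(ii) would be produced as a weak $V$-limit of $(w_{\lambda_n})$ along a sequence $\lambda_n\to\infty$: as $j$ is weak-to-weak continuous and $j(w_{\lambda_n})\to Pj(u)$ strongly, any weak cluster point $w$ satisfies $j(w)=Pj(u)$. The accompanying inequality $\Re\form{a}(u,u-w)\ge 0$ would then follow by applying Ouhabaz's inequality to $x_\lambda\in D(A)$, inserting the variational characterisation $\Re\scalar{x_\lambda-Px_\lambda}{g-Px_\lambda}_H\le 0$ of the projection, and passing to the limit.

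The main obstacle is the uniform bound $\sup_\lambda\norm{w_\lambda}_V<\infty$ on which this weak-compactness step rests; equivalently, it is the assertion that $Pj(u)$ lies in $\rg j$, which is exactly the analogue of the condition $PV\subset V$ that Ouhabaz must establish in the embedded elliptic theory. For $u_\lambda$ the corresponding bound is routine: testing the resolvent equation against $u_\lambda-u$ gives $\form{a}(u_\lambda,u_\lambda-u)=-\lambda\norm{x_\lambda-j(u)}_H^2\le 0$, hence $\Re\form{a}(u_\lambda,u_\lambda)\le\abs{\form{a}(u_\lambda,u)}\le M\norm{u_\lambda}_V\norm{u}_V$, and $j$-ellipticity then forces $\sup_\lambda\norm{u_\lambda}_V<\infty$. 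For $w_\lambda$, however, no comparison element of $V$ mapping to $g=Pj(u)$ is available in advance — finding one is precisely the goal — so this test function is gone and the $V$-bound must instead be extracted from the invariance together with $j$-ellipticity. This is where the hypotheses genuinely interact and is the heart of the proof; once $Pj(u)\in\rg j$ is secured, the projection geometry and the limiting argument of the previous paragraph complete~(ii).
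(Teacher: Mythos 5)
Your reduction via Proposition~\ref{prop:inv-S-op} and the implication from~(ii) to Ouhabaz's inequality are correct: writing $x=j(u)$ with $u\in D_j(\form{a})$, taking $w$ from~(ii), and using $\scalar{Aj(u)}{j(v)}_H=\form{a}(u,v)$ gives $\Re\scalar{Ax}{x-Px}_H=\Re\form{a}(u,u-w)\ge 0$. (The paper itself offers no proof of this proposition --- it is quoted from \cite[Proposition~2.9]{AtE12:sect-form} --- so your argument has to stand on its own.) The converse direction, however, is left genuinely incomplete: you set up the approximants $y_\lambda=\lambda(\lambda I+A)^{-1}Pj(u)=j(w_\lambda)$ and the weak-compactness scheme correctly, but you then declare the uniform bound $\sup_\lambda\norm{w_\lambda}_V<\infty$ to be ``the heart of the proof'' and do not prove it. This is not a deferrable technicality: without that bound there is no weak cluster point, hence no $w$ and no proof that $Pj(u)\in\rg j$ at all; and Examples~\ref{ex:invar-bad-cond3} and~\ref{ex:deriv-H1} of the paper show that outside the $j$-elliptic setting this inclusion can genuinely fail, so the bound is exactly where the $j$-ellipticity hypothesis must enter. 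As written, the proposal proves only one of the two implications.

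Moreover, your diagnosis of the obstacle --- that the test-function trick is unavailable because no element of $V$ mapping to $g=Pj(u)$ is known in advance --- is mistaken; the correct comparison element is not a preimage of $g$ but the originally given $u$ itself. Since $Aj(w_\lambda)=\lambda(g-y_\lambda)$, testing against $w_\lambda-u$ gives
\[
    \form{a}(w_\lambda,w_\lambda-u)=\lambda\scalar{g-y_\lambda}{y_\lambda-j(u)}_H
    =-\lambda\norm{g-y_\lambda}_H^2+\lambda\scalar{g-y_\lambda}{g-j(u)}_H,
\]
and since invariance gives $y_\lambda\in C$, the variational inequality of the projection at $g=Pj(u)$, namely $\Re\scalar{j(u)-g}{y_\lambda-g}_H\le 0$, makes the last term nonpositive. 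Hence $\Re\form{a}(w_\lambda,w_\lambda)\le\Re\form{a}(w_\lambda,u)\le M\norm{w_\lambda}_V\norm{u}_V$, and $j$-ellipticity together with $\norm{y_\lambda}_H\le\norm{g}_H$ yields the missing uniform bound --- note that this is precisely the same computation you carried out for $u_\lambda$, with the projection inequality replacing the identity $Pj(u)=Pj(u)$ you thought you needed. Passing to a weak limit $w$ of a sequence $(w_{\lambda_n})$ then gives $j(w)=g$, and the limiting step is cleaner than your sketch via ``Ouhabaz's inequality at $x_\lambda$'': from $\Re\form{a}(w_{\lambda_n},w_{\lambda_n})\le\Re\form{a}(w_{\lambda_n},u)$, weak lower semicontinuity of the nonnegative quadratic form $v\mapsto\Re\form{a}(v,v)$ and weak continuity of $v\mapsto\form{a}(v,u)$ give $\Re\form{a}(w,u-w)\ge 0$, and adding $\Re\form{a}(u-w,u-w)\ge 0$ (accretivity) produces the stated inequality $\Re\form{a}(u,u-w)\ge 0$. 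This is Ouhabaz's classical argument transplanted to the $j$-setting, and it is exactly the step your proposal is missing.
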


We start with a straightforward translation of Proposition~\ref{prop:inv-S-op} to our setting,
thereby obtaining a result that somewhat resembles Proposition~\ref{prop:inv-jell}.
\begin{lemma}\label{lem:acc-bad-ouhabaz}
Assume $\form{a}$ and $j$ satisfy Conditions~\ref{en:ass-one} and~\ref{en:ass-two}. Assume that
$(\form{a},j)$ is associated with an \m-accretive operator~$A$.
Denote by $S$ the $C_0$-semigroup generated by~$-A$. Let $C$ be a closed, convex subset of $H$,
and let $P$ be the associated orthogonal projection onto~$C$. Then the following statements are equivalent.

\begin{enumerate}[\upshape (i)]
\labelformat{enumi}{\textup{(#1)}}
\item $S_tC\subset C$ for all $t>0$.
\item\label{en:ouhabaz-all} For all $u\in D_j(\form{a})$ and every sequence $(w_k)_{k\in\NN}$ in $V$ such that $\lim j(w_k)=P j(u)$ one has
\[
    \lim_{k\to\infty}\Re\form{a}(u,u-w_k)\ge 0.
\]
\item\label{en:ouhabaz-one} For all $u\in D_j(\form{a})$ there exists a sequence $(w_k)_{k\in\NN}$ in $D_j(\form{a})$ (or equivalently, in $V$) such that $\lim j(w_k)=P j(u)$ and
\[
    \limsup_{k\to\infty} \Re\form{a}(u,u-w_k)\ge 0.
\]
\end{enumerate}
\end{lemma}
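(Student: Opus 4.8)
The plan is to reduce all three statements to the Ouhabaz criterion in Proposition~\ref{prop:inv-S-op}, which asserts that (i) holds if and only if $\Re\scalar{Ax}{x-Px}_H\ge 0$ for all $x\in D(A)$. The bridge between this operator-theoretic condition and the form conditions~\ref{en:ouhabaz-all} and~\ref{en:ouhabaz-one} will be a single approximation identity, after which the equivalences fall out essentially for free.

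First I would establish that identity. Fix $u\in D_j(\form{a})$ and put $x\coloneqq j(u)\in D(A)$. Using the defining relation $\form{a}(u,v)=\scalar{Aj(u)}{j(v)}_H$ from Proposition~\ref{prop:gen-welldef}\,\ref{en:part-ex:op} (valid for all $v\in V$) with $v=u$ and $v=w_k$, together with the linearity of $j$, one obtains $\form{a}(u,u-w_k)=\scalar{Ax}{j(u)-j(w_k)}_H$ for every $w_k\in V$. If $(w_k)$ is any sequence in $V$ with $j(w_k)\to Pj(u)$ in $H$, then continuity of the inner product in its second argument gives
\[
    \lim_{k\to\infty}\Re\form{a}(u,u-w_k)=\Re\scalar{Ax}{x-Px}_H.
\]
The decisive point is that the right-hand side does not depend on the approximating sequence, so the limit on the left always exists and takes this one value. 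Producing such an approximation is exactly the main obstacle: unlike in the $j$-elliptic setting of Proposition~\ref{prop:inv-jell}, the vector $Pj(u)$ need not lie in $\rg j$, so one cannot in general choose a single $w\in V$ with $j(w)=Pj(u)$, and one must instead exploit the density of $\rg j$ (Condition~\ref{en:ass-two}). Since $A$ is \m-accretive, $D(A)=j(D_j(\form{a}))$ is dense in $H$, so the sequences can even be taken within $D_j(\form{a})$; the identity then shows the limiting value is the same for $(w_k)$ in $D_j(\form{a})$ or in $V$, which accounts for the parenthetical equivalence in~\ref{en:ouhabaz-one}.

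With the identity in hand the equivalences are immediate. Because $D(A)=j(D_j(\form{a}))$, the Ouhabaz condition reads $\Re\scalar{Aj(u)}{j(u)-Pj(u)}_H\ge 0$ for all $u\in D_j(\form{a})$. Statement~\ref{en:ouhabaz-all} demands that $\lim_{k}\Re\form{a}(u,u-w_k)\ge 0$ for \emph{every} admissible sequence; by the identity each such limit exists and equals $\Re\scalar{Ax}{x-Px}_H$, so~\ref{en:ouhabaz-all} is equivalent to the Ouhabaz condition. Statement~\ref{en:ouhabaz-one} only asks for \emph{one} admissible sequence with nonnegative $\limsup$; but the identity forces that $\limsup$ to coincide with the same fixed value $\Re\scalar{Ax}{x-Px}_H$, so~\ref{en:ouhabaz-one} is likewise equivalent to the Ouhabaz condition. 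Combining both with Proposition~\ref{prop:inv-S-op} yields the chain (i)$\Leftrightarrow$\ref{en:ouhabaz-all}$\Leftrightarrow$\ref{en:ouhabaz-one}, completing the proof.
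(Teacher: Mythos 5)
Your proposal is correct and follows essentially the same route as the paper: reduce (i) to the Ouhabaz criterion $\Re\scalar{Aj(u)}{j(u)-Pj(u)}_H\ge 0$ via Proposition~\ref{prop:inv-S-op}, use the identity $\form{a}(u,u-w_k)=\scalar{Aj(u)}{j(u)-j(w_k)}_H$ (valid for all $w_k\in V$) to see that the limit exists and is sequence-independent, and invoke the density of $D(A)=j(D_j(\form{a}))$, guaranteed by \m-accretivity, to produce approximating sequences inside $D_j(\form{a})$. The paper's proof is merely a compressed version of this same argument, leaving the key identity implicit.
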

\begin{proof}
By Proposition~\ref{prop:inv-S-op} it suffices to show that both~\ref{en:ouhabaz-all} and~\ref{en:ouhabaz-one} are equivalent to 
\begin{equation}\label{eq:ouhabaz}
   \Re\scalar{Aj(u)}{j(u)-Pj(u)}_H\ge 0
\end{equation}
for all $u\in D_j(\form{a})$.
So let $u\in D_j(\form{a})$, and observe that there exists a sequence $(w_k)$ in $D_j(\form{a})$ such
that $\lim_{k\to\infty}j(w_k)=Pj(u)$.
Hence~\eqref{eq:ouhabaz} is equivalent to $\lim_{k\to\infty}\Re\form{a}(u,u-w_k)\ge 0$
for one sequence (or all sequences) $(w_n)$ in $V$ such that $\lim_{k\to\infty}j(w_k)=Pj(u)$.
It is obvious that one may replace the limit by the limes superior.
\end{proof}
Note that Statement~\ref{en:jell-form-inv} in Proposition~\ref{prop:inv-jell} in particular asserts that $Pj(V)\subset j(V)$.
Lemma~\ref{lem:acc-bad-ouhabaz}, however, does not state such a kind of invariance.
The examples presented in the following show that the statements in Lemma~\ref{lem:acc-bad-ouhabaz} cannot be improved in this regard.

Assume that $(\form{a},j)$ is associated with an \m-accretive operator $A$,
and let $S$ be the $C_0$-semigroup on $H$ generated by~$-A$. 
The following example shows that even if Condition~\ref{en:ass-three} is satisfied and $C$ is a closed subspace of $H$ that is invariant under $S$, in general we do not have 
$Pj(V)\subset j(V)$, where $P$ is the orthogonal projection onto $C$ in~$H$. 
\begin{example}\label{ex:invar-bad-cond3}
Let $H_1$ be a Hilbert space and $R\ge I$ a self-adjoint operator in $H_1$ such that $D(R)\ne H_1$.
Let $V\coloneqq H_1\times D(R)$, $H\coloneqq H_1\times H_1$ and define $j\in\Linop(V,H)$ by $j(u_1,u_2)=(u_1,u_1+u_2)$.
Then $j$ is injective and has dense range.
We first determine $j^*\in\Linop(H,V)$. Let $(u_1,u_2)\in V$ and $(x,y)\in H$. Then
\begin{align*}
    \scalar[b]{(u_1,u_2)}{j^*(x,y)}_V &= \scalar[b]{j(u_1,u_2)}{(x,y)}_H 
    =\scalar{u_1}{x+y}_{H_1} + \scalar{u_2}{y}_{H_1} \\
    &= \scalar{u_1}{x+y}_{H_1} + \scalar{Ru_2}{RR^{-2}y}_{H_1}.
\end{align*}
This shows that $j^*(x,y)=(x+y,R^{-2}y)$ and $j^*j(u_1,u_2)=(2u_1+u_2,R^{-2}(u_1+u_2))$.

Define the sesquilinear form $\form{a}\colon V\times V\to\CC$ by
\[
    \form{a}(u,v)=\begingroup\renewcommand{\mid}{\biggm|}\scalar{\begin{pmatrix} 0 & -R\\ R^{-1} & R^{-1}\end{pmatrix}u}{v}_V\endgroup.
\]
Clearly, $\form{a}$ is continuous and accretive because
\[
    \form{a}(u,u)=-\scalar{Ru_2}{u_1}_{H_1} + \scalar{u_1+u_2}{Ru_2}_{H_1} = 2i\Im\scalar{u_1}{Ru_2}_{H_1} + \scalar{u_2}{Ru_2}_{H_1}.
\]
One easily sees that $T_0=\bigl(\begin{smallmatrix} 0 & -R\\R^{-1} & R^{-1}\end{smallmatrix}\bigr)$
is invertible. 
Therefore $T=T_0+j^*j$ is invertible by Proposition~\ref{prop:pert-invertible} and Condition~\ref{en:ass-three} is satisfied.
Hence $(\form{a},j)$ is associated with an \m-accretive operator.

Let $u\in V$ and $f\in H$ be such that $\form{a}(u,v)=\scalar{f}{j(v)}_H$ for all $v\in V$. That means
\[
    \scalar{-Ru_2}{v_1}_{H_1} + \scalar{u_1+u_2}{Rv_2}_{H_1} = \scalar{f_1+f_2}{v_1}_{H_1} + \scalar{f_2}{v_2}_{H_1}
\]
for all $v\in V$. This implies that $-Ru_2=f_1+f_2$ and $u_1+u_2=R^{-1}f_2$. Hence $u_1\in D(R)$ and $D_j(\form{a})=D(R)\times D(R)$.
We obtain $D(A)=D(R)\times D(R)$ and $A(w_1,w_2)=(Rw_1-2Rw_2, Rw_2)$.
A straightforward calculation shows that for all $\lambda>0$ we have
\[
    (\lambda I +A)^{-1} = (\lambda I+R)^{-1}\begin{pmatrix} I & 2R(\lambda I+R)^{-1} \\ 0 & I\end{pmatrix}.
\]
Obviously $C\coloneqq H_1\times\{0\}$ is an invariant subspace of $(\lambda I + A)^{-1}$ for all $\lambda>0$.
By Proposition~\ref{prop:inv-S-op} this shows that $C$ is an invariant subspace of the $C_0$-semigroup generated by~$-A$.

Denote by $P$ the orthogonal projection onto $C$ in~$H$. Let $u\in H_1\setminus D(R)$. Then 
\[
    Pj(u,0)=P(u,u) = (u,0),
\]
but $(u,0)\notin\rg j$. To prove this, assume that there exists a $(u_1,u_2)\in V$ such that $j(u_1,u_2)=(u,0)$. Then $u_1=u$ and $u_2=-u$,
which is a contradiction since $u\notin D(R)$ and $u_2\in D(R)$.

Note that in this example one has $PD(A)\subset D(A)$.
\end{example}

It is easy to give an example such that the associated $C_0$-semigroup leaves a closed, convex set invariant,
but such that the operator domain of the generator is not left invariant by the corresponding projection.
For example, the Laplacian in $\Ltwo[\RR]$ with domain $H^2(\RR)$ is \m-dissipative and it generates a positive $C_0$-semigroup. The corresponding form domain is the space $\Hone[\RR]$, which is
left invariant under the projection onto the positive real-valued cone. However, the latter projection does not leave the Laplacian's domain $H^2(\RR)$ invariant.

Still, one might hope that $PD(A)\subset j(V)$.
The following basic example shows that this is not true in general.
\begin{example}\label{ex:deriv-H1}
Let $H=\Ltwo[\RR]$, $V=\Hone[\RR]$ and let $j\colon V\to H$ be the inclusion. Define $\form{a}\colon V\times V\to\CC$ by 
\[
    \form{a}(u,v)=\int_\RR u'\conj{v}.
\]
Then $\Re \form{a}(u,u)=0$ for all $u\in V$, whence $\form{a}$ is accretive. Clearly $\form{a}$ and $j$ satisfy Condition~\ref{en:ass-one} and~\ref{en:ass-two},
$D_j(\form{a})=\Hone[\RR]=V$ and the associated operator $A$ is the derivative on~$\Hone[\RR]$.

Let $S$ be the $C_0$-semigroup generated by~$-A$. Then $(S_t u)(x)=u(x-t)$ for all $t>0$, $u\in\Ltwo[\RR]$ and a.e.~$x\in\RR$.
Let $C\coloneqq\Ltwo[0,\infty]=\set{u\in\Ltwo[\RR]}{\text{$u(x)=0$ for a.e.~$x\in(-\infty,0)$}}$. Then $C$ is a closed subspace that is invariant under~$S$.
The orthogonal projection $P$ from $H$ onto $C$ is given by $Pu=\mathds{1}_{(0,\infty)}u$. Obviously $PD(A)\not\subset j(V)$.

Note that Condition~\ref{en:ass-three} is not satisfied in this example, in contrast to Example~\ref{ex:invar-bad-cond3}. 
\end{example}

The preceding examples show that for a characterisation of whether the associated $C_0$-semi\-group leaves some closed subspace
invariant, neither the form domain nor the domain of the operator needs to be left invariant by the corresponding projection.
So it is not surprising that the statements in Lemma~\ref{lem:acc-bad-ouhabaz}
involve some approximation.
Moreover, the examples show that Lemma~\ref{lem:acc-bad-ouhabaz} is effectively the best 
adaptation of Proposition~\ref{prop:inv-jell} that can possibly hold in our setting.

\section{Remarks on the incomplete case}\label{sec:incomp}

In the present short section, let $H$ be a Hilbert space and $V_0$ be a semi-definite inner product space, i.e., $V_0$ is a vector space
equipped with a nonnegative symmetric sesquilinear form that makes it into a semi-normed space.
Let $j_0\colon V_0\to H$ be continuous with dense range
and $\form{a}_0\colon V_0\times V_0\to\CC$ be a continuous, accretive sesquilinear form.
We denote the Hausdorff completion of $V_0$ by $V$, see~\cite[Chapter II, \S3, Theorem~3]{Bou66:top1} for technical details.
Then there exist unique `extensions' of $\form{a}_0$ and $j_0$ to the Hausdorff completion $V$ which we denote by $\form{a}$ and $j$, respectively.
Note that $V$ is a Hilbert space and that $\form{a}$ and $j$ satisfy Conditions~\ref{en:ass-one} and~\ref{en:ass-two}.

We are interested in conditions on $\form{a}_0$ and $j_0$ which imply that $(\form{a},j)$ is associated with an 
\m-accretive operator. Clearly, one can express the conditions of Proposition~\ref{prop:gen-welldef} and Theorem~\ref{thm:gen-complete} in terms of Cauchy sequences introduced by
the completion.

A more easily verified sufficient condition is as follows. Assume that there exists a $\rho>0$ such that 
\begin{equation}\label{eq:incomp-cond}
\abs{\form{b}_0(u,u)}\ge\rho\norm{u}_{V_0}^2
\end{equation}
for all $u\in V_0$.
This implies that the `extensions' $\form{a}$ and $j$ satisfy Condition~\ref{en:ass-three} and that $\form{b}(u,u)=0$ implies $u=0$.
Therefore $(\form{a},j)$ is associated with an \m-accretive operator.

Condition~\eqref{eq:incomp-cond} suffices to cover the incomplete $j$-sectorial case as considered in~\cite[Section~3]{AtE12:sect-form}.
It is obvious that~\eqref{eq:incomp-cond} is not necessary to ensure that $(\form{a},j)$ is associated with an \m-accretive operator,
cf.~Example~\ref{ex:gen-inverse}.
Moreover, it only allows for a minor extension of the $j$-sectorial case.
In fact, suppose that Condition~\eqref{eq:incomp-cond} is satisfied. Then $\abs{\scalar{Tu}{u}}\ge\rho\norm{u}_V^2$ for all $u\in V$, where $T$ is defined as in~\eqref{eq:def-T} with respect to $\form{a}$ and~$j$. 
As the numerical range of $T$ is convex by the Toeplitz--Hausdorff theorem, it is easily deduced that $T$ can be made sectorial after a small rotation, i.e., there exists a $\varphi\in\RR$ such that $e^{i\varphi}T$ is sectorial. 
Therefore Condition~\eqref{eq:incomp-cond} can be considered as being too restrictive for the general accretive setting.

\subsection*{Acknowledgements} 
We would like to thank Alan \McIntosh for providing us with a copy of his PhD dissertation.
Part of this work is supported by the Marsden Fund Council from Government
funding, administered by the Royal Society of New Zealand.

\providecommand{\noopsort}[1]{}\def\cprime{$'$}
  \providecommand{\McIntosh}{McIntosh}
\providecommand{\bysame}{\leavevmode\hbox to3em{\hrulefill}\thinspace}
\providecommand{\MR}{\relax\ifhmode\unskip\space\fi MR }
% \MRhref is called by the amsart/book/proc definition of \MR.
\providecommand{\MRhref}[2]{%
  \href{http://www.ams.org/mathscinet-getitem?mr=#1}{#2}
}
\providecommand{\href}[2]{#2}

\setlength{\parindent}{0pt}
\end{document}